\documentclass[12pt,a4paper]{amsart}

\usepackage{amsmath,amscd,amssymb}
\usepackage[utf8]{inputenc}
\usepackage{times,mathptmx}
\usepackage{tikz}
\usepackage{latexsym}
\usepackage{color}
\usepackage{cite}
\usepackage{hyperref}
\usepackage[initials, bibtex-style]{amsrefs}

\makeatletter

\errorcontextlines=3
\setlength{\overfullrule}{3pt}
\allowdisplaybreaks[1]

\IfFileExists{graphs/alpha0-1.dat}{%
  \newcommand{\graphpath}[1]{graphs/##1}
}{%
  \newcommand{\graphpath}[1]{##1}%
  \IfFileExists{graphs.tex}{%


  }{}
}

\definecolor{verydarkgreen}{rgb}{0.4,0.47,0.42}
\definecolor{darkblue}{rgb}{0.23,0.33,0.95}
\definecolor{darkred}{rgb}{0.67,0.03,0.06}
\definecolor{darkgreen}{RGB}{55,160,80}
\definecolor{darkorange}{RGB}{230,200,30}
\definecolor{lambda31}{RGB}{30,230,220}

\newtheorem{theorem}{Theorem}
\newtheorem{lemma}[theorem]{Lemma}

\newtheorem{proposition}[theorem]{Proposition}

\theoremstyle{remark}
\newtheorem{remark}[theorem]{Remark}

\newcommand{\IR}{{\mathbb R}}
\newcommand{\IC}{{\mathbb C}}
\newcommand{\IN}{\mathbb N}
\newcommand{\IZ}{\mathbb Z}

\newcommand{\IS}{{\mathbb S}}
\newcommand{\C}{{\mathcal C}}

\newcommand{\Y}{{\mathcal Y}}
\newcommand{\intd}{\,\mathrm{d}}
\DeclareMathOperator{\e}{e}
\renewcommand{\i}{{\mathbf{i}}}
\newcommand{\abs}[1]{\mathopen|#1\mathclose|}

\newcommand{\intervaloo}[1]{\mathopen]#1\mathclose[}
\newcommand{\intervalco}[1]{\mathopen[#1\mathclose[}
\newcommand{\intervaloc}[1]{\mathopen]#1\mathclose]}
\newcommand{\intervalcc}[1]{\mathopen[#1\mathclose]}

\@ifundefined{leqslant}{}{\let\le=\leqslant  \let\leq=\leqslant}
\@ifundefined{geqslant}{}{\let\ge=\geqslant  \let\geq=\geqslant}

\DeclareMathOperator{\sign}{sign}
\let\phi=\varphi

\definecolor{fixme}{RGB}{224,15,182}
\definecolor{Chris}{RGB}{180,180,50}

\@ifundefined{hypersetup}{}{%
  \hypersetup{%
    pdfauthor={C. De Coster \& S. Nicaise \& C. Troestler},
    pdftitle={Buckling spectrum},
    colorlinks=true,
    urlcolor=verydarkgreen,
    citecolor=darkblue,
    linkcolor=darkred,
    pdfstartview=FitH,
  }}

\makeatother

\begin{document}

\title{Spectral analysis of a generalized buckling problem on a ball}

\author{Colette De Coster}
\address{%
  Universit\'e de Valenciennes et du Hainaut Cambr\'esis,
  LAMAV,  FR CNRS 2956,
  Institut des Scien\-ces et Techniques de Valenciennes,
  F-59313  Valenciennes Cedex 9, France
}
\email{Colette.DeCoster@univ-valenciennes.fr}

\author{Serge Nicaise}
\address{%
  Universit\'e de Valenciennes et du Hainaut Cambr\'esis,
  LAMAV,  FR CNRS 2956,
  Institut des Scien\-ces et Techniques de Valenciennes,
  F-59313  Valenciennes Cedex 9, France
}
\email{Serge.Nicaise@univ-valenciennes.fr}

\author{Christophe Troestler}
\thanks{%
    C.~Troestler was partially supported by
    the the project ``Existence and asymptotic behavior of solutions
    to systems of semilinear elliptic partial differential equations''
    (T.1110.14)
    of the F.R.S.-FNRS, \emph{Fonds de la Recherche Fondamentale Collective},
    Belgium. }
\address{%
  D\'epartement de Math\'ematique,
  Universit\'e de Mons,
  Place du parc~20,
  B-7000 Mons, Belgium
}
\email{Christophe.Troestler@umons.ac.be}

\maketitle

\begin{abstract}
  In this paper, the spectrum of the following fourth order problem
  \begin{equation*}
    \begin{cases}
      \Delta^2 u+\nu u=-\lambda \Delta u &\text{in } D_1,\\
      u=\partial_r u= 0 &\text{on } \partial D_1,
    \end{cases}
  \end{equation*}
  where $D_1$ is the unit ball in $\IR^N$, is determined for
  $\nu < 0$ as well as the nodal properties of the corresponding eigenfunctions.
  In particular, we show that the first eigenvalue is simple and that
  the corresponding eigenfunction is radial and
  (up to a multiplicative factor) positive and decreasing with respect to the
  radius.
  This completes earlier results obtained for $\nu \ge 0$
  (see~\cite{DNT1}) and for $\nu <0$
  (see~\cite{LW:13}).
\end{abstract}

\section{Introduction}

In this paper we look for any pairs of real numbers $(\nu,\lambda)$
such that a nontrivial solution $u$ of
\begin{equation}
  \label{eq:pbmgeneral}
  \begin{cases}
    \Delta^2 u+\nu u=-\lambda \Delta u &\text{in } D_1,\\
    u=\partial_r u= 0 &\text{on } \partial D_1,
  \end{cases}
\end{equation}
exists, where $D_1$ is the unit ball of~$\IR^N, N\geq 2$.  This problem
can be viewed as an eigenvalue problem in $\lambda$, once $\nu$ is
fixed, or as an eigenvalue problem in $\nu$, once $\lambda$
is fixed; both cases being points of view on finding
pairs $(\nu,\lambda)$ in the
plane.

There are few references concerning the general problem
\eqref{eq:pbmgeneral} and they concern either the eigenvalue problem
in $\lambda$ ($\ge 0$) for $\nu\geq0$ fixed or the eigenvalue problem
in $\nu$ ($\le 0$)
for $\lambda\le 0$ fixed i.e., only the situation in the first or the
third quadrant of the plane is considered (see in particular
\cites{dG79, Pa56, KLV} and the references therein). These papers are
mainly concerned with the behaviour of functions
$\lambda: \IR^+\to\IR: \nu\mapsto \lambda(\nu)$
(resp. $\nu: \IR^+\to\IR: \lambda\mapsto -\nu(-\lambda)$).
In particular, in \cite{KLV}, the authors prove that the first
eigenvalue $\lambda_1 : \IR^+ \to \IR^+ : \nu
\mapsto \lambda_1(\nu)$, as a function of $\nu$, is strictly concave, increasing, and satisfies
\begin{equation*}
  \forall \nu \in \intervaloo{0, +\infty},\qquad
  \max\bigl\{\lambda_1(0),2\sqrt{\nu} \bigr\}
  < \lambda_1(\nu)
  < \lambda_1(0) + \frac{\nu}{\xi_1}
\end{equation*}
with $\xi_1$ being the first eigenvalue of the Laplacien in $H^1_0(D_1)$ with
corresponding eigenfunction
$\psi_1$.  Similar results are obtained for
$-\nu_1(-\lambda)$. Moreover, they prove that
$-\nu_1(-\lambda)/\lambda\to \xi_1$ as $\lambda$ tends to zero and
that the corresponding eigenfunction converges to $\psi_1$.

More recently, we find in the literature results on the structure of
the eigenfunctions and in particular the positivity of the first
eigenfunction.
There is a vast literature on this last question about the case
$\nu=0$ or $\lambda=0$, see \cites{BDJM, Coffman, CD, CDS, Duffin, GGS,
  GS, GS2, KKM, Sweers, Wieners} which corresponds to the
situations on the axes.

In \cite{LW:13}, the authors consider the problem
\begin{equation}
  \label{eq:pbmgeneral*}
  \begin{cases}
    \Delta^2 u-\tau \Delta u=\omega u  &\text{in } D_1,\\
    u=\partial_r u= 0 &\text{on } \partial D_1,
  \end{cases}
\end{equation}
where the term $ \Delta^2 u$ accounts for the bending, while the term
$-\tau \Delta u$, with $\tau>0$ for stretching; $\omega$ being the
(positive) eigenvalues they are looking for. Observe that reversing
the point of view, problem \eqref{eq:pbmgeneral*} is related to
\eqref{eq:pbmgeneral} with $\nu=-\omega$ and $\lambda=-\tau$.  In
particular, for $\tau\geq 0$, the authors of \cite{LW:13} prove the
existence of $\omega_1$ such that the problem \eqref{eq:pbmgeneral*}
has a positive radially symmetric eigenfunction and that $\omega_1$ is the only
such eigenvalue.  However, the authors observe that
there could exist
non-radially symmetric positive eigenfunctions i.e.,  $\omega_1$ may not be the only eigenvalue with positive eigenfunctions nor the smallest eigenvalue.
Our result proves that, in fact,  it is not the case.
We can also refer to
\cites{Chasman-these,Chasman} where other boundary conditions
are considered.

In \cite{DNT1}, motivated by the study of clamped thin elastic
membranes supported on a fluid substrate, we considered the case
$\nu\geq 0$ and, in particular, we gave a complete description of the
 smallest eigenvalue $\lambda_1$ and its eigenfunction
$u$. In this work, we want to extend the analysis started in
\cite{DNT1} to any $\nu\in \IR$. More precisely, we determine
pairs $(\nu,\lambda)$ with $\nu<0$ such that
\eqref{eq:pbmgeneral} has a nontrivial solution $u$ as well as the
shape of the corresponding solution $u$.  In particular, we obtain
precise information about the smallest eigenvalue $\lambda_1(\nu)$
and its associated eigenfunction as a function of $\nu$.  Putting
together the results of this paper with those of \cite{DNT1}, we
obtain the following theorem concerning the
case where $D_1$ is the unit ball of $\IR^2$.
In this result and throughout the paper, $(j_{k,\ell})_{\ell\ge 1}$
denote the roots of $J_k$, the Bessel function of the first kind of
order $k$.

\begin{theorem}
  \label{lambda1nu}
  If $D$ is the unit ball of $\IR^2$,
  the first eigenvalue $\lambda_1 : \IR \to \IR :
  \nu \mapsto \lambda_1(\nu)$  of~\eqref{eq:pbmgeneral}
  is a continuous
  increasing function of $\nu$ such that
  \begin{equation*}
    \lim_{\nu\to\pm\infty}\lambda_1(\nu)=\pm \infty
    \quad\text{and}\quad
    \lambda_1(0)=j_{1,1}^2.
  \end{equation*}
  Hence it is a bijection from $\IR$ into itself.
  Moreover,
  \begin{itemize}
  \item If $\nu\in \intervaloo{-\infty,(j_{0,1}j_{0,2})^2}$, the first
    eigenvalue is simple and the eigenfunctions $\varphi_1$ are
    radial, one-signed and $\abs{\varphi_1}$ is decreasing with
    respect to the radius $r$.

  \item If
    $\nu\in \intervaloo{(j_{1,n}j_{1,n+1})^2, \,(j_{0,n+1}j_{0,n+2})^2}$,
    for some $n\ge 1$, the first eigenvalue is simple and the
    eigenfunctions are radial and have $n+1$ nodal regions.

  \item If
    $\nu \in \intervaloo{(j_{0,n+1}j_{0,n+2})^2,
      \,(j_{1,n+1}j_{1,n+2})^2}$,
    for some $n\ge 0$, the eigenfunctions $\varphi_1$ have the form
    \begin{equation*}
      R_{1,1}(r) (c_1\cos\theta + c_2\sin\theta),
      \qquad
      c_1, c_2 \in \IR.
    \end{equation*}
    Moreover the function $R_{1,1}$ has $n$ simple zeros
    in $\intervaloo{0,1}$,
    i.e., $\varphi_1$ has $2(n+1)$ nodal regions.
  \end{itemize}
\end{theorem}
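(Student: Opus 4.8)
The plan is to separate the soft, global properties of $\lambda_1$ (continuity, monotonicity, asymptotics, bijectivity, value at~$0$), which I would obtain by a single Rayleigh quotient argument valid for all $\nu\in\IR$, from the fine nodal description, which I would assemble by gluing the present paper's results for $\nu<0$ to those of~\cite{DNT1} for $\nu\ge0$.

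First I would record the variational characterization
\begin{equation*}
  \lambda_1(\nu)
  = \min_{u\ne 0}
    \frac{\int_{D_1}\abs{\Delta u}^2\intd x + \nu\int_{D_1} u^2\intd x}
         {\int_{D_1}\abs{\nabla u}^2\intd x},
\end{equation*}
the minimum over $u$ in the clamped space being attained because the form is semibounded and the embedding into $H^1_0(D_1)$ is compact. For each fixed $u$ the quotient is an affine function of $\nu$ with positive slope $\int u^2/\int\abs{\nabla u}^2$, so its pointwise infimum $\lambda_1$ is concave and nondecreasing; evaluating at a minimizer for the larger parameter shows it is \emph{strictly} increasing, and concavity gives continuity. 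For the limits, $\int\abs{\nabla u}^2=-\int u\,\Delta u\le(\int u^2)^{1/2}(\int\abs{\Delta u}^2)^{1/2}$, so by the arithmetic--geometric mean inequality the numerator is $\ge 2\sqrt{\nu}\int\abs{\nabla u}^2$ when $\nu\ge0$, whence $\lambda_1(\nu)\ge2\sqrt{\nu}\to+\infty$; testing with a fixed function gives $\lambda_1(\nu)\to-\infty$ as $\nu\to-\infty$. Strict monotonicity, continuity and these two limits immediately yield that $\lambda_1$ is a bijection of $\IR$ onto itself.

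Next, specializing to $N=2$, I would Fourier-decompose in the angular variable, splitting~\eqref{eq:pbmgeneral} into radial problems indexed by the integer frequency $k\ge0$ with eigenvalues $\lambda_{k,\ell}(\nu)$ ($\ell\ge1$, multiplicity two for $k\ge1$), so that $\lambda_1(\nu)=\min_{k,\ell}\lambda_{k,\ell}(\nu)$. At $\nu=0$ the regular radial solutions are spanned by $J_k(\sqrt{\lambda}\,r)$ and $r^k$, and the clamped conditions force $J_{k+1}(\sqrt{\lambda})=0$; the smallest value is $j_{1,1}^2$, attained at $k=0$, giving $\lambda_1(0)=j_{1,1}^2$ and placing $\nu=0$ inside the first regime since $0<(j_{0,1}j_{0,2})^2$. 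For $\nu<0$ the roots of $\mu^4-\lambda\mu^2+\nu$ give one positive and one negative square, so the radial solutions combine $J_k(ar)$ with $I_k(br)$, with $a^2-b^2=\lambda$ and $a^2b^2=-\nu$; the present paper's analysis of this regime provides simplicity of $\lambda_1(\nu)$ and a radial, one-signed, radially decreasing eigenfunction. This matches the $\nu\ge0$ description of~\cite{DNT1} on $\intervalco{0,(j_{0,1}j_{0,2})^2}$, and continuity of $\lambda_1$ glues the two halves; the remaining regimes for $\nu\ge0$ are exactly those of~\cite{DNT1}.

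The hard part will be the precise location of the regime boundaries and nodal counts, i.e.\ showing that the lower envelope $\min_{k,\ell}\lambda_{k,\ell}$ is realized only by the modes $k=0$ and $k=1$ and switches between the families $(0,\ell)$ and $(1,\ell)$ exactly at $(j_{0,\ell}j_{0,\ell+1})^2$ and $(j_{1,\ell}j_{1,\ell+1})^2$. The computational fact singling out these products is that at $\nu=(j_{k,\ell}j_{k,\ell+1})^2$ (a positive value, so both characteristic squares are positive) one may take $a=j_{k,\ell+1}$, $c=j_{k,\ell}$, making both $J_k(ar)$ and $J_k(cr)$ vanish on $\partial D_1$; the two-Bessel combination is then an eigenfunction with $\lambda=j_{k,\ell}^2+j_{k,\ell+1}^2$, whose number of interior zeros pins its position on the branch $\lambda_{k,\cdot}$. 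Converting these calibration points into a statement about the \emph{lowest} branch requires the monotonicity of each $\lambda_{k,\ell}(\cdot)$, the strict ordering of branches across different $k$ (in particular ruling out any contribution of $k\ge2$ to $\lambda_1$), and a Prüfer-type nodal count for the radial eigenfunctions. This is the technical heart carried out in the body of the paper and in~\cite{DNT1}, and it is where essentially all the effort lies.
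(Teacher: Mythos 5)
Your proposal is correct, and for everything that is genuinely hard it leans on exactly the sources the paper's own proof invokes: Theorems~\ref{thm:lambda1} and~\ref{R01} of this paper for the regime $\nu<0$, and the results of \cite{DNT1} (Theorems~4.17, 4.18, 5.5 and Proposition~5.11) for the regimes with $\nu\ge 0$, including the switching of the lowest branch between the $k=0$ and $k=1$ families at $(j_{0,n+1}j_{0,n+2})^2$ and $(j_{1,n}j_{1,n+1})^2$ and the exclusion of $k\ge 2$. (One small inaccuracy: that envelope analysis lives entirely in \cite{DNT1}, since for $\nu<0$ the minimum is always $\lambda_{0,1}$ by Theorem~\ref{thm:lambda1}; the body of this paper contributes nothing to the switching question, which only arises for $\nu>0$.) Where you genuinely depart from the paper is in the soft global properties of $\lambda_1$. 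The paper obtains continuity, strict monotonicity, the limits and bijectivity by gluing two one-sided statements: for $\nu<0$ they come out of the Bessel machinery (Lemmas~\ref{alpha decrease} and~\ref{kappa->0}, packaged into Theorem~\ref{thm:lambda1}), for $\nu\ge 0$ they are quoted from \cite[Theorem~4.3, Lemmas~4.4, 4.5 and 4.6]{DNT1}, and continuity across $\nu=0$ holds because both one-sided limits equal $j_{1,1}^2$. You instead run a single Rayleigh-quotient argument on the clamped space, valid for all $\nu\in\IR$: writing $Q_\nu(u)$ for the quotient, each map $\nu\mapsto Q_\nu(u)$ is affine with positive slope, so the infimum is concave (hence continuous) and, by testing at a minimizer for the larger parameter, strictly increasing; the bound $\lambda_1(\nu)\ge 2\sqrt{\nu}$ (Cauchy--Schwarz plus AM--GM) and a fixed test function give the two limits, and your determinant computation $J_{k+1}(\sqrt{\lambda})=0$ at $\nu=0$ correctly yields $\lambda_1(0)=j_{1,1}^2$. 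This variational route is more elementary and self-contained for those properties---it needs neither the matching at $\nu=0$ nor the monotonicity of $\kappa\mapsto\alpha_{k,\ell}(\kappa)$---whereas the paper's route produces them as by-products of machinery it needs anyway and yields, in addition, differentiability of the eigenvalue branches.
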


Information on the eigenspaces at the countably many $\nu > 0$ not
considered in the previous theorem is also provided in \cite{DNT1}.
For these $\nu$, the eigenspaces have even larger dimensions (see
\cite[Theorem~4.18]{DNT1}).

For $\nu<0$, we can also give a  characterization of higher eigenvalues.
In particular, the nodal properties of their eigenfunctions are
completely determined.

\begin{theorem}
  \label{thm:lambda_kl}
  If $D$ is the unit ball of $\IR^2$,
  there exist increasing differentiable functions
  $\lambda_{k,\ell} : \intervaloo{-\infty, 0} \to \IR : \nu \mapsto
  \lambda_{k,\ell}(\nu)$
  for $k \in \IN$ and $\ell \in \IN^*$ such that the spectrum
  of~\eqref{eq:pbmgeneral} for a given
  $\nu \in \intervaloo{-\infty, 0}$ is exactly
  $\bigl\{ \lambda_{k,\ell}(\nu) \bigm| k \in \IN,\ \ell \in \IN^*
  \bigr\}$.  Moreover
  \begin{equation*}
    \lim_{\nu \to -\infty} \lambda_{k,\ell}(\nu)
    = -\infty
    \quad\text{and}\quad
    \lim_{\nu \to 0} \lambda_{k,\ell}(\nu)
    = j_{k+1,\ell}^2.
  \end{equation*}
  Associated to $\lambda_{k,\ell}$ is a space of eigenfunctions in
  spherical coordinates of the form 
  $R_{k,\ell}(r) \bigl(c_1\cos(k\theta)+ c_2\sin(k\theta)\bigr)$
  where $c_1$, $c_2\in \IR$ and
  \begin{equation*}
    R_{k,\ell}(r)
    := c J_k(\alpha_{k,\ell} \, r)
    + d I_k\Bigl(\frac{\kappa}{\alpha_{k,\ell}} \, r\Bigr),
  \end{equation*}
  for some $(c,d) \ne (0,0)$ suitably chosen (depending on $\kappa$,
  $k$, and $\ell$).
  Here $I_k$ (resp.\ $J_k$) denotes the modified Bessel function
  (resp.\ the Bessel function) of the first kind of order $k$.
  In addition $R_{k,\ell}$ possesses
  $\ell - 1$ roots in $\intervaloo{0,1}$, all of which are simple.
\end{theorem}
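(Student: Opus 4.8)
The plan is to separate variables, solve the resulting radial ODEs explicitly by exploiting the sign of $\nu$, and then read off the four assertions. Writing $u(r,\theta)=R(r)\bigl(c_1\cos(k\theta)+c_2\sin(k\theta)\bigr)$ and using that $\{\cos(k\theta),\sin(k\theta)\}_{k\in\IN}$ is complete on the circle, \eqref{eq:pbmgeneral} decouples into the radial problems $L_k^2R+\lambda L_kR+\nu R=0$ on $\intervaloo{0,1}$, with $R$ regular at $0$ and $R(1)=R'(1)=0$, where $L_k:=\partial_r^2+r^{-1}\partial_r-k^2r^{-2}$; the same completeness of the angular modes shows that the spectrum is exactly the union over $k$ of the radial spectra. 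Because $\nu<0$, the characteristic polynomial $X^2+\lambda X+\nu$ has, for \emph{every} real $\lambda$, two real roots of opposite sign; writing the negative one as $-\alpha^2$ and putting $\kappa:=\sqrt{-\nu}$, the positive one equals $\kappa^2/\alpha^2$ and $\lambda=\alpha^2+\nu/\alpha^2$. Factoring $L_k^2+\lambda L_k+\nu=(L_k+\alpha^2)(L_k-\kappa^2/\alpha^2)$ and discarding the solutions singular at the origin ($Y_k$ and $K_k$) produces exactly the two-dimensional solution space spanned by $J_k(\alpha r)$ and $I_k(\kappa r/\alpha)$ claimed in the statement.

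Imposing $R(1)=R'(1)=0$ then gives a homogeneous $2\times2$ system whose determinant $\Phi_k(\alpha):=\frac{\kappa}{\alpha}J_k(\alpha)I_k'(\kappa/\alpha)-\alpha I_k(\kappa/\alpha)J_k'(\alpha)$ must vanish; its zeros $0<\alpha_{k,1}<\alpha_{k,2}<\cdots$, transported by the strictly increasing bijection $\alpha\mapsto\alpha^2+\nu/\alpha^2$ of $\intervaloo{0,+\infty}$ onto $\IR$, are the eigenvalues $\lambda_{k,\ell}(\nu)$ in increasing order. For existence and ordering I would argue variationally in parallel: the generalized self-adjoint eigenvalue problem attached to the Rayleigh quotient $\bigl(\int_{D_1}\abs{\Delta u}^2+\nu\int_{D_1}u^2\bigr)\big/\int_{D_1}\abs{\nabla u}^2$, restricted to the $k$-th angular sector, has a min-max sequence $\lambda_{k,\ell}(\nu)\to+\infty$. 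Strict monotonicity and differentiability follow quickly from this self-adjoint picture: each eigenvalue is simple in its sector, so standard analytic perturbation theory makes $\nu\mapsto\lambda_{k,\ell}(\nu)$ differentiable, and the Feynman--Hellmann identity gives $\lambda_{k,\ell}'(\nu)=\int_{D_1}u^2\big/\int_{D_1}\abs{\nabla u}^2>0$.

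The two limits are extracted from $\Phi_k$. As $\kappa\to0$ the small-argument expansion of $I_k$ collapses $\Phi_k(\alpha)$ to a nonzero multiple of $kJ_k(\alpha)-\alpha J_k'(\alpha)=\alpha J_{k+1}(\alpha)$, so $\alpha_{k,\ell}\to j_{k+1,\ell}$ and $\lambda_{k,\ell}(\nu)=\alpha_{k,\ell}^2+\nu/\alpha_{k,\ell}^2\to j_{k+1,\ell}^2$; as $\kappa\to+\infty$ the asymptotics $I_k'/I_k\to1$ force $J_k(\alpha_{k,\ell})\to0$, i.e.\ $\alpha_{k,\ell}\to j_{k,\ell}$, whence $\lambda_{k,\ell}(\nu)=\alpha_{k,\ell}^2-\kappa^2/\alpha_{k,\ell}^2\to-\infty$.

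The main obstacle is the nodal count, since a fourth-order problem need not be Sturmian. Here the structure rescues us: $I_k(\kappa r/\alpha)$ is strictly positive and non-oscillatory on $\intervaloo{0,1}$, so all sign changes of $R_{k,\ell}$ must come from the oscillatory factor $J_k(\alpha_{k,\ell}r)$. I would make this quantitative through $S:=(L_k-\kappa^2/\alpha^2)R_{k,\ell}\propto J_k(\alpha_{k,\ell}r)$: the operator $L_k-\kappa^2/\alpha^2$ is disconjugate and its inverse is variation-diminishing, which bounds the number of interior sign changes of $R_{k,\ell}$ by that of $J_k(\alpha_{k,\ell}r)$. To upgrade this bound to the exact value $\ell-1$ I would anchor at $\nu=0$, where $R_{k,\ell}$ degenerates to a combination of $r^k$ and $J_k(j_{k+1,\ell}r)$ having precisely $\ell-1$ simple interior zeros (the case of \cite{DNT1}), and propagate the count along the analytic branch $\nu\mapsto R_{k,\ell}(\cdot\,;\nu)$. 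The delicate points, where I expect the bulk of the work to lie, are to prevent zeros from entering or leaving through the endpoints as $\nu$ decreases — near $r=1$ this reduces to controlling the sign of $R_{k,\ell}''(1)$, and near $r=0$ to the fixed vanishing order $r^k$ — and to exclude the birth of double interior zeros, which is exactly what the variation-diminishing estimate is designed to forbid.
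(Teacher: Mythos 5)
Your opening moves coincide with the paper's: separation of variables, the factorization $(L_k+\alpha^2)(L_k-\kappa^2/\alpha^2)$, discarding $Y_k$ and $K_k$, and the determinant condition (your $\Phi_k$ is the paper's $F_k$ in Proposition~\ref{eigenfunctions}). Where you genuinely diverge is the middle of the argument, and there your route is legitimate but buys less than the paper's. The paper writes $F_k=J_k\,I_k\,\tilde F_k$ and proves, via the monotonicity of $z\mapsto zI_k'(z)/I_k(z)$ and Tur\'an-type inequalities (Lemma~\ref{cserge1}, Lemma~\ref{lextLaforgia}), that $\tilde F_k'>0$ at every root; this single computation yields simplicity of the roots, the localization $j_{k,\ell}<\alpha_{k,\ell}<j_{k+1,\ell}$, and, through the Implicit Function Theorem, the $\C^1$ decreasing dependence $\kappa\mapsto\alpha_{k,\ell}(\kappa)$ (Theorem~\ref{thm:alpha kl}, Lemma~\ref{alpha decrease}). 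Your min-max/Kato/Feynman--Hellmann scheme is a sound alternative for existence, differentiability and strict monotonicity of $\lambda_{k,\ell}(\nu)$, and avoids the Bessel inequalities there. But it loses the localization of $\alpha_{k,\ell}$, and your limit arguments then have holes: local uniform collapse of $\Phi_k$ to a multiple of $\alpha J_{k+1}(\alpha)$ as $\kappa\to0$ shows zeros converge to zeros of $J_{k+1}$, not that the $\ell$-th converges to the $\ell$-th (you must exclude zeros escaping towards $0$ or $+\infty$); and as $\kappa\to\infty$ you need $\alpha_{k,\ell}(\kappa)$ to stay bounded before ``$J_k(\alpha_{k,\ell})\to0$'' says anything --- monotonicity of $\lambda_{k,\ell}$ in $\nu$ gives neither boundedness nor monotonicity of $\alpha_{k,\ell}$ in $\kappa$. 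The paper gets both limits for free from $j_{k,\ell}<\alpha_{k,\ell}(\kappa)<j_{k+1,\ell}$ plus $\partial_\kappa\alpha_{k,\ell}<0$ (Lemma~\ref{kappa->0}).

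The genuine gap is the nodal count. First, your variation-diminishing bound is off by one: since $j_{k,\ell}<\alpha_{k,\ell}<j_{k+1,\ell}<j_{k,\ell+1}$ (localization again), the source $J_k(\alpha_{k,\ell}r)$ has exactly $\ell$ sign changes in $\intervaloo{0,1}$, so total positivity of the Green's function for $L_k-\kappa^2/\alpha_{k,\ell}^2$ with $R(1)=0$ gives at most $\ell$ sign changes for $R_{k,\ell}$, not $\ell-1$; the missing unit must come from the second boundary condition $\partial_rR_{k,\ell}(1)=0$, which your Green's-function setup never uses. Second, and more seriously, variation-diminishing estimates control \emph{sign changes}, not \emph{zeros}: an interior tangency $R_{k,\ell}(r_0)=\partial_rR_{k,\ell}(r_0)=0$ produces no sign change and is invisible to them, so your claim that this mechanism ``excludes the birth of double interior zeros'' is wrong. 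Your continuation from $\nu=0$ therefore cannot rule out an isolated parameter value at which a double zero appears --- at which the simplicity assertion of the theorem already fails --- nor rigorously propagate the count $\ell-1$. Third, the endpoint control at $r=0$ needs the leading coefficient $c\,\alpha_{k,\ell}^k+d\,(\kappa/\alpha_{k,\ell})^k$ in $R_{k,\ell}(r)=\gamma\,r^k\bigl(1+o(1)\bigr)/(2^kk!)$ to be nonzero along the whole branch; this is exactly the paper's Lemma~\ref{lem:Rkl coeff0}, which is not obvious (for even $\ell$ one has $d=-J_k(\alpha_{k,\ell})<0$) and which you assume tacitly. The paper dispenses with continuation altogether by a static argument: alternation of the signs of $R_{k,\ell}$ at the extremal points $j'_{k,n}/\alpha_{k,\ell}$ (Lemma~\ref{lem:alt-signs}, using that $\abs{J_k(j'_{k,n})}$ decreases in $n$), then the maximum principle for \eqref{eq:Rkl-edo} on each subinterval together with the Hopf lemma for simplicity, and the sign of $\partial_r^2R_{k,\ell}(1)$ to exclude roots in the last interval (Theorem~\ref{thm:Rkl nodal}). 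Importing that scheme --- which requires the localization of $\alpha_{k,\ell}$ you skipped --- is the most direct repair of your plan.
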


Figure~\ref{fig:curves-couples} shows the graph of a few of the
functions $\lambda_{k,\ell}$.  It shows (and we prove) that
\begin{equation*}
  \lambda_1(\nu)
  = \min\bigl\{\lambda_{0,1}(\nu), \lambda_{1,1}(\nu) \bigr\}
\end{equation*}
where the minimum coincides with $\lambda_{0,1}$ for
$\nu \le (j_{0,1}j_{0,2})^2$ and alternates between $\lambda_{0,1}$
and $\lambda_{1,1}$ for larger $\nu$, which explains the results of
Theorem~\ref{lambda1nu}.
The nodal properties of the eigenfunctions are illustrated by the
graphs of the first six eigenfunctions for $\nu =  -1$
which are drawn in Figure~\ref{fig:eigenfunctions}.

\begin{figure}[hbt]
  \centering
  \newcommand{\numin}{-3200}%
  \newcommand{\numax}{5500}%
  \begin{tikzpicture}[x=0.04pt, y=0.6pt]
    \draw[->] (\numin, 0) -- (\numax, 0) node[above]{$\nu$};
    \draw[->] (0, -150) -- (0, 200) node[left]{$\lambda$};
    \draw[fill, color=gray!40] (-1990.587456, 75.1003386) circle(2pt);

    \foreach \v/\y/\t in {
      176.221/36.2544/{j_{0,1}^2 j_{0,2}^2},
      722.624/63.9004/{j_{1,1}^2 j_{1,2}^2},
      2281.9/105.358/{j_{0,2}^2 j_{0,3}^2},
      5094.08/152.718/{j_{1,2}^2 j_{1,3}^2}
    }{
      \draw[dashed, color=gray!90] (\v, \y) -- (\v, -3pt);
      \path (\v, 0) ++(1.4ex, -2.8ex) node[rotate=-50]{$\scriptstyle \t$};
    }
    \draw[color=gray!60, dashed] plot file{\graphpath{lower-bound.dat}};
    \node[color=gray!60, below left, yshift=3pt] at (4500, 130)
    {\rotatebox{15}{$\scriptstyle \lambda = 2\sqrt{\nu}$}};
    \begin{scope}
      \clip (\numin, -150) rectangle (\numax, 200);
      \foreach \k/\l/\c/\xy in {
        3/1/lambda31/{-2000, 10}, 
        0/1/darkred/{-300,-80},  0/2/darkred/{-2950, -26},
        0/3/darkred/{-500, 108},
        1/1/darkblue/{-1000, -60},  1/2/darkblue/{-2950, 27},
        1/3/darkblue/{-600, 142},
        2/1/darkorange/{-1800, -40},
        2/2/darkorange/{-2400, 48},
        5/1/darkgreen/{-300, 80}}{
        \draw[color=\c] plot file{\graphpath{eigenval-\k-\l.dat}};
        \node[color=\c] at (\xy) {$\lambda_{\k,\l}$};
      }
    \end{scope}
  \end{tikzpicture}
  \caption{Curves of $(\nu, \lambda)$ along which
    \eqref{eq:pbmgeneral} possesses a nontrivial solution.
    The dot indicates where the graphs of $\lambda_{0,3}$ (red)
    and $\lambda_{5,1}$ (green) cross.}
  \label{fig:curves-couples}
\end{figure}
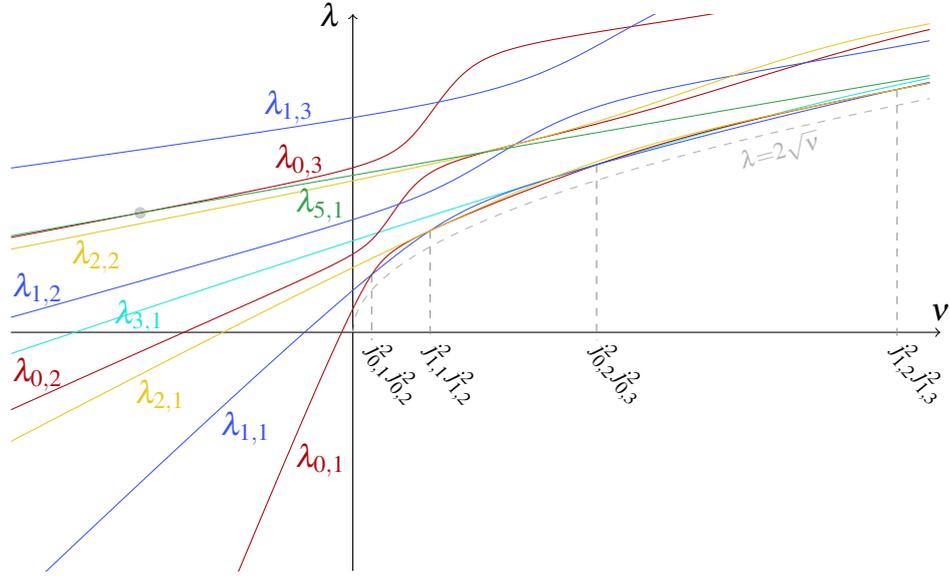

\begin{figure}[bht]
  \centering
  \begin{minipage}[b]{0.31\linewidth}
    \centering
    \includegraphics[width=\linewidth, viewport=56 43 571 403]{%
      \graphpath{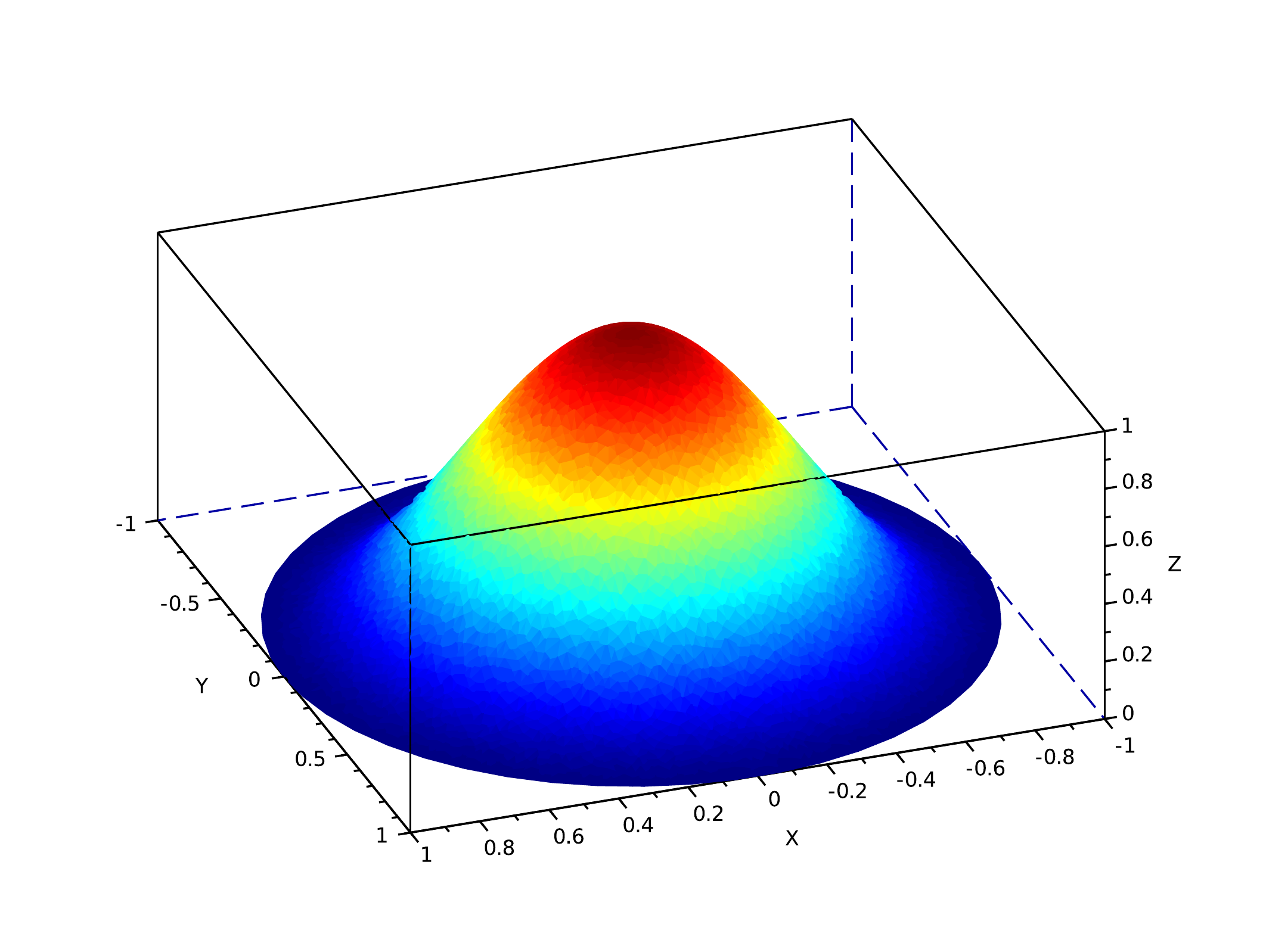}}
    $\phi_1$,\hspace{0.5em} $(k,\ell) = (0,1)$
  \end{minipage}
  \hfill
    \begin{minipage}[b]{0.31\linewidth}
    \centering
    \includegraphics[width=\linewidth, viewport=56 39 574 403]{%
      \graphpath{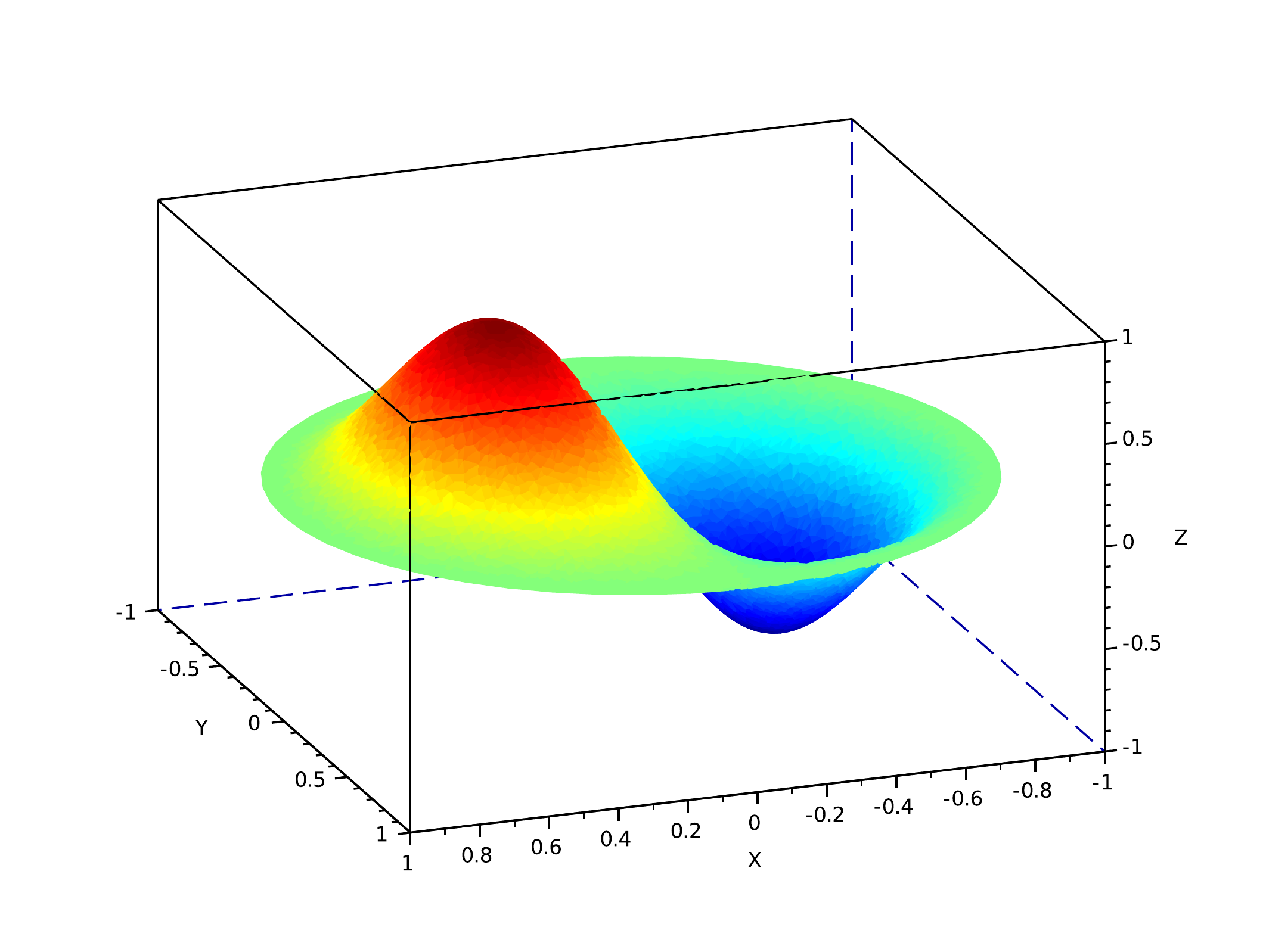}}
    $\phi_2$,\hspace{0.5em} $(k,\ell) = (1,1)$
  \end{minipage}
  \hfill
    \begin{minipage}[b]{0.31\linewidth}
    \centering
    \includegraphics[width=\linewidth, viewport=56 43 574 403]{%
      \graphpath{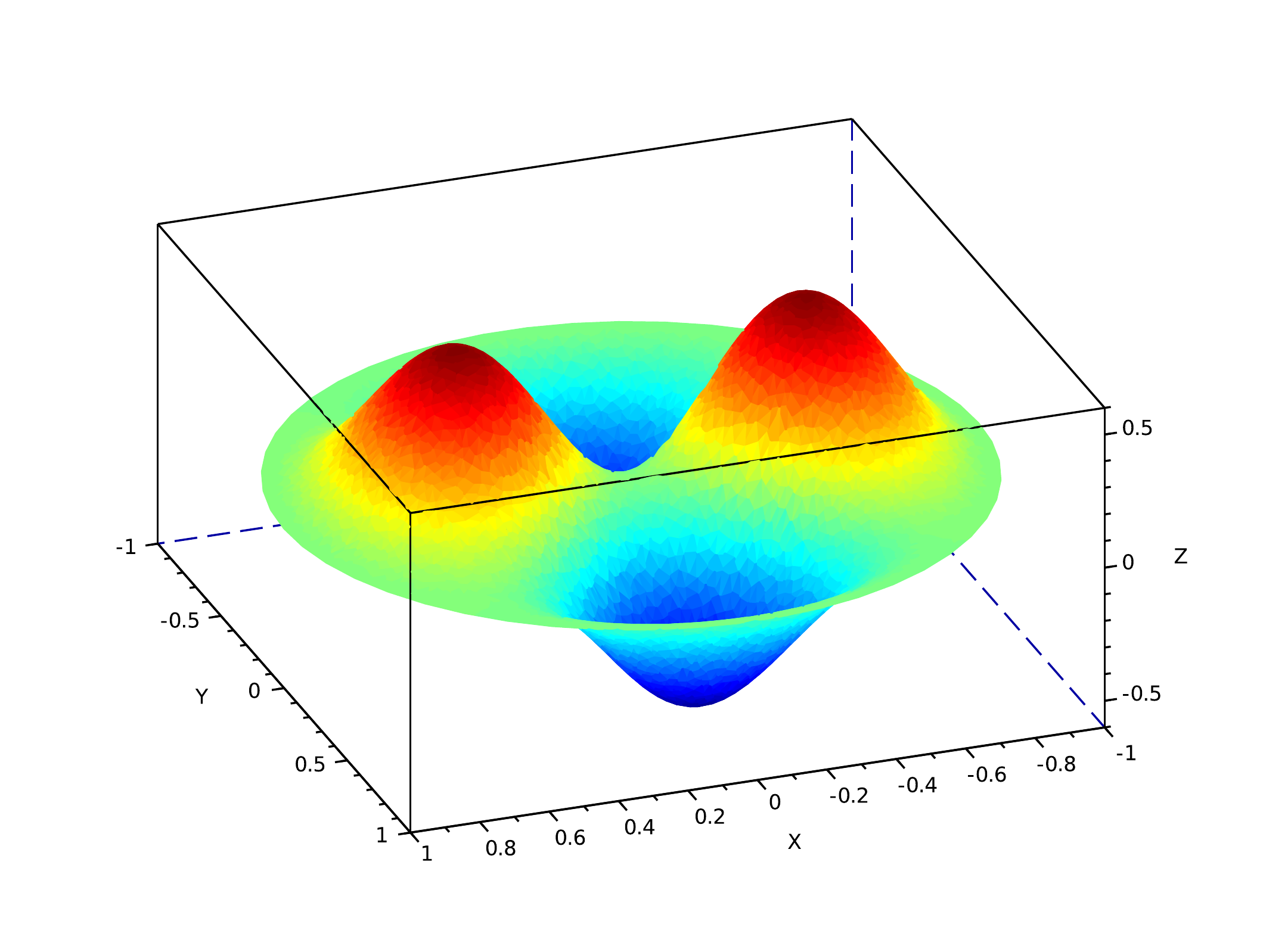}}
    $\phi_3$,\hspace{0.5em} $(k,\ell) = (2,1)$
  \end{minipage}

  \vspace{2ex}
  \begin{minipage}[b]{0.31\linewidth}
    \centering
    \includegraphics[width=\linewidth, viewport=56 39 574 403]{%
      \graphpath{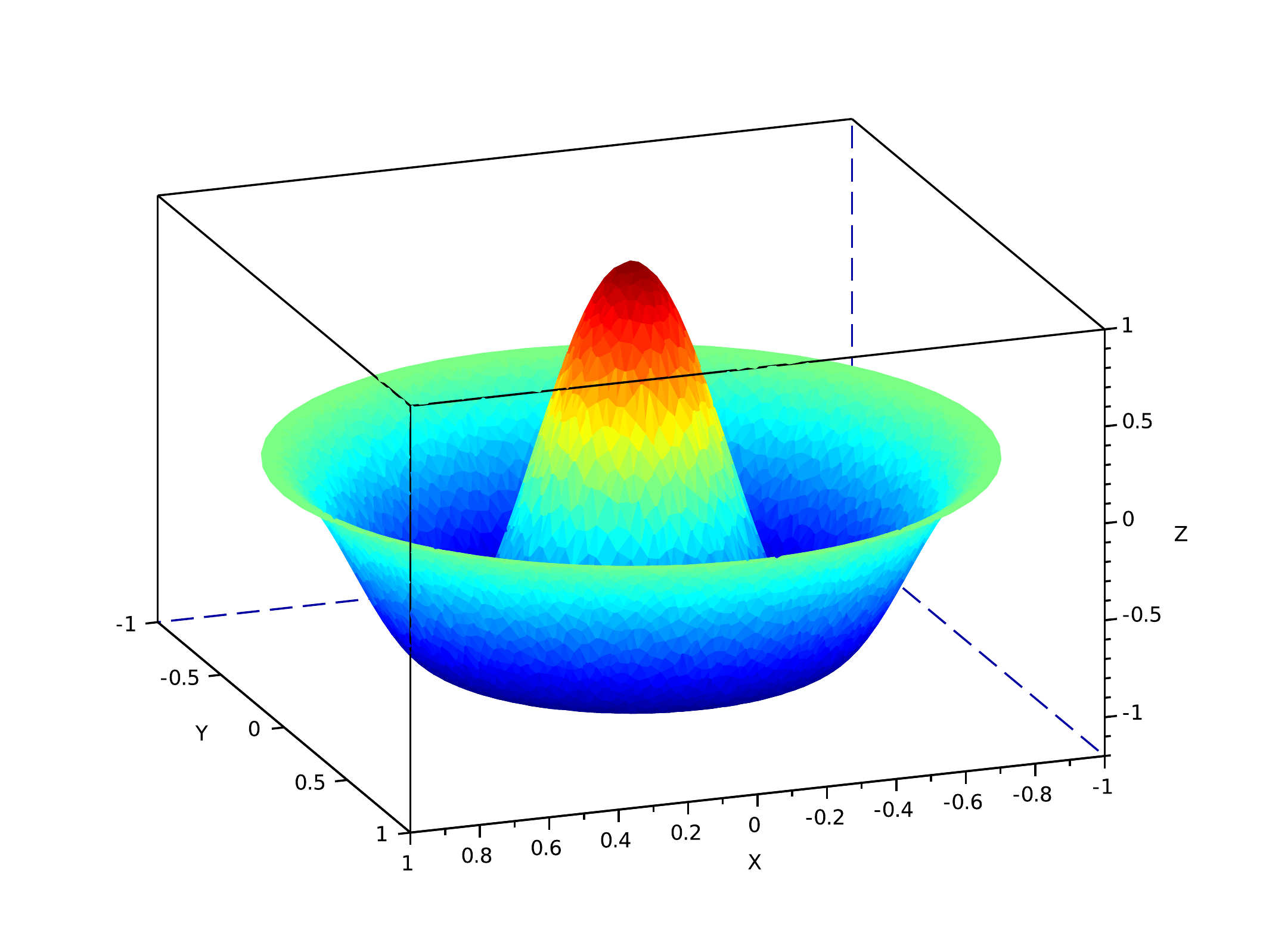}}
    $\phi_4$,\hspace{0.5em} $(k,\ell) = (0,2)$
  \end{minipage}
  \hfill
    \begin{minipage}[b]{0.31\linewidth}
    \centering
    \includegraphics[width=\linewidth, viewport=56 41 575 403]{%
      \graphpath{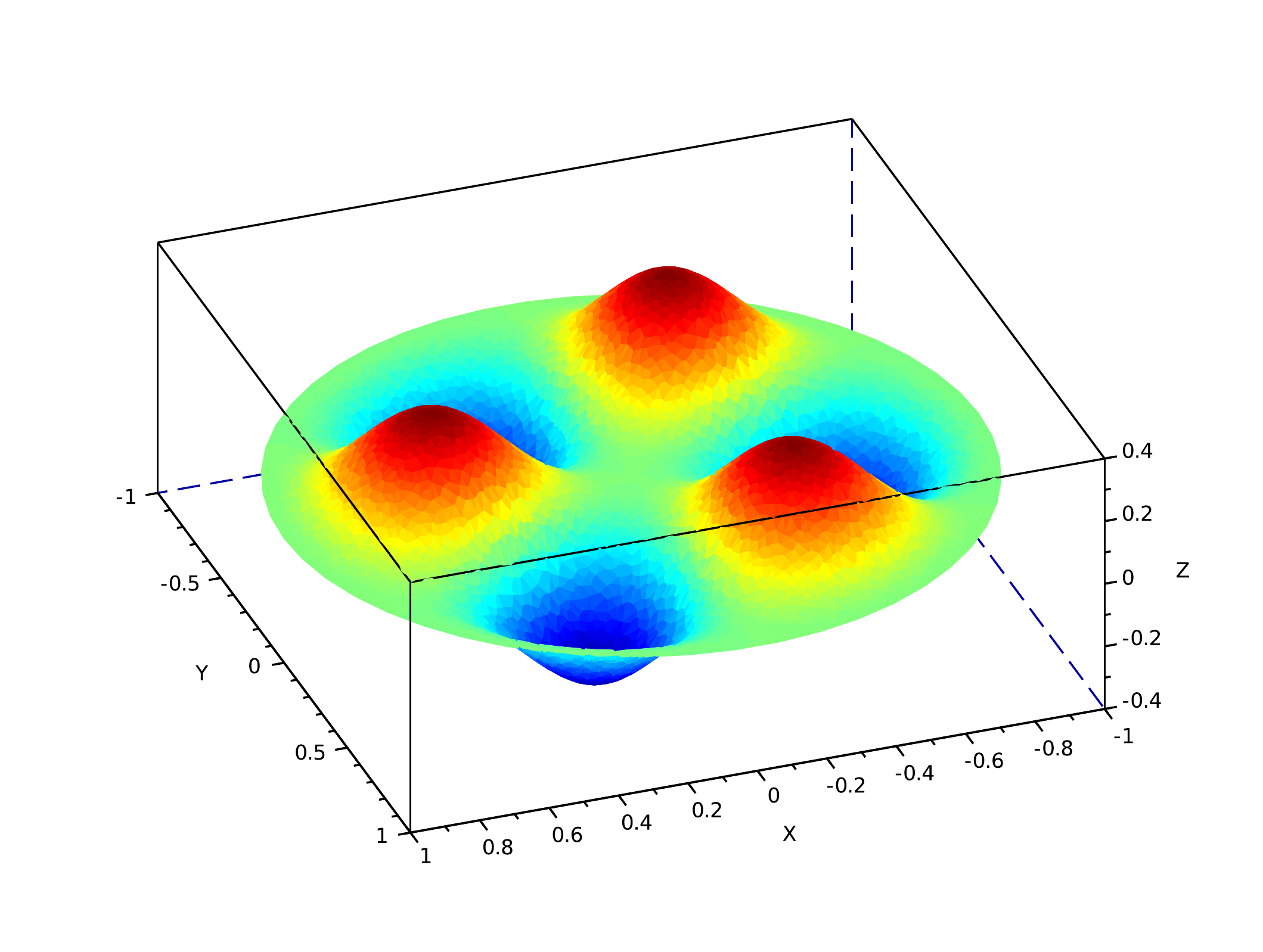}}
    $\phi_5$,\hspace{0.5em} $(k,\ell) = (3,1)$
  \end{minipage}
  \hfill
    \begin{minipage}[b]{0.31\linewidth}
    \centering
    \includegraphics[width=\linewidth, viewport=56 43 575 403]{%
      \graphpath{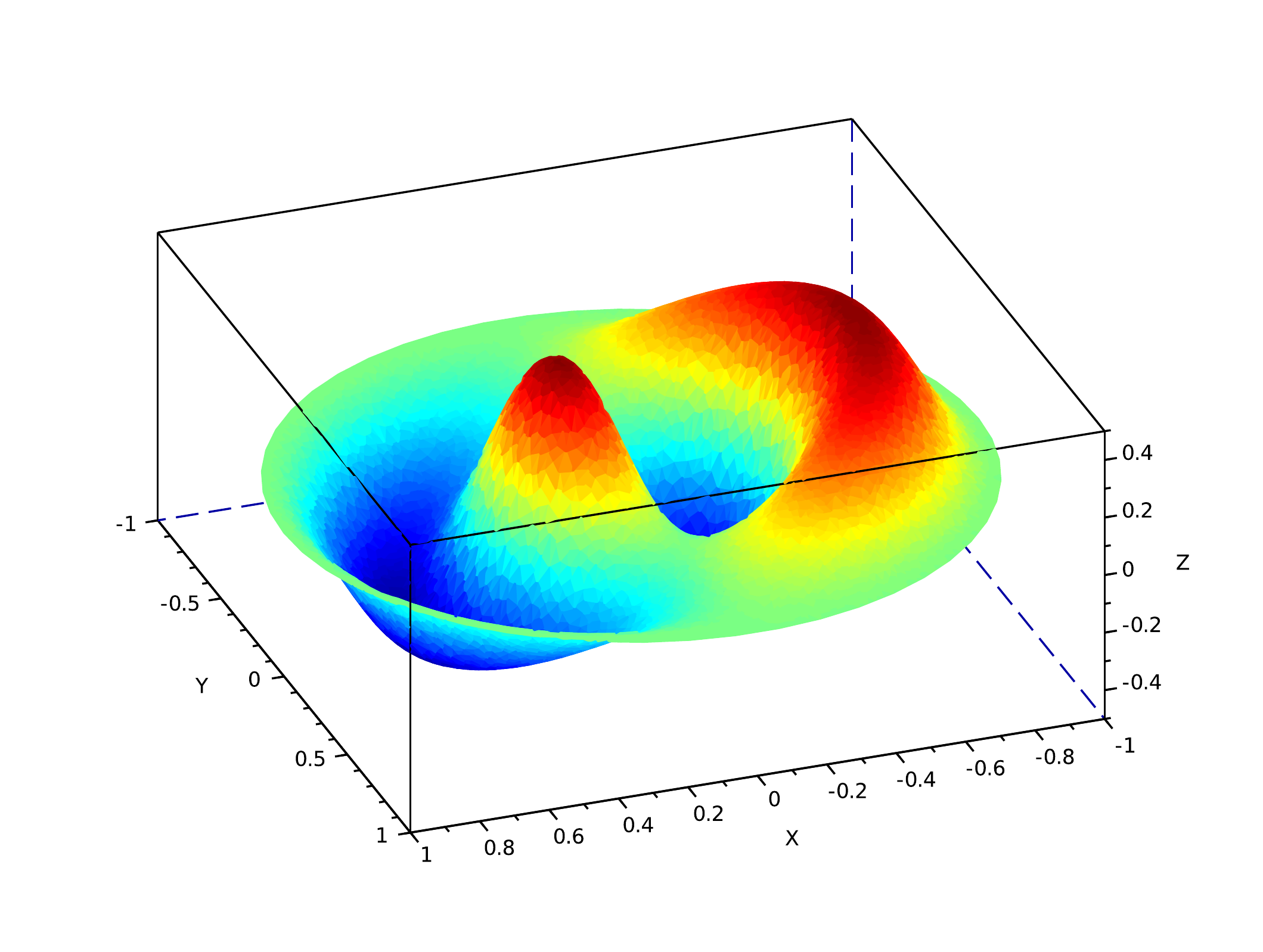}}
    $\phi_6$,\hspace{0.5em} $(k,\ell) = (1,2)$
  \end{minipage}

  \caption{Graphs of the first eigenfunctions for $\nu=- 1$,
    mentioning the value $(k,\ell)$ of Theorem~\ref{thm:lambda_kl} to
    which they correspond.}
  \label{fig:eigenfunctions}
\end{figure}

\medbreak

The paper is organized as follows and concerns the case $\nu<0$.
 In Section~\ref{Sect1}, we explain how to find solutions
to~\eqref{eq:pbmgeneral} despite the fact that the method of
separation of variables is not directly applicable. In
Section~\ref{Sect2}, we show (see Theorem~\ref{thm:alpha kl}) that,
for all $k \in \IN$, there exists an increasing sequence
$\alpha_{k,\ell} = \alpha_{k,\ell}(\kappa) \in \intervaloo{
  j_{k,\ell},\, j_{k+1,\ell}}$,
$\ell \ge 1$, with $\kappa=\sqrt{-\nu}$, such that
$\lambda_{k,\ell}(\nu) = \alpha_{k,\ell}^2 -
{\kappa^2}/{\alpha_{k,\ell}^2}$
is an eigenvalue of~\eqref{eq:pbmgeneral} with corresponding
eigenfunctions of the form $R_{k,\ell}(r)\e^{\pm\i k\theta}$.
The minimal value of the spectrum
$\lambda_1 = \lambda_1(\kappa) := \min_{k,\ell} \lambda_{k,\ell}$
corresponds to the  minimum of
$\{ \alpha_{k,\ell} \mid k \in \IN,\ \ell \ge 1 \}$
which is given by $\alpha_{0,1}$.


In Section~\ref{sec:nodal prop}, we show that $\lambda_1(\nu)$ is
simple and its eigenfunction $\varphi_1$ is radial, one-signed and
$\abs{\varphi_1}$ is decreasing with respect to $r$ (see
Theorem~\ref{R01}).
We further give precise statements about the nodal properties of the
other eigenfunctions in Theorem~\ref{thm:Rkl nodal}.

Theorem \ref{lambda1nu} can then be deduced from Theorem
\ref{thm:lambda1} together with \cite[Theorem 4.3, Lemmas 4.4, 4.5 and
4.6]{DNT1} for what concerns the behaviour of the first eigenvalue
while the information concerning the first eigenfunction are deduced
from Theorem \ref{R01} together with  \cite[Theorems~4.17, 4.18, 5.5
and Proposition~5.11]{DNT1}.
Theorem~\ref{thm:lambda_kl} is a direct consequence from
Theorem~\ref{thm:alpha kl}, Lemma~\ref{alpha decrease},
Lemma~\ref{kappa->0}, Theorem~\ref{thm:Rk1>0} and Theorem~\ref{thm:Rkl
  nodal}.

The case $D_1\subseteq \IR^N$ with $N>2$ is also considered in
section~\ref{sec:N>2}.

\section{Preliminaries}
\label{Sect1}

Let us first consider the case $N=2$ (see Section~\ref{sec:N>2} for
the case $N>2$).
\medbreak

As the case $\nu\geq 0$ in \eqref{eq:pbmgeneral} was
fully treated in \cite{DNT1}, we can restrict ourselves to the case
$\nu< 0$, that we rewrite as
\begin{equation}
  \label{eq:pbm}
  \begin{cases}
    \Delta^2 u- \kappa^2 u=-\lambda \Delta u &\text{in } D_1,\\
    u=\partial_r u= 0 &\text{on } \partial D_1,
  \end{cases}
\end{equation}
with $\kappa> 0$.
Similarly to \cite{DNT1}, we factorize
\begin{equation}\label{serge3}
  \Delta^2u+\lambda \Delta u -\kappa^2 \, u
  = (\Delta+\alpha^2) (\Delta+\beta^2)u
  = 0
\end{equation}
with $\alpha, \beta\in \IC$ such that
$\alpha^2 \beta^2 = -\kappa^2$ and $\alpha^2+\beta^2 = \lambda$. As
$\kappa>0$, $\alpha$ and $\beta$ are both non zero and we can choose
$\alpha$ as a positive real number and $\beta={\i\kappa}/{\alpha}$
that is clearly different from $\alpha$.

As we work in two dimensions, we use the ansatz
$u(r,\theta)=R(r) \e^{\i k\theta}$ with $k\in \IZ$, where $(r,\theta)$
are the polar coordinates, and notice that~\eqref{serge3} is
equivalent to a fourth order ordinary differential equation (in $\partial_r$)
\begin{equation}\label{serge1polar}
  L(\partial_r, r, \alpha,\beta, \abs{k}) R=0.
\end{equation}
Hence by the theory of ordinary differential equations, $L$ has four
linearly independent solutions.  To find them it suffices to notice
that
\begin{equation*}
  (\Delta+\alpha^2)u=0
  \quad\Rightarrow\quad
  (\Delta+\alpha^2)(\Delta+\beta^2)u=0.
\end{equation*}
Thus, if
\begin{equation}
  \label{serge4}
  (\Delta+\alpha^2)\bigl(R(r) \e^{\i k\theta}\bigr)=0,
\end{equation}
then $R$ is a solution to~\eqref{serge1polar}.  But a solution
to~\eqref{serge4} is simpler to find. Indeed such $R$ satisfy the
Bessel equation
\begin{equation*}
  (r\partial_r)^2 R+\alpha^2 r^2 R=k^2 R.
\end{equation*}
As $\alpha \ne 0$, $R$ is then a linear combination of
$J_{|k|}(\alpha r)$ and of $Y_{|k|}(\alpha r)$.

Similarly, if
\begin{equation}
  \label{serge4bis}
  \Bigl(\Delta-\frac{\kappa^2}{\alpha^2} \Bigr)
  \bigl(R(r) \e^{\i k\theta}\bigr)=0,
\end{equation}
then $R$ is a solution to~\eqref{serge1polar}.  But $R$ is solution
to~\eqref{serge4bis} if and only if $R$ satisfies the modified Bessel
equation
\begin{equation*}
  (r\partial_r)^2 R-\frac{\kappa^2}{\alpha^2} r^2 R=k^2 R.
\end{equation*}
Therefore, $R$ is a linear combination of
$I_{|k|}(\frac{\kappa r}{\alpha})$ and of
$K_{|k|}(\frac{\kappa r}{\alpha})$.

\medbreak

Summing up, we have proved the following result.

\begin{lemma}
  \label{lserge1}
  Let $k\in \IZ$.  If $\alpha\in \intervaloo{0,\infty}$ and
  $\beta = \i\kappa/\alpha$, then the four linearly independent
  solutions to~\eqref{serge1polar} are $J_{|k|}(\alpha r)$,
  $Y_{|k|}(\alpha r)$, $I_{|k|}(\frac{\kappa r}{\alpha})$ and
  $K_{|k|}(\frac{\kappa r}{\alpha})$.
\end{lemma}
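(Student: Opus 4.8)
The plan is to assemble the four candidate solutions already isolated in the discussion preceding the statement and then to check that they form a fundamental system for~\eqref{serge1polar}. First I would record that, since $\Delta+\alpha^2$ and $\Delta+\beta^2$ are both polynomials in the single operator~$\Delta$, they commute; this is exactly what makes the implication used above~\eqref{serge4} legitimate, because $(\Delta+\alpha^2)(\Delta+\beta^2)u = (\Delta+\beta^2)(\Delta+\alpha^2)u$, so any $u$ with $(\Delta+\alpha^2)u=0$ (and symmetrically any $u$ with $(\Delta+\beta^2)u=0$) solves the product equation. Writing $u=R(r)\e^{\i k\theta}$, the first factor reduces to the Bessel equation and contributes $J_{\abs{k}}(\alpha r)$ and $Y_{\abs{k}}(\alpha r)$, while the second factor, with $\beta^2=-\kappa^2/\alpha^2$, reduces to the modified Bessel equation and contributes $I_{\abs{k}}(\kappa r/\alpha)$ and $K_{\abs{k}}(\kappa r/\alpha)$. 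This already exhibits the four announced functions as solutions of~\eqref{serge1polar}.

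The remaining and only substantive point is linear independence. I would introduce the two second-order radial operators $L_\alpha$ and $L_\beta$ obtained from $\Delta+\alpha^2$ and $\Delta+\beta^2$ acting on functions of the form $R(r)\e^{\i k\theta}$, so that $L=L_\alpha L_\beta=L_\beta L_\alpha$. Each kernel $\ker L_\alpha$ and $\ker L_\beta$ is two-dimensional, spanned respectively by $\{J_{\abs{k}}(\alpha r),\,Y_{\abs{k}}(\alpha r)\}$ and $\{I_{\abs{k}}(\kappa r/\alpha),\,K_{\abs{k}}(\kappa r/\alpha)\}$. The crucial observation is that $\ker L_\alpha\cap\ker L_\beta=\{0\}$: a function $R$ lying in both satisfies the two Bessel-type equations simultaneously, and subtracting one from the other yields $(\alpha^2+\kappa^2/\alpha^2)\,r^2 R=0$ on $\intervaloo{0,\infty}$; since $\alpha$ is real and $\kappa>0$ the coefficient $\alpha^2+\kappa^2/\alpha^2$ is strictly positive, forcing $R\equiv 0$. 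This is nothing but the non-degeneracy $\alpha^2\ne\beta^2$ guaranteed by the choice $\beta=\i\kappa/\alpha$ at the beginning of the section.

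From the trivial intersection, the sum $\ker L_\alpha+\ker L_\beta$ is direct and hence four-dimensional; being contained in $\ker L$, which is itself four-dimensional by the theory of ordinary differential equations recalled above~\eqref{serge1polar}, the inclusion is forced to be an equality. Therefore the four functions $J_{\abs{k}}(\alpha r)$, $Y_{\abs{k}}(\alpha r)$, $I_{\abs{k}}(\kappa r/\alpha)$, $K_{\abs{k}}(\kappa r/\alpha)$ span $\ker L$ and, being four in number, constitute a basis of linearly independent solutions. The only genuinely delicate bookkeeping concerns the regular singular point $r=0$, and I would sidestep it by carrying out the entire dimension count on the open interval $\intervaloo{0,\infty}$, where all coefficients are smooth and the standard existence–uniqueness theory used to assert $\dim\ker L=4$ applies without modification.
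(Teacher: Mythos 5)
Your proof is correct and follows the same route as the paper: factor the operator as in~\eqref{serge3}, use the fact that the two factors commute (both being polynomials in $\Delta$) so that each of~\eqref{serge4} and~\eqref{serge4bis} produces solutions of~\eqref{serge1polar}, and identify those solutions as Bessel and modified Bessel functions. The one place where you go beyond the text is the linear-independence step: the paper exhibits the four functions and immediately concludes (``Summing up, we have proved the following result''), leaving independence implicit, whereas you prove it by observing that a common element of the two kernels would satisfy simultaneously the Bessel equation with parameter $\alpha^2$ and the modified Bessel equation with parameter $-\kappa^2/\alpha^2$, so subtraction gives $\bigl(\alpha^2+\kappa^2/\alpha^2\bigr)r^2R=0$ and hence $R\equiv 0$; the sum $\ker L_\alpha + \ker L_\beta$ is then direct, four-dimensional, and must coincide with the four-dimensional solution space of~\eqref{serge1polar} on $\intervaloo{0,\infty}$. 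This is exactly the justification the paper's terse wording presupposes (it is where the hypothesis $\alpha^2\ne\beta^2$, i.e.\ $\kappa>0$, is actually used), and your care in carrying out the dimension count on the open interval $\intervaloo{0,\infty}$, away from the regular singular point $r=0$, is also appropriate.
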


\section{Eigenvalue problem}
\label{Sect2}

Here we want to characterize the full spectrum of the buckling problem
on the unit disk.  In other words, we look for a
$u \ne 0$ and $\lambda \in \IR$ such that
\begin{equation}
  \label{serge5}
  \begin{cases}
    \Delta^2 u- \kappa^2 u=-\lambda \Delta u &\text{ in } D_1,\\
    u=\partial_r u= 0 &\text{ on } \partial D_1,
  \end{cases}
\end{equation}
where $D_1$ is the unit ball of $\mathbb R^2$.

\begin{proposition}
  \label{eigenfunctions}
  The eigenfunctions of the boundary value problem~\eqref{serge5} are
  of the form $u=R(r) \e^{\i k\theta}$ with $k\in \IZ$ and $R$ given by
  \begin{equation}
    \label{eq R}
    R(r) = c J_{|k|}(\alpha r)
    + d I_{|k|}\Bigl(\frac{\kappa}{\alpha} r \Bigr),
  \end{equation}
  for some $c,d\in \IR$,  where  $\alpha$ is a positive solution to
  \begin{equation}\label{Dbis}
    F_k(\alpha) :=
    \frac{\kappa}{\alpha} J_{|k|}(\alpha)
    I'_{|k|} \Bigl(\frac{\kappa}{\alpha} \Bigr)
    - \alpha I_{|k|}\Bigl(\frac{\kappa}{\alpha}\Bigr)  J'_{|k|}(\alpha )
    =0.
  \end{equation}
  The corresponding eigenvalue is $\lambda = \alpha^2 -
  {\kappa^2}/{\alpha^2}$.
\end{proposition}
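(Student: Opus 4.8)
The plan is to use the rotational symmetry of \eqref{serge5} to reduce the PDE to the radial ODE \eqref{serge1polar}, and then to convert regularity at the origin and the two clamped conditions at $r=1$ into algebraic constraints on the coefficients. First I would exploit the fact that $\Delta^2$, $\Delta$ and multiplication by $\kappa^2$ all commute with rotations, while the disk $D_1$ and the boundary conditions $u = \partial_r u = 0$ are rotation invariant; consequently each eigenspace is invariant under the action of $SO(2)$. Expanding an arbitrary eigenfunction in its Fourier series $u(r,\theta) = \sum_{k\in\IZ} R_k(r)\,\e^{\i k\theta}$, every nonzero term $R_k(r)\,\e^{\i k\theta}$ is again an eigenfunction for the same $\lambda$, so it suffices to describe the single-mode solutions $u = R(r)\,\e^{\i k\theta}$, $k\in\IZ$, whose radial profiles satisfy \eqref{serge1polar}.

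Next, for such a $u$ with eigenvalue $\lambda$, I would select $\alpha,\beta$ through the factorization \eqref{serge3}: given $\lambda$, the numbers $\alpha^2$ and $\beta^2$ are the two roots of $t^2 - \lambda t - \kappa^2 = 0$, whose product is $-\kappa^2 < 0$; hence one root is positive and the other negative, and we take $\alpha \in \intervaloo{0,\infty}$ with $\beta = \i\kappa/\alpha$. This already gives $\lambda = \alpha^2 + \beta^2 = \alpha^2 - \kappa^2/\alpha^2$, the claimed eigenvalue, and since $\alpha \mapsto \alpha^2 - \kappa^2/\alpha^2$ is a strictly increasing bijection of $\intervaloo{0,\infty}$ onto $\IR$, parametrizing by $\alpha$ is equivalent to parametrizing by $\lambda$. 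By Lemma~\ref{lserge1}, $R$ is then a linear combination of $J_{|k|}(\alpha r)$, $Y_{|k|}(\alpha r)$, $I_{|k|}(\kappa r/\alpha)$ and $K_{|k|}(\kappa r/\alpha)$. Since $u$ must be a finite-energy function on the whole disk while $Y_{|k|}$ and $K_{|k|}$ are singular at the center, the coefficients of these two must vanish, leaving exactly the form \eqref{eq R}, namely $R(r) = c\,J_{|k|}(\alpha r) + d\,I_{|k|}(\kappa r/\alpha)$.

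Finally I would impose $R(1) = 0$ and $R'(1) = 0$. Using $R'(r) = c\,\alpha J'_{|k|}(\alpha r) + d\,(\kappa/\alpha) I'_{|k|}(\kappa r/\alpha)$, these two conditions form the homogeneous linear system
\begin{equation*}
  \begin{cases}
    c\, J_{|k|}(\alpha) + d\, I_{|k|}(\kappa/\alpha) = 0,\\
    c\, \alpha J'_{|k|}(\alpha) + d\, \frac{\kappa}{\alpha} I'_{|k|}(\kappa/\alpha) = 0.
  \end{cases}
\end{equation*}
A nontrivial pair $(c,d)\ne(0,0)$ exists if and only if the determinant of this system vanishes, and that determinant is precisely $\frac{\kappa}{\alpha} J_{|k|}(\alpha) I'_{|k|}(\kappa/\alpha) - \alpha I_{|k|}(\kappa/\alpha) J'_{|k|}(\alpha) = F_k(\alpha)$, which is the condition \eqref{Dbis}.

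I expect the only genuinely delicate step to be the first one: rigorously justifying that an arbitrary eigenfunction splits into single Fourier modes of the stated form and that each mode is individually a solution of \eqref{serge5}. This relies on the $SO(2)$-invariance of the problem together with enough regularity of $u$ to differentiate the Fourier series term by term, so that each $R_k$ is a classical solution of \eqref{serge1polar} to which Lemma~\ref{lserge1} applies. The remaining steps — reading off $\alpha,\beta$ from $\lambda$, discarding the singular solutions by regularity at the origin, and computing the $2\times2$ determinant — are routine once Lemma~\ref{lserge1} is in hand.
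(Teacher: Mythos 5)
Your proof is correct and follows essentially the same route as the paper's: the ansatz $u = R(r)\,\e^{\i k\theta}$, Lemma~\ref{lserge1}, elimination of $Y_{|k|}$ and $K_{|k|}$ by regularity at the origin, and the vanishing of the $2\times 2$ determinant coming from the clamped conditions at $r=1$, which is exactly \eqref{Dbis}. The only difference is one of bookkeeping: you spell out the Fourier-mode reduction and the quadratic $t^2-\lambda t-\kappa^2=0$ linking $\alpha$, $\beta$ and $\lambda$, steps the paper delegates respectively to \cite[Theorem~3.2]{DNT1} and to Section~\ref{Sect1}.
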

\begin{proof}
  According to the previous section, we look for solutions $u$ to
  \eqref{eq:pbm} in the form
  \begin{equation*}
    u=R(r) \e^{\i k\theta}, \qquad
    \text{with } k\in \IZ.
  \end{equation*}
  From  Lemma \ref{lserge1}, we see that
  \begin{equation}
    \label{eq Rbis}
    R(r) = c J_{|k|}(\alpha r)
    + d I_{|k|}\Bigl(\frac{\kappa}{\alpha} r \Bigr),
  \end{equation}
  for some $c,d\in \IR$ (since $R$ and $R'$ are bounded near $r=0$).
  Hence the boundary conditions at $r=1$ lead to the system
  \begin{equation}
    \label{eq:R bd cond}
    \begin{cases}
      c\, J_{|k|}(\alpha)
      + d\, I_{|k|}\bigl(\frac{\kappa}{\alpha} \bigr) = 0,
      \\[1\jot]
      c \,\alpha J'_{|k|}(\alpha )
      + d\, \frac{\kappa}{\alpha}
      I'_{|k|}\bigl(\frac{\kappa}{\alpha} \bigr) = 0.
    \end{cases}
  \end{equation}
  This $2\times 2$ system has a non-trivial solution $(c,d)$ if and only if
  its determinant is equal to zero, namely if and only if~\eqref{Dbis}
  is satisfied.

  The fact that $(\e^{\i k \theta})_{k \in \IZ}$ form a basis
  of $L^2(\intervaloo{0, 2\pi})$
  essentially allows to conclude that no other eigenvalues exist
  (see \cite[Theorem~3.2]{DNT1} for details).
\end{proof}

Let us start with a technical result.

\begin{lemma}
  \label{cserge1}%
  Let $k \in \IN$.  Then the function
  \begin{equation*}
    G_k : \intervaloo{0, +\infty} \to \IR :
    z \mapsto \frac{z I'_k(z)}{I_k(z)}
  \end{equation*}
  has a positive derivative and thus is increasing on
  $\intervaloo{0, +\infty}$.
\end{lemma}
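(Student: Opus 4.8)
The plan is to turn the monotonicity of $G_k$ into a statement about the sign of a single auxiliary quantity, exploiting the modified Bessel equation $z^2 I_k'' + z I_k' - (z^2+k^2) I_k = 0$ satisfied by $I_k$. Differentiating $G_k = z I_k'/I_k$ and substituting $I_k''$ from this equation, the cross terms $I_k' I_k$ cancel and one is left with the Riccati-type identity
\begin{equation*}
  z\, G_k'(z) = z^2 + k^2 - G_k(z)^2 =: Q(z).
\end{equation*}
Since $I_k(z) > 0$ and $I_k'(z) > 0$ for $z > 0$ (all power-series coefficients of $I_k$ are positive), we have $G_k > 0$, and the whole problem reduces to proving $Q(z) > 0$ on $\intervaloo{0, +\infty}$.

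Next I would pin down the behaviour of $Q$ near the origin. From the expansion $I_k(z) = (z/2)^k\bigl(\tfrac1{k!} + \tfrac{(z/2)^2}{(k+1)!} + O(z^4)\bigr)$ one obtains $G_k(z) = k + \tfrac{z^2}{2(k+1)} + O(z^4)$, whence
\begin{equation*}
  Q(z) = z^2 + k^2 - G_k(z)^2 = \frac{z^2}{k+1} + O(z^4),
\end{equation*}
which is strictly positive for small $z > 0$ (this also covers $k = 0$, where $G_0 = z I_1/I_0 \to 0$). Thus $Q > 0$ on some interval $\intervaloo{0, \varepsilon}$.

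The key step is a barrier argument ruling out any zero of $Q$. Suppose $Q$ vanished somewhere on $\intervaloo{0, +\infty}$ and let $z_0$ be the smallest such point, so that $Q > 0$ on $\intervaloo{0, z_0}$ and $Q(z_0) = 0$. At $z_0$ the identity gives $G_k'(z_0) = Q(z_0)/z_0 = 0$, and differentiating $Q = z^2 + k^2 - G_k^2$ yields $Q'(z_0) = 2 z_0 - 2 G_k(z_0) G_k'(z_0) = 2 z_0 > 0$. But $Q$ approaching $0$ from strictly positive values at its first zero forces the left derivative $Q'(z_0) \le 0$, a contradiction. Hence $Q$ never vanishes, so $Q > 0$ throughout $\intervaloo{0, +\infty}$, and therefore $G_k'(z) = Q(z)/z > 0$, which is the claim.

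The main obstacle is the sign analysis at the origin: for $k \ge 1$ one meets the degenerate situation $G_k(0^+) = k$, so $Q(0^+) = 0$ and a crude comparison is inconclusive; it is the second-order term $z^2/(k+1)$ that makes $Q$ enter the region $z > 0$ positively, and getting this expansion right uniformly in $k$ (including $k = 0$) is what the argument really hinges on. Once that is secured, the Riccati identity and the first-zero contradiction are essentially forced.
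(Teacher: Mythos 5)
Your proof is correct, but after its first step it follows a genuinely different route from the paper's. Both arguments begin identically: differentiating $G_k$ and eliminating $I_k''$ via the modified Bessel equation \eqref{A:6I} yields the Riccati identity $z\,G_k'(z) = z^2 + k^2 - G_k(z)^2$, which is exactly the paper's first displayed formula for $G_k'$. From there the paper uses the recurrence \eqref{A:4I} to rewrite this as $G_k'(z) = -z\bigl(u^2 + \tfrac{2k}{z}u - 1\bigr)$ with $u = I_{k+1}(z)/I_k(z)$, and concludes by the Tur\'an-type inequality $I_{k-1}I_{k+1} < I_k^2$ (Lemma~\ref{lextLaforgia}, which rests on a result cited from the literature). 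You instead work directly with $Q(z) := z^2 + k^2 - G_k(z)^2$ and argue qualitatively: the series expansion gives $Q(z) = z^2/(k+1) + O(z^4) > 0$ near the origin (your expansion $G_k(z) = k + \tfrac{z^2}{2(k+1)} + O(z^4)$ is right, and this is indeed the delicate point, since $Q(0^+)=0$ when $k\ge 1$), and at a putative first zero $z_0$ of $Q$ the Riccati identity forces $G_k'(z_0) = 0$, hence $Q'(z_0) = 2z_0 - 2G_k(z_0)G_k'(z_0) = 2z_0 > 0$, contradicting $Q'(z_0)\le 0$ at a point where $Q$ reaches $0$ from strictly positive values. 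This barrier argument is sound: $Q$ is smooth because $I_k$ is analytic and positive on $\intervaloo{0,+\infty}$, so the zero set, if nonempty, is closed and has a smallest element. What each approach buys: yours is entirely self-contained---no Tur\'an inequality, no external references---whereas the paper's detour through $u = I_{k+1}/I_k$ produces the quantitative estimate of Lemma~\ref{lextLaforgia} (namely $u^2 + \tfrac{2k}{z}u - 1 < 0$, in particular $u<1$), which is not only what closes this lemma but is reused later, in the proof of Theorem~\ref{thm:alpha kl}. Your argument proves the monotonicity cleanly but does not yield that reusable inequality.
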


\medbreak
\begin{remark}
  \label{rem g_k}
  Observe that, by \eqref{A:4I},
  \begin{equation}
    \label{equ g_k}
    G_k(z) = \frac{z I'_k(z)}{I_k(z)}= \frac{z I_{k+1}(z)}{I_k(z)}+k.
  \end{equation}
  Hence by \cite[Theorem 1.1]{Laforgia-Natalini:10} we have that
  \begin{equation*}
    G_k(z)> -1+\sqrt{(k+1)^2+z^2}.
  \end{equation*}
  This implies in particular that
  \begin{equation*}
    \lim_{z\to+\infty} G_k(z)=+\infty.
  \end{equation*}
\end{remark}

\begin{proof}
  Direct calculations using the differential equation satisfied by
  modified Bessel functions \eqref{A:6I} yield
  \begin{equation*}
    G_k'(z) = \frac{(z^2+k^2) I_k^2(z)-z^2 (I'_k(z))^2}{zI_k^2(z)}.
  \end{equation*}
  Then using the recurrence relation \eqref{A:4I}, we deduce that
  \begin{equation*}
    G_k'(z)
    = \frac{z^2(I_k^2(z)-I_{k+1}^2(z))-2k z I_k(z) I_{k+1}(z)}{z I_k^2(z)}.
  \end{equation*}
  Hence with the notation $u=\frac{ I_{k+1}(z)}{ I_{k}(z)}$ from
  Lemma~\ref{lextLaforgia} (in the appendix), we have
  \begin{equation}
    \label{derivee de g}
    G_k'(z) = -z \Bigl( u^2 + \frac{2k}{z}u - 1 \Bigr),
  \end{equation}
  and then Lemma~\ref{lextLaforgia} implies that $G_k'(z)>0$.
\end{proof}

\begin{theorem}
  \label{thm:alpha kl}
  For all $k\in \IN$ and $\kappa>0$, the roots of~$F_k$ (defined
  by~\eqref{Dbis}\/) are simple and can be ordered
  as an increasing sequence
  $\alpha_{k,\ell} = \alpha_{k,\ell}(\kappa) > 0$, with
  $\ell\in \IN^*$, such that
  \begin{gather*}
    \forall \ell > 0,\quad
    j_{k,\ell}<\alpha_{k,\ell}<  j_{k+1,\ell}.
  \end{gather*}
  Each $\ell > 0$ gives rise to the eigenvalue
  \begin{equation}
    \label{rel_lambda_alpha}
    \lambda_{k,\ell} = \alpha_{k,\ell}^2 -
    \frac{\kappa^2}{\alpha_{k,\ell}^2}  \, ,
  \end{equation}
  of~\eqref{serge5}
  and to corresponding eigenfunctions of the form
  $R_{k,\ell}(r)\e^{\pm\i\, k\theta}$ with
  \begin{equation*}
    R_{k,\ell}(r) = cJ_k(\alpha_{k,\ell}\, r)
    + d I_k \Bigl(\frac{\kappa}{\alpha_{k,\ell}} \, r\Bigr),
  \end{equation*}
  where $(c, d)$ is a solution
  to~\eqref{eq:R bd cond} with $\alpha=\alpha_{k,\ell}$.
 \end{theorem}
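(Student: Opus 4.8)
The plan is to reduce the transcendental equation \eqref{Dbis} to the equality of two logarithmic derivatives and then exploit the monotonicity information coming from Lemma~\ref{cserge1} and Remark~\ref{rem g_k}. The last assertions of the theorem (the value of $\lambda_{k,\ell}$ in \eqref{rel_lambda_alpha} and the form of the eigenfunctions with their coefficients solving \eqref{eq:R bd cond}) are nothing but a restatement of Proposition~\ref{eigenfunctions}, so all the work lies in locating the roots of $F_k$ and proving their simplicity. First I would factor, for $z=\kappa/\alpha$,
\begin{equation*}
  F_k(\alpha)
  = J_{|k|}(\alpha)\, I_{|k|}(z)
    \bigl[\, G_k(z) - g_k(\alpha) \,\bigr],
  \qquad
  g_k(\alpha) := \frac{\alpha J_{|k|}'(\alpha)}{J_{|k|}(\alpha)},
\end{equation*}
where $G_k(z)=zI_k'(z)/I_k(z)$ is the function of Lemma~\ref{cserge1}; the term $\frac{\kappa}{\alpha}I_k'(z)/I_k(z)$ is exactly $G_k(z)$. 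Since $I_k>0$ on $\intervaloo{0,\infty}$ and since at a zero of $J_{|k|}$ one has $F_k(\alpha)=-\alpha I_{|k|}(z)J_{|k|}'(\alpha)\ne 0$ (the zeros of $J_{|k|}$ being simple), the roots of $F_k$ are precisely the points where $J_{|k|}(\alpha)\ne 0$ and $g_k(\alpha)=\Phi(\alpha)$, with $\Phi(\alpha):=G_k(\kappa/\alpha)$.

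Next I would localize these roots. Writing $g_k(\alpha)=k-\alpha J_{|k|+1}(\alpha)/J_{|k|}(\alpha)$ via the recurrence $\alpha J_k'=kJ_k-\alpha J_{k+1}$, one sees that $g_k(\alpha)>k$ exactly when $J_{|k|}$ and $J_{|k|+1}$ have opposite signs. By the interlacing $j_{k,\ell}<j_{k+1,\ell}<j_{k,\ell+1}$ this happens precisely on the intervals $\intervaloo{j_{k,\ell},\,j_{k+1,\ell}}$, $\ell\ge1$. On the other hand, Remark~\ref{rem g_k} gives $G_k>k$ on $\intervaloo{0,\infty}$, hence $\Phi>k$ everywhere; therefore every root must lie in one such interval. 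On $\intervaloo{j_{k,\ell},\,j_{k+1,\ell}}$ the quantity $g_k$ tends to $+\infty$ as $\alpha\to j_{k,\ell}^+$ and to $k$ as $\alpha\to j_{k+1,\ell}^-$, while $\Phi$ stays finite and $>k$; the intermediate value theorem then yields at least one root in each interval and none elsewhere.

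For simplicity and uniqueness I would argue by transversality at the crossings. The Riccati identity attached to the Bessel equation gives $g_k'(\alpha)=-\bigl(\alpha^2-k^2+g_k(\alpha)^2\bigr)/\alpha$, while Lemma~\ref{cserge1} rewrites as $G_k'(z)=\bigl(z^2+k^2-G_k(z)^2\bigr)/z$, so that $\Phi'(\alpha)=-\bigl(\kappa^2/\alpha^2+k^2-G_k(\kappa/\alpha)^2\bigr)/\alpha$. Since $F_k'(\alpha)=J_{|k|}(\alpha)I_{|k|}(z)\bigl(\Phi'(\alpha)-g_k'(\alpha)\bigr)$ at a root, a root is simple as soon as $g_k'\ne\Phi'$ there; and at a root, writing $G:=g_k(\alpha)=G_k(\kappa/\alpha)>k$,
\begin{equation*}
  \alpha\bigl(g_k'(\alpha)-\Phi'(\alpha)\bigr)
  = -\Bigl(\alpha^2-\tfrac{\kappa^2}{\alpha^2}-2k^2+2G^2\Bigr).
\end{equation*}
The remaining point is to show the bracket is positive. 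When $\lambda=\alpha^2-\kappa^2/\alpha^2\ge0$ this is immediate from $G>k$. When $\lambda<0$ I would invoke the sharper bound $G>-1+\sqrt{(k+1)^2+\kappa^2/\alpha^2}$ of Remark~\ref{rem g_k}: for $k\ge1$ the elementary estimate $\sqrt{(k+1)^2+s}\le (k+1)+s/(2(k+1))$ reduces the claim to $s\bigl(1-\tfrac{2}{k+1}\bigr)+\alpha^2\ge 0$, which holds; for $k=0$ one uses instead $4+s-4\sqrt{1+s}\ge-1$ together with $\alpha^2>j_{0,1}^2>1$. This last case, $k=0$ with $\lambda<0$, is the genuinely delicate one and I expect it to be the main obstacle, since there the naive bound $G>k=0$ is useless and the sign is recovered only by combining the Laforgia–Natalini estimate with the location $\alpha>j_{0,1}$.

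Having established $g_k'<\Phi'<0$ at every crossing, the function $g_k-\Phi$ is strictly decreasing through each of its zeros on $\intervaloo{j_{k,\ell},\,j_{k+1,\ell}}$; two such zeros would force a sign change with nonnegative derivative in between, a contradiction. Hence there is exactly one root $\alpha_{k,\ell}\in\intervaloo{j_{k,\ell},\,j_{k+1,\ell}}$ in each interval, it is simple, and the chain $\alpha_{k,\ell}<j_{k+1,\ell}<j_{k,\ell+1}<\alpha_{k,\ell+1}$ gives the increasing ordering and the bounds $j_{k,\ell}<\alpha_{k,\ell}<j_{k+1,\ell}$. Finally, Proposition~\ref{eigenfunctions} attaches to each $\alpha_{k,\ell}$ the eigenvalue \eqref{rel_lambda_alpha} and the stated eigenfunctions $R_{k,\ell}(r)\e^{\pm\i k\theta}$, completing the proof.
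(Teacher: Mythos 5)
Your proof is correct, and its skeleton coincides with the paper's: factor out $J_k(\alpha)\,I_k(\kappa/\alpha)$ so that the roots of $F_k$ become the solutions of $G_k(\kappa/\alpha)=H_k(\alpha)$ (your $g_k$ is the paper's $H_k$, and your $g_k-\Phi$ is, up to sign, the paper's $\tilde F_k$), localize one root per interval $\intervaloo{j_{k,\ell},\,j_{k+1,\ell}}$ by the intermediate value theorem, and obtain uniqueness and simplicity by showing that the derivative of $g_k-\Phi$ has a fixed sign at every root. The differences are in two places, and both of your variants are sound. For localization, your observation that $g_k>k$ exactly where $J_kJ_{k+1}<0$ while $\Phi>k$ everywhere (Remark~\ref{rem g_k}) excludes roots outside $\bigcup_\ell\intervaloo{j_{k,\ell},\,j_{k+1,\ell}}$ in one stroke; the paper instead checks signs and limits of $\tilde F_k$ at the endpoints and removes extraneous roots only through the uniqueness step on $\intervaloo{j_{k,\ell},\,j_{k,\ell+1}}$. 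For the decisive transversality inequality, note that your bracket $\lambda-2k^2+2G^2$ equals $2z^2\bigl(u^2+\tfrac{2ku}{z}+\tfrac{(\alpha^*)^2}{2z^2}-\tfrac12\bigr)$ with $G=zu+k$, i.e.\ exactly the quantity the paper must make positive; the paper does it via the Laforgia--Natalini estimate $u^2+\tfrac{2(k+1)}{z}u-1>0$ combined with $u<1$ (Lemma~\ref{lextLaforgia}) and the bound $j_{0,1}\ge\tfrac{3\pi}{4}>2$, whereas you use the other Laforgia--Natalini input already quoted in Remark~\ref{rem g_k}, namely $G_k(z)>-1+\sqrt{(k+1)^2+z^2}$, plus elementary algebra: trivial when $\lambda\ge0$ since $G>k$; for $\lambda<0$ and $k\ge1$ via $\sqrt{(k+1)^2+s}\le(k+1)+\tfrac{s}{2(k+1)}$; for $\lambda<0$ and $k=0$ via $\alpha^2+z^2+4-4\sqrt{1+z^2}=\alpha^2-1+\bigl(\sqrt{1+z^2}-2\bigr)^2>0$, which only needs $j_{0,1}>1$. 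I verified all three cases. What each buys: your route is more self-contained on the Bessel-zero side (the crude bound $j_{0,1}>1$ suffices, no appeal to Hethcote's $j_{0,1}\ge 3\pi/4$) and correctly isolates $k=0$, $\lambda<0$ as the delicate case; the paper's single chain of inequalities treats all $k$ and both signs of $\lambda$ uniformly, at the cost of needing $u<1$ and the sharper localization $\alpha^*>2$.
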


\begin{proof}
  Since $I_k$ is positive on $\intervaloo{0, +\infty}$ and the
  positive roots of $J_k$ are simple, the positive roots of $F_k$
  never coincide with those of $J_k$.
  Hence, we can write
  \begin{equation*}
    F_k(\alpha) = J_k(\alpha) \, I_k\Bigl(\frac{\kappa}{\alpha}\Bigr)
    \, \tilde F_k(\alpha)
  \end{equation*}
  with
  \begin{equation*}
    \tilde F_k(\alpha)
    := G_k\Bigl(\frac{\kappa}{\alpha}\Bigr)- H_k(\alpha)
  \end{equation*}
  where $G_k$ was defined in Lemma~\ref{cserge1} and
  $H_k(z) := \frac{z J'_k(z)}{J_k(z)}$ (see \cite[Lemma~4.1]{DNT1}) and
  we have
  \begin{equation*}
    F_k(\alpha) =0 \qquad \Leftrightarrow \qquad
    \tilde F_k(\alpha)=0.
  \end{equation*}
  Using formulas~\eqref{A:4} and~\eqref{A:4I}, $\tilde F_k$ may also
  be written as
  \begin{equation}
    \label{eq:Fk-succ}
    \tilde F_k(\alpha)
    = \frac{\kappa}{\alpha} \,
    \frac{I_{k+1}\bigl(\frac{\kappa}{\alpha}\bigr)}{
      I_k\bigl(\frac{\kappa}{\alpha}\bigr)}
    + \alpha \frac{J_{k+1}(\alpha)}{J_k(\alpha)}.
  \end{equation}
  Observe first that if $\alpha \in \intervaloo{0, j_{k,1}}$, then
  $J_k(\alpha) > 0$ and $J_{k+1}(\alpha) > 0$ and so $\tilde
  F_k(\alpha) > 0$.  Thus the roots of $\tilde F_k$ lie in
  $\intervaloo{j_{k,1}, +\infty}$.
  Easy computations show that, for all $\ell \ge 1$,
  \begin{equation*}
    \lim_{\alpha\xrightarrow{>} j_{k,\ell}} \tilde F_k(\alpha)
    = -\infty
    \qquad \text{and} \qquad
    \tilde F_k(j_{k+1,\ell})
    =\frac{\kappa}{j_{k+1,\ell}} \,
    \frac{I_{k+1}\bigl(\frac{\kappa}{j_{k+1,\ell}}\bigr)}{
      I_k\bigl(\frac{\kappa}{j_{k+1,\ell}}\bigr)}
    > 0
  \end{equation*}
  This implies that $\tilde F_k$ possesses a root between $j_{k,\ell}$ and
  $j_{k,\ell+1}$.

  To establish that it has a unique root in this interval,
  it is enough to prove that, for every root $\alpha^*$,
  we have $\tilde F_k'(\alpha^*)>0$.
  Direct calculations yield
  \begin{equation}
    \label{deriveealpha}
    \tilde F_k'(\alpha)
    = - \frac{\kappa}{\alpha^2}
    G'_k\Bigl(\frac{\kappa}{\alpha}\Bigr) - H'_k(\alpha).
  \end{equation}
  First, let us use \cite[Lemma~4.1]{DNT1} which states that
  \begin{equation*}
    H'_k(z) = \frac{k^2-z^2}{z}-z \frac{(J_k'(z))^2}{J_k(z)^2},
  \end{equation*}
  and, using \eqref{A:4}, we obtain
  \begin{equation*}
    H'_k(z)
    = -z+2k \frac{J_{k+1}(z)}{J_k(z)}-z\frac{J_{k+1}^2(z)}{J_k^2(z)}.
  \end{equation*}
  Now using that $\alpha^*$ is a root of $\tilde F_k$ and remembering
  \eqref{eq:Fk-succ}, we have
  \begin{equation*}
    \frac{J_{k+1}(\alpha^*)}{J_k(\alpha^*)}=-\frac{\kappa}{(\alpha^*)^2} u,
  \end{equation*}
  where we have set $u := \frac{I_{k+1}(z)}{I_{k}(z)}$ with
  $z = \frac{\kappa}{\alpha^*}$.  These two identities show that
  \begin{equation*}
    H'_k(\alpha^*)
    = -\alpha^* -2k\frac{\kappa}{(\alpha^*)^2} u
    - \frac{\kappa^2}{(\alpha^*)^3} u^2.
  \end{equation*}
  Using this identity together with \eqref{derivee de g} in
  \eqref{deriveealpha}, we get
  \begin{equation*}
    \begin{aligned}
      \tilde F_k'(\alpha^*)
      &= \frac{\kappa^2}{(\alpha^*)^3}
      \Bigl(u^2 + \frac{2k\alpha^*}{\kappa}u - 1 \Bigr)
      + \alpha^* + 2k\frac{\kappa}{(\alpha^*)^2} u
      + \frac{\kappa^2}{(\alpha^*)^3} u^2\\
      &= \frac{2\kappa^2}{(\alpha^*)^3}
      \Bigl(u^2 + \frac{2k \alpha^*}{\kappa} u
      + \frac{(\alpha^*)^4-\kappa^2}{2\kappa^2} \Bigr)\\
      &= \frac{2\kappa^2}{(\alpha^*)^3}
      \Bigl( u^2 + \frac{2k u}{z} + \frac{(\alpha^*)^2}{2z^2}
      - \frac12 \Bigr)
      \qquad\text{with } z = {\kappa}/{\alpha^*}.
    \end{aligned}
  \end{equation*}
  Hence to show that $\tilde F_k'(\alpha^*)$ is positive, it remains
  to prove that
  \begin{equation}
    \label{colette}
    u^2 + \frac{2k u}{z} + \frac{(\alpha^*)^2}{2z^2} - \frac12
    > 0.
  \end{equation}
  But the estimate (2.4) of \cite{Laforgia-Natalini:10} says that
  \begin{equation*}
    u^2 + \frac{2(k+1)}{z}u - 1 > 0,  
  \end{equation*}
  therefore
  \begin{equation*}
    u^2 +\frac{2k u}{z} + \frac{(\alpha^*)^2}{2z^2} - \frac12
    > \frac12-\frac{2 u}{z}+\frac{(\alpha^*)^2}{2z^2}=
    \frac{1}{2z^2} \bigl(z^2-4uz+(\alpha^*)^2 \bigr).    
  \end{equation*}
  Since $\alpha^* > j_{k,1} > j_{0,1} \ge \frac{3\pi}{4}>2$
  (see \cite[Theorem~3]{Hethcote:69} for the estimate on $j_{0,1}$)
  and, by
  Lemma~\ref{lextLaforgia}, $u<1$ for $k \ge 0$ and $z>0$, we deduce that
  \begin{equation*}
    u^2+\frac{2k u}{z}+\frac{(\alpha^*)^2}{2z^2}-\frac12
    > \frac{1}{2z^2}(z-2)^2
    \ge 0.    
  \end{equation*}
  This implies that, for every root $\alpha^*$, we have
  $\tilde F_k'(\alpha^*)>0$ and prove the uniqueness of the root of
  $\tilde F_k$ in $\intervaloo{j_{k,\ell}, j_{k,\ell+1}}$. This
  concludes the proof.
\end{proof}

We now give some further informations on the functions
$\alpha_{k,\ell}$.

\begin{lemma}
  \label{alpha decrease}%
  For all $k \in \IN$ and $\ell \in \IN^*$, the function
  $\alpha_{k,\ell}: \intervaloo{0, +\infty} \to \IR : \kappa \mapsto
  \alpha_{k,\ell}(\kappa)$
  is of class $\C^1$ and $\partial_\kappa \alpha_{k,\ell} < 0$.
\end{lemma}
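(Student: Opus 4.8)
The plan is to view $\alpha_{k,\ell}(\kappa)$ as a function implicitly defined by the equation $\tilde F_k(\alpha, \kappa) = 0$, where I now make the dependence on $\kappa$ explicit by writing
\begin{equation*}
  \tilde F_k(\alpha, \kappa)
  = G_k\Bigl(\frac{\kappa}{\alpha}\Bigr) - H_k(\alpha),
\end{equation*}
and then to apply the implicit function theorem. First I would check that, near any root $\alpha^* = \alpha_{k,\ell}(\kappa)$, the function $\tilde F_k$ is of class $\C^1$ (in fact smooth) jointly in $(\alpha, \kappa)$: the modified Bessel function $I_k$ is positive on $\intervaloo{0, +\infty}$, so $G_k$ is smooth there, while $H_k$ is smooth away from the zeros of $J_k$, and by the proof of Theorem~\ref{thm:alpha kl} the roots of $\tilde F_k$ never coincide with those of $J_k$.

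Next, the hypotheses of the implicit function theorem are already available. Theorem~\ref{thm:alpha kl} shows that every root $\alpha^*$ is simple, that is $\partial_\alpha \tilde F_k(\alpha^*, \kappa) = \tilde F_k'(\alpha^*) > 0$; in particular this partial derivative is nonzero. Hence the implicit function theorem guarantees that $\kappa \mapsto \alpha_{k,\ell}(\kappa)$ is of class $\C^1$ and that
\begin{equation*}
  \partial_\kappa \alpha_{k,\ell}
  = - \frac{\partial_\kappa \tilde F_k(\alpha_{k,\ell}, \kappa)}{
      \partial_\alpha \tilde F_k(\alpha_{k,\ell}, \kappa)}.
\end{equation*}

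It then remains only to determine the sign of the numerator. Since the term $H_k(\alpha)$ is independent of $\kappa$, differentiating gives
\begin{equation*}
  \partial_\kappa \tilde F_k(\alpha, \kappa)
  = \frac{1}{\alpha} \, G_k'\Bigl(\frac{\kappa}{\alpha}\Bigr),
\end{equation*}
which is strictly positive because $\alpha > 0$ and, by Lemma~\ref{cserge1}, $G_k' > 0$ on $\intervaloo{0, +\infty}$. Combined with $\partial_\alpha \tilde F_k(\alpha_{k,\ell}, \kappa) > 0$, this yields $\partial_\kappa \alpha_{k,\ell} < 0$, as claimed. I do not expect a genuine obstacle here beyond verifying the regularity of $\tilde F_k$ at the roots; the monotonicity itself is transparent, since increasing $\kappa$ increases the argument $\kappa/\alpha$ of the increasing function $G_k$, forcing $\alpha$ to decrease in order to keep $\tilde F_k$ equal to zero.
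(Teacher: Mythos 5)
Your proposal is correct and takes essentially the same route as the paper: apply the Implicit Function Theorem to $\tilde F_k(\alpha,\kappa)$, using the fact that $\partial_\alpha \tilde F_k > 0$ at every root (already established in the proof of Theorem~\ref{thm:alpha kl}) and that $\partial_\kappa \tilde F_k(\alpha,\kappa) = \frac{1}{\alpha}\,G_k'\bigl(\frac{\kappa}{\alpha}\bigr) > 0$ by Lemma~\ref{cserge1}. The only detail the paper makes explicit that you leave implicit is the identification step: the Implicit Function Theorem yields a local $\C^1$ solution curve, and one must check it coincides with $\alpha_{k,\ell}$ near $\kappa^*$, which follows from the uniqueness of the root of $\tilde F_k$ in $\intervaloo{j_{k,\ell}, j_{k,\ell+1}}$ together with continuity of that curve.
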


\begin{proof}
  Let us note $F_k(\alpha, \kappa)$ the function $F_k(\alpha)$ defined
  by~\eqref{Dbis} where we have explicited the dependence on $\kappa$.
  The assertion will result from the Implicit Function Theorem.  Let
  us fix $k \in \IN$, $\kappa^* > 0$ and
  $\alpha^* = \alpha_{k,\ell}(\kappa^*) > 0$.

  Again as in the proof of Theorem \ref{thm:alpha kl}, instead of
  working on $F_k(\alpha, \kappa)$, we consider the function
  $\tilde F_k(\alpha, \kappa)$.  In the proof of
  Theorem~\ref{thm:alpha kl}, we already observed that
  \begin{equation*}
    \partial_\alpha \tilde F_k(\alpha^*, \kappa^*) > 0.    
  \end{equation*}
  Hence we can apply the Implicit Function Theorem to $\tilde F_k$ and
  there exists a $\C^1$ curve $\beta_{\ell}$ defined around $\kappa^*$
  such that, in a neighbourhood $V$ of $(\alpha^*,\kappa^*)$,
  \begin{equation*}
    F_k(\alpha,\kappa)=0 \text{ if and only if }\alpha
    = \beta_{\ell}(\kappa).
  \end{equation*}
  Moreover, using Lemma~\ref{cserge1}, it is easily seen that
  \begin{equation*}
    \partial_\kappa \tilde F_k(\alpha^*, \kappa^*) > 0
  \end{equation*}
  and so
  \begin{equation*}
    \partial_\kappa \beta_{\ell}(\kappa^*) =
    - \frac{\partial_\kappa \tilde F_k(\alpha^*, \kappa^*)}{
      \mathstrut\partial_\alpha \tilde F_k(\alpha^*, \kappa^*)}
    < 0.
  \end{equation*}
  As, for all $\kappa$, $\alpha_{k,\ell}(\kappa)$ is the only root of
  $\tilde F_k$ in $\intervaloo{j_{k,\ell}, j_{k,\ell+1}}$, we have,
  for all $ \kappa \in V$,
  \begin{equation*}
    \beta_\ell(\kappa) = \alpha_{k,\ell} (\kappa),
  \end{equation*}
  whence the desired result.
\end{proof}

\begin{lemma}
  \label{kappa->0}
  Let $k \in \IN$ and $\ell > 0$. Then we have
  \begin{equation*}
    \lim_{\kappa \to 0}\alpha_{k,\ell}(\kappa) = j_{k+1,\ell}
    \qquad \text{while}\qquad
    \lim_{\kappa \to \infty}\alpha_{k,\ell}(\kappa) = j_{k,\ell}.
  \end{equation*}
\end{lemma}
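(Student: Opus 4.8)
The plan is to exploit the monotonicity already established in Lemma~\ref{alpha decrease} to guarantee that the two one-sided limits exist, and then to pass to the limit in the characteristic equation written in the convenient form~\eqref{eq:Fk-succ}. Fix $k\in\IN$ and $\ell\ge 1$. By Theorem~\ref{thm:alpha kl} the whole curve $\kappa\mapsto\alpha_{k,\ell}(\kappa)$ stays in the fixed interval $\intervaloo{j_{k,\ell},\,j_{k+1,\ell}}$, and by Lemma~\ref{alpha decrease} it is strictly decreasing. Hence the limits
\begin{equation*}
  L_0:=\lim_{\kappa\to 0^+}\alpha_{k,\ell}(\kappa)\in\intervaloc{j_{k,\ell},\,j_{k+1,\ell}},
  \qquad
  L_\infty:=\lim_{\kappa\to\infty}\alpha_{k,\ell}(\kappa)\in\intervalco{j_{k,\ell},\,j_{k+1,\ell}}
\end{equation*}
both exist, and it remains only to identify them. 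Throughout I use that, since $L_0,L_\infty\in\intervalcc{j_{k,\ell},\,j_{k+1,\ell}}$ and $j_{k+1,\ell}<j_{k,\ell+1}$, the number $J_k(L)$ can vanish only when $L=j_{k,\ell}$.

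For $\kappa\to 0^+$, I would first note that $\alpha_{k,\ell}(\kappa)\ge j_{k,\ell}>0$ keeps the argument $z=\kappa/\alpha_{k,\ell}(\kappa)$ below $\kappa/j_{k,\ell}$, so $z\to 0$. Since $I_{k+1}(z)/I_k(z)\sim z/\bigl(2(k+1)\bigr)$ as $z\to 0$ (equivalently $G_k(z)\to k$), the first term of~\eqref{eq:Fk-succ}, which equals $G_k(z)-k$, tends to $0$. Letting $\kappa\to 0$ in $\tilde F_k(\alpha_{k,\ell}(\kappa))=0$, with $\alpha_{k,\ell}(\kappa)\to L_0$ and $J_k(L_0)\ne 0$ (because $L_0>j_{k,\ell}$ by the strict monotonicity), I obtain $L_0\,J_{k+1}(L_0)/J_k(L_0)=0$, i.e.\ $J_{k+1}(L_0)=0$. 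By the interlacing of the zeros of $J_k$ and $J_{k+1}$, namely $j_{k,\ell}<j_{k+1,\ell}<j_{k,\ell+1}$, the only zero of $J_{k+1}$ in $\intervaloc{j_{k,\ell},\,j_{k+1,\ell}}$ is $j_{k+1,\ell}$; hence $L_0=j_{k+1,\ell}$.

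For $\kappa\to\infty$, the argument $z=\kappa/\alpha_{k,\ell}(\kappa)\ge\kappa/j_{k+1,\ell}$ now tends to $+\infty$, so by Remark~\ref{rem g_k} the first term of~\eqref{eq:Fk-succ}, equal to $G_k(z)-k$, tends to $+\infty$. Consequently the second term must satisfy $\alpha_{k,\ell}(\kappa)\,J_{k+1}(\alpha_{k,\ell}(\kappa))/J_k(\alpha_{k,\ell}(\kappa))\to-\infty$. If $L_\infty>j_{k,\ell}$ held, then $J_k(L_\infty)\ne 0$ and this quotient would converge to the finite value $L_\infty\,J_{k+1}(L_\infty)/J_k(L_\infty)$, a contradiction. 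Therefore $L_\infty=j_{k,\ell}$.

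The routine computations—the small- and large-argument behaviour of $I_{k+1}/I_k$, the former being elementary from the power series of the modified Bessel functions and the latter supplied directly by Remark~\ref{rem g_k}—present no difficulty; the genuinely load-bearing steps are the two that pin down which Bessel zero is attained. These rely on (i) the confinement $j_{k,\ell}<\alpha_{k,\ell}<j_{k+1,\ell}$, which both forces $z\to 0$ (resp.\ $z\to\infty$) and guarantees that $J_k$ does not vanish at an interior limit, and (ii) the interlacing property $j_{k,\ell}<j_{k+1,\ell}<j_{k,\ell+1}$, which singles out $j_{k+1,\ell}$ as the unique admissible zero of $J_{k+1}$. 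I expect the only point requiring mild care to be the verification that $J_k(L_0)\ne 0$, so that the division in~\eqref{eq:Fk-succ} is legitimate; this is precisely where the strict inequality $L_0>j_{k,\ell}$ coming from monotonicity is used.
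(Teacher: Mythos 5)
Your proposal is correct and follows essentially the same route as the paper: existence of the two limits from monotonicity (Lemma~\ref{alpha decrease}) and the confinement $j_{k,\ell}<\alpha_{k,\ell}<j_{k+1,\ell}$, then passage to the limit in $\tilde F_k=0$ via~\eqref{eq:Fk-succ}, with the modified-Bessel term vanishing as $\kappa\to 0$ and $G_k\to+\infty$ (Remark~\ref{rem g_k}) forcing $J_k$ to vanish at the limit as $\kappa\to\infty$. Your explicit verification that $J_k(L_0)\neq 0$ before dividing is a detail the paper leaves implicit, but the argument is otherwise identical.
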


\begin{proof}
  As $\alpha_{k,\ell}$ is decreasing and bounded
  (for all $\kappa > 0$, $j_{k,\ell} < \alpha_{k,\ell}(\kappa) <
  j_{k+1,\ell}$), these two limits exist. Let
  us denote
  \begin{equation*}
    \alpha_0 := \lim_{\kappa \to 0}\alpha_{k,\ell}(\kappa)
    \qquad \text{and}\qquad
    \alpha_{\infty} := \lim_{\kappa \to \infty}\alpha_{k,\ell}(\kappa).
  \end{equation*}
  It remains to find their value.  Recall that, for all $z>0$, we have
  $0<\frac{I_{k+1}(z)}{I_k(z)}<1$ and hence, formula
  \eqref{eq:Fk-succ} implies
  \begin{align*}
    0 &= \lim_{\kappa\to 0}
        \tilde F_k\bigl(\alpha_{k,\ell}(\kappa)\bigr)\\
      &= \lim_{\kappa\to 0}
        \biggl[\frac{\kappa}{\alpha_{k,\ell}(\kappa)}
        \frac{I_{k+1}\bigl(\frac{\kappa}{\alpha_{k,\ell}(\kappa)}\bigr)}{
        I_k\bigl(\frac{\kappa}{\alpha_{k,\ell}(\kappa)}\bigr)}
        + \alpha_{k,\ell}(\kappa)
        \frac{J_{k+1}(\alpha_{k,\ell}(\kappa))}{
        J_k(\alpha_{k,\ell}(\kappa))}\biggr] \\
      &= \alpha_0 \frac{J_{k+1}(\alpha_0)}{J_k(\alpha_0)}.    
  \end{align*}
  This implies that $\alpha_0= j_{k+1,\ell}$.
  \medbreak

  On the other hand, Remark \ref{rem g_k} and $\alpha_{k,\ell} >
  j_{k,\ell}$ imply
  \begin{equation*}
    \lim_{\kappa\to\infty}
    G_k\Bigl(\frac{\kappa}{\alpha_{k,\ell}(\kappa)}\Bigr) = +\infty.    
  \end{equation*}
  Given that $0 = \tilde F_k(\alpha_{k,\ell})
  = G_k({\kappa}/{\alpha_{k,\ell}})- H_k(\alpha_{k,\ell})$,
  one has
  \begin{equation*}
    \lim_{\kappa\to\infty} H_k(\alpha_{k,\ell}) = +\infty  
  \end{equation*}
  and hence $J_k(\alpha_{\infty})=0$. This shows that
  $\alpha_{\infty}=j_{k,\ell}$ and concludes the proof.
\end{proof}

\begin{remark}
  Computer generated graphs of the first $\alpha_{k,\ell}$ are drawn
  on Fig.~\ref{fig:graph-alpha}.  From this, one would for instance
  naturally conjecture\footnote{Some of these inequalities are obvious
    from the bounds on $\alpha_{k,\ell}$ that were obtained in
    Theorem~\ref{thm:alpha kl}.} that
  \begin{equation*}
    \forall \kappa > 0,\quad
    \alpha_{0,1} (\kappa) < \alpha_{1,1} (\kappa)
    < \alpha_{2,1} (\kappa) < \alpha_{0,2} (\kappa)
    < \alpha_{1,2} (\kappa) < \alpha_{2,2} (\kappa).
  \end{equation*}
  However, the picture does not stay that simple once
  $\alpha_{k,\ell}$ with higher indices are drawn: many crossings
  appear.  As an example, we have drawn graph of $\alpha_{5,1}$ which
  crosses the one of $\alpha_{0,3}$ for $\kappa \approx 44.616$,
  thereby generating an eigenspace of higher dimension.
  Nevertheless we can prove the following result concerning the first
  eigenvalue.
\end{remark}

\begin{figure}[htb]
  \centering
  \newcommand{\alphamin}{1}%
  \begin{tikzpicture}[x=2pt, y=14pt]
    \draw[->] (0,\alphamin) -- (103, \alphamin) node[below]{$\kappa$};
    \draw[->] (0,\alphamin) -- (0, 12.5) node[left]{$\alpha$};
    \begin{scope}[color=gray!40]
      \draw[fill] (44.616, 9.79106) circle(1.5pt);
      \draw[<-] (46, 9.9) -- ++(10, 0.4)
      node[right]{$\alpha_{0,3} = \alpha_{5,1}$};
    \end{scope}    
    \foreach \k/\l/\xy/\c in {
      2/1/{50,5.2}/darkorange, 2/2/{50,9}/darkorange,
      0/1/{20,2.3}/darkred, 0/2/{20,6.2}/darkred, 0/3/{20, 10.4}/darkred,
      1/1/{20,4.1}/darkblue, 1/2/{20,7.8}/darkblue, 1/3/{20,11.9}/darkblue,
      5/1/{20,9.4}/darkgreen}{
      \draw[color=\c] plot file{\graphpath{alpha\k-\l.dat}};
      \node[color=\c] at (\xy) {$\alpha_{\k,\l}$};
    }
    \foreach \kl/\y\yshift in {{0,1}/2.4048256/0, {1,1}/3.8317/0,
      {0,2}/5.520078/2pt,  {1,2}/7.0155867/0,  {0,3}/8.6537279/3pt,
      {1,3}/10.173468/0, {2,1}/5.1356223/-2pt,  {2,2}/8.4172441/-3pt,
      {2,3}/11.619841/0}{
      \draw (3pt, \y) -- (-3pt, \y)
      node[left,yshift=\yshift]{\scriptsize $j_{\kl}$};
    }
  \end{tikzpicture}
  \vspace{-2ex}
  \caption{Graph of some $\alpha_{k,\ell}$.}
  \label{fig:graph-alpha}
\end{figure}
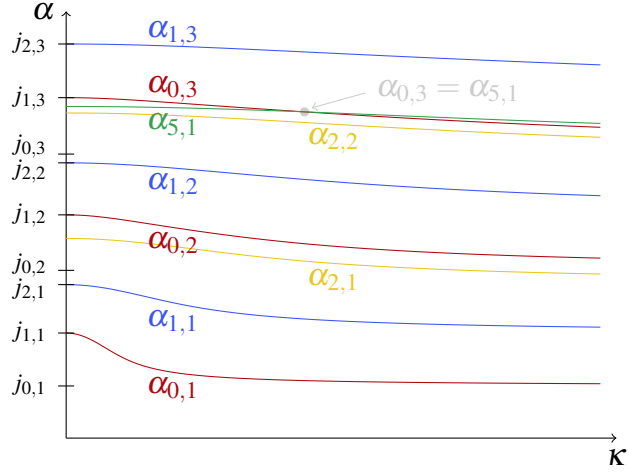

\begin{theorem}
  \label{thm:lambda1}
  The first eigenvalue of \eqref{serge5} is simple and given by
  \begin{equation}
    \label{lambda_1}
    \lambda_1 = \alpha_{0,1}^2 -
    \frac{\kappa^2}{\alpha_{0,1}^2}
  \end{equation}
  with corresponding eigenfunctions given by
  \begin{equation}
    \label{eq:R01}
    R_{0,1}(r)
    = c\, J_0(\alpha_{0,1}\, r)
    + d\, I_0\Bigl(\frac{\kappa}{\alpha_{0,1}} \, r \Bigr),
  \end{equation}
  where $(c, d)$ is a nontrivial solution to~\eqref{eq:R bd cond} with
  $\alpha=\alpha_{0,\ell}$.  Moreover $\lambda_1$ is a decreasing
  function of $\kappa$,
  \begin{equation*}
    \lim_{\kappa\to0} \lambda_{1}(\kappa) = j_{1,1}^2
    \qquad\text{and}\qquad
    \lim_{\kappa\to\infty} \lambda_{1}(\kappa) = -\infty.
  \end{equation*}
\end{theorem}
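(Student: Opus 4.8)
The plan is to read off every assertion from the facts already established about the numbers $\alpha_{k,\ell}$. The starting point is the observation that, for fixed $\kappa > 0$, the map $g_\kappa : \intervaloo{0,+\infty}\to\IR : t\mapsto t^2 - \kappa^2/t^2$ is strictly increasing, since $g_\kappa'(t) = 2t + 2\kappa^2/t^3 > 0$. By the relation $\lambda_{k,\ell} = g_\kappa(\alpha_{k,\ell})$ of~\eqref{rel_lambda_alpha}, minimising $\lambda_{k,\ell}$ over $(k,\ell)$ is equivalent to minimising $\alpha_{k,\ell}$, and distinct values of $\alpha_{k,\ell}$ yield distinct eigenvalues because $g_\kappa$ is injective.

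The key step is then to prove that $\alpha_{0,1}$ is the strict minimum of $\{\alpha_{k,\ell} : k\in\IN,\ \ell\in\IN^*\}$. I would combine the enclosure $j_{k,\ell} < \alpha_{k,\ell} < j_{k+1,\ell}$ of Theorem~\ref{thm:alpha kl} with the classical interlacing of the zeros of Bessel functions, namely $j_{k,\ell} < j_{k+1,\ell} < j_{k,\ell+1}$. This gives $\alpha_{0,1} < j_{1,1}$, while for any $(k,\ell)\neq(0,1)$ one of two cases occurs: if $\ell = 1$ and $k\geq 1$, then $\alpha_{k,1} > j_{k,1} \geq j_{1,1} > \alpha_{0,1}$; if $\ell\geq 2$, then $\alpha_{k,\ell} > j_{k,\ell}\geq j_{0,2} > j_{1,1} > \alpha_{0,1}$, where $j_{1,1} < j_{0,2}$ is the interlacing relation with $k=0$, $\ell=1$. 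Hence $\alpha_{0,1}$ is strictly smallest, so $\lambda_1 = g_\kappa(\alpha_{0,1})$ is given by~\eqref{lambda_1}, with eigenfunction~\eqref{eq:R01}.

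For simplicity I would argue as follows. By Proposition~\ref{eigenfunctions} and the completeness of $(\e^{\i k\theta})_{k\in\IZ}$, any eigenfunction of~\eqref{serge5} splits into angular modes, each of which is itself an eigenfunction, so the eigenspace of $\lambda_1$ is spanned by the modes $R_{k,\ell}(r)\e^{\pm\i k\theta}$ for which $g_\kappa(\alpha_{k,\ell}) = \lambda_1$. Strict minimality forces $(k,\ell)=(0,1)$, and since $k=0$ there is no $\e^{\pm\i k\theta}$ doubling; the eigenspace is therefore one-dimensional, i.e.\ $\lambda_1$ is simple.

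Finally, monotonicity and the two limits follow by calculus from Lemmas~\ref{alpha decrease} and~\ref{kappa->0}. Writing $\alpha := \alpha_{0,1}(\kappa)$, differentiation gives $\intd\lambda_1/\intd\kappa = \bigl(2\alpha + 2\kappa^2/\alpha^3\bigr)\,\partial_\kappa\alpha - 2\kappa/\alpha^2$, which is strictly negative because $\partial_\kappa\alpha < 0$ by Lemma~\ref{alpha decrease}; note that $\lambda_1$ is $\C^1$ precisely because $\alpha_{0,1}$ is the \emph{strict} minimiser for every $\kappa$, so no mode-crossing occurs and $\lambda_1 = \lambda_{0,1}$ throughout. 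For the limits, Lemma~\ref{kappa->0} gives $\alpha_{0,1}(\kappa)\to j_{1,1}$ as $\kappa\to0$, whence $\lambda_1\to j_{1,1}^2$; and $\alpha_{0,1}(\kappa)\to j_{0,1}$ as $\kappa\to\infty$ while remaining in $\intervaloo{j_{0,1}, j_{1,1}}$, so that $\kappa^2/\alpha_{0,1}^2 \to +\infty$ and hence $\lambda_1\to-\infty$. The main obstacle is the global minimality step of the second paragraph, since it is the only point requiring input beyond the monotonicity and bounds on $\alpha_{k,\ell}$ already in hand, namely the interlacing inequalities for Bessel zeros.
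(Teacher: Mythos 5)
Your proposal is correct and follows essentially the same route as the paper's own proof: exploit the monotonicity of $\alpha \mapsto \alpha^2 - \kappa^2/\alpha^2$, use the localization $j_{k,\ell} < \alpha_{k,\ell} < j_{k+1,\ell}$ of Theorem~\ref{thm:alpha kl} (together with the interlacing~\eqref{A:9}) to see that $\alpha_{0,1}$ is the strict minimum, and then invoke Lemmas~\ref{alpha decrease} and~\ref{kappa->0} for the monotonicity and the limits. The paper compresses all of this into a few lines; your write-up merely makes explicit the case analysis for minimality, the simplicity argument, and the derivative computation that the paper leaves to the reader.
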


\begin{proof}
  First note that the function
  $\intervaloo{0,+\infty} \to \IR : \alpha \mapsto \alpha^2 -
  \kappa^2/\alpha^2$
  is increasing.  The localization of  $\alpha_{k,\ell}$ given in
  Theorem~\ref{thm:alpha kl} readily shows that $\alpha_{0,1}$ is the
  smaller of all $\alpha_{k,\ell}$.
  Formula \eqref{lambda_1} and \eqref{eq:R01} immediately follow.
  The monotonicity comes from Lemma~\ref{alpha decrease} and the
  asymptotic values from Lemma~\ref{kappa->0}.
\end{proof}

\section{Nodal properties of eigenfunctions}
\label{sec:nodal prop}

In this section, we prove that the eigenfunction $R_{0,1}$ is positive
and decreasing, and give precise statements about the change of sign
of the other eigenfunctions.

\begin{theorem}
  \label{R01}
  Let $ \kappa>0$ and $R_{0,1}$ be the function associated with
  $\alpha_{0,1}$ defined by~\eqref{eq:R01} in
  Theorem~\ref{thm:lambda1}.  Then $r \mapsto \abs{R_{0,1}(r)}$
  is positive in $\intervalco{0,1}$ and decreasing.
\end{theorem}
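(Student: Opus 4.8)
The plan is to show that $R := R_{0,1}$ is strictly decreasing on $\intervalcc{0,1}$ with $R(1)=0$, since the positivity and monotonicity of $\abs{R_{0,1}}$ follow at once from this. Write $a := \alpha_{0,1}$ and $b := \kappa/\alpha_{0,1}$, so that $R(r) = c\,J_0(ar) + d\,I_0(br)$ and, reading off~\eqref{eq:R bd cond} with $k=0$, both $R(1)=0$ and $R'(1)=0$. First I would fix the signs of $c$ and $d$. Since $a = \alpha_{0,1} \in \intervaloo{j_{0,1}, j_{1,1}} \subset \intervaloo{j_{0,1}, j_{0,2}}$, we have $J_0(a) < 0$, while $I_0(b) > 0$; the first equation of~\eqref{eq:R bd cond} then forces $c$ and $d$ to share the same sign, so we may assume $c, d > 0$. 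Consequently $R(0) = c + d > 0$, and it remains only to prove that $R'(r) < 0$ for $r \in \intervaloo{0,1}$: this yields strict decrease from $R(0)>0$ to $R(1)=0$, whence $R>0$ on $\intervalco{0,1}$.

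Differentiating and using $J_0' = -J_1$, $I_0' = I_1$ gives $R'(r) = -ca\,J_1(ar) + db\,I_1(br)$. Because $a < j_{1,1}$, the argument $ar$ stays in $\intervaloo{0, j_{1,1}}$ for $r \in \intervaloo{0,1}$, so $J_1(ar) > 0$ there and I can factor $R'(r) = ca\,J_1(ar)\,\bigl(\phi(r) - 1\bigr)$ with $\phi(r) := \frac{db\,I_1(br)}{ca\,J_1(ar)} > 0$. As $ca\,J_1(ar) > 0$, proving $R' < 0$ on $\intervaloo{0,1}$ amounts to proving $\phi < 1$ there. Now $R'(1) = 0$ translates into $\phi(1) = 1$, so it suffices to show that $\phi$ is strictly increasing on $\intervaloo{0,1}$. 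A logarithmic derivative gives $\frac{d}{dr}\log\phi(r) = \frac{1}{r}\bigl(G_1(br) - H_1(ar)\bigr)$, where $G_1(z) = zI_1'(z)/I_1(z)$ is the function of Lemma~\ref{cserge1} and $H_1(z) = zJ_1'(z)/J_1(z)$ is the one from \cite[Lemma~4.1]{DNT1}. Hence the whole statement reduces to the single inequality $G_1(br) > H_1(ar)$ for $r \in \intervaloo{0,1}$.

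This inequality is the crux of the argument and is where the Bessel identities do the work. Using the recurrences~\eqref{A:4} and~\eqref{A:4I}, I would rewrite both quantities in closed form as $G_1(z) = \frac{z I_2(z)}{I_1(z)} + 1$ (this is exactly Remark~\ref{rem g_k} for $k=1$) and $H_1(z) = 1 - \frac{z J_2(z)}{J_1(z)}$. For every $z > 0$ one has $I_1(z), I_2(z) > 0$, so $G_1(z) > 1$; and for $z \in \intervaloo{0, j_{1,1}}$ one has $J_1(z) > 0$ and $J_2(z) > 0$ (as $j_{1,1} < j_{2,1}$), so $H_1(z) < 1$. Since $ar \in \intervaloo{0, j_{1,1}}$ for $r \in \intervaloo{0,1}$, we obtain $G_1(br) > 1 > H_1(ar)$, as required. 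Thus $\phi$ is strictly increasing with $\phi(1) = 1$, giving $\phi < 1$ and $R' < 0$ on $\intervaloo{0,1}$, which finishes the proof. The main obstacle is precisely securing the two-sided bound $G_1 > 1 > H_1$: the easy half, $G_1 > 1$, is immediate from Remark~\ref{rem g_k}, and the delicate half, $H_1 < 1$, is settled cleanly by the closed form $H_1(z) = 1 - zJ_2(z)/J_1(z)$ once one notes $J_2 > 0$ on $\intervaloo{0,j_{1,1}}$.
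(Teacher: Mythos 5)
Your proof is correct, and it takes a genuinely different route from the paper's. Both arguments normalize the coefficients so that $c,d>0$ and both reduce the theorem to showing $\partial_r R_{0,1}<0$ on $\intervaloo{0,1}$, exploiting the key localization $\alpha_{0,1}<j_{1,1}$ so that $J_1(\alpha_{0,1}r)>0$ there; after that the methods diverge. The paper treats $v:=\partial_r R_{0,1}$ as the solution of a second-order boundary value problem (equation~\eqref{partial R} in the paper) whose right-hand side is negative on $\intervaloo{0,1}$: it multiplies by $v^+$ and integrates to conclude $v\le 0$, then rules out interior zeros by a second-derivative test at an interior maximum, i.e.\ an energy argument followed by a maximum-principle step. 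You instead factor $R'(r)=c\,\alpha_{0,1}J_1(\alpha_{0,1}r)\bigl(\phi(r)-1\bigr)$ and reduce the claim to the strict monotonicity of the ratio $\phi$, which follows from the identity $\frac{d}{dr}\log\phi(r)=\frac1r\bigl(G_1(\kappa r/\alpha_{0,1})-H_1(\alpha_{0,1}r)\bigr)$ together with the two-sided bound $G_1>1>H_1$, the second half being valid on $\intervaloo{0,j_{1,1}}$ thanks to the interlacing $j_{1,1}<j_{2,1}$; the clamped condition enters only through $\phi(1)=1$. Your route is purely identity-based --- no integration by parts, no maximum principle --- and it recycles exactly the logarithmic derivatives $G_k$ and $H_k$ that the paper already uses in its spectral analysis (Lemma~\ref{cserge1} and \cite[Lemma~4.1]{DNT1}), so it is arguably the more economical proof of this particular statement. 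What the paper's technique buys is reusability: the same multiply-by-$u^+$/maximum-principle scheme is applied again in the proofs of Theorems~\ref{thm:Rk1>0} and~\ref{thm:Rkl nodal}, where your factorization would not carry over, since for $\ell\ge 2$ the Bessel factor $J_k(\alpha_{k,\ell}r)$ itself changes sign on $\intervaloo{0,1}$.
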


\begin{proof}
  As $R_{0,1}$  is given by \eqref{eq:R01}
  where  $(c,d)$ is a nontrivial solution of \eqref{eq:R bd cond}
  with $\alpha = \alpha_{0,1}$, we may choose
  $c= I_0\bigl(\frac{\kappa}{\alpha_{0,1}}\bigr)$ and
  $d=-J_0(\alpha_{0,1})$.
  Since $\alpha_{0,1}\in \intervaloo{j_{0,1}, j_{1,1}}$, we deduce
  that $c$ and $d$ are positive.

  Now we want to show that $v(r) := \partial_rR_{0,1}(r) <0$ for all $r \in
  \intervaloo{0,1}$. As $R_{0,1}(1)=0$, we then obtain also $R_{0,1}>0$
  on~$\intervalco{0,1}$.
  First observe that $v$ is given by
  \begin{equation}
    \label{eq:d R01}
    v(r)
    = -c\, \alpha_{0,1} J_1(\alpha_{0,1} r)
    + d \, \frac{\kappa}{\alpha_{0,1}}
    I_1\Bigl(\frac{\kappa}{\alpha_{0,1}} r\Bigr).
  \end{equation}
  A simple computation using \eqref{A:6} and  \eqref{A:6I} shows that $v$ solves
  \begin{equation}
    \label{partial R}
    \begin{array}{c}
      -\partial_r^2 v- \frac{1}{r} \partial_r v
      + \Bigl(\frac{1}{r^2} + \frac{\kappa^2}{\alpha_{0,1}^2}\Bigr) v
      = -c \alpha_{0,1} \Bigl( \alpha_{0,1}^2
      +\frac{\kappa^2}{\alpha_{0,1}^2} \Bigr)
      J_1(\alpha_{0,1} r),
      \\[3\jot]
      v(0)=0,\quad v(1)=0.
    \end{array}
  \end{equation}
  This problem can be rewritten under the form
  \begin{equation}
    \label{Laplacian partial R}
    \begin{array}{c}
      -\partial_r(r \, \partial_rv)
      + r\Bigl(\frac{1}{r^2}+\frac{\kappa^2}{\alpha_{0,1}^2}\Bigr) v
      = -c \alpha_{0,1} \Bigl( \alpha_{0,1}^2
      +\frac{\kappa^2}{\alpha_{0,1}^2} \Bigr) r\,
      J_1(\alpha_{0,1} r),
      \\[3\jot]
      v(0)=0,\quad v(1)=0.
    \end{array}
  \end{equation}
  Since
  $-c \alpha_{0,1} \Bigl( \alpha_{0,1}^2 +
  \frac{\kappa^2}{\alpha_{0,1}^2} \Bigr) r\, J_1(\alpha_{0,1} r)$
  is negative in $\intervaloo{0,1}$, multiplying~\eqref{Laplacian
    partial R} by $v^+$ and integrating we obtain
  \begin{equation*}
    0 \le \int_0^1 r \abs{\partial_r(v^+)(r)}^2 \intd r
    \le \int_0^1 -c \alpha_{0,1} \Bigl( \alpha_{0,1}^2
    + \frac{\kappa^2}{\alpha_{0,1}^2} \Bigr) r\,
    J_1(\alpha_{0,1} r) v^+ \intd r
    \le 0.    
  \end{equation*}
  This implies that $v^+\equiv 0$ i.e., $v\leq 0$. On the other hand,
  if there exists $r_0\in \intervaloo{0,1}$ such that
  $v(r_0)=\max_{[0,1]} v=0$ then $\partial_r v(r_0)=0$, $\partial_r^2 v(r_0)\leq 0$ which
  gives a contradiction with \eqref{partial R}.

  This implies that $v<0$ on $\intervaloo{0,1}$ and concludes the proof.
\end{proof}

\begin{theorem}
  \label{thm:Rk1>0}
  Let $\kappa > 0$, $k \in \IN$, and $R_{k,1}$ be the function defined
  by
  \begin{equation}
  \label{def R_k1}
    R_{k,1}(r)
    = c\,  J_k(\alpha_{k,1} r)
    + d \, I_k\Bigl(\frac{\kappa}{\alpha_{k,1}} r\Bigr)
  \end{equation}
  with $(c,d)$ a nontrivial solution to~\eqref{eq:R bd cond}.
  Then $\abs{R_{k,1}} > 0$ in $\intervalco{0,1}$.
\end{theorem}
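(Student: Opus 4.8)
The plan is to bypass maximum-principle techniques (which, as I explain at the end, do not transpose from Theorem~\ref{R01}) and instead reduce the statement to the positivity of one auxiliary integral, using crucially the eigenvalue equation $F_k(\alpha_{k,1})=0$. Write $\alpha := \alpha_{k,1}$ and normalise the nontrivial solution $(c,d)$ of~\eqref{eq:R bd cond} so that $c>0$; this is legitimate since $c=0$ would force $R_{k,1}=d\,I_k(\tfrac\kappa\alpha r)$, which cannot vanish at $r=1$ because $I_k>0$ on $\intervaloo{0,+\infty}$. For the same reason the sign of $R_{k,1}$ on $\intervaloo{0,1}$ agrees with the sign of the ratio
\begin{equation*}
  h(r) := \frac{R_{k,1}(r)}{I_k\bigl(\tfrac\kappa\alpha r\bigr)}
        = c\,\frac{J_k(\alpha r)}{I_k\bigl(\tfrac\kappa\alpha r\bigr)} + d .
\end{equation*}
Since the boundary condition gives $R_{k,1}(1)=0$, we have $h(1)=0$, so it suffices to show that $h$ is strictly decreasing on $\intervaloo{0,1}$.

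First I would differentiate: the numerator of $h'$ is, up to the positive factor $c/I_k^2$, the Wronskian-type quantity $N(r):=\alpha J_k'(\alpha r)I_k(\tfrac\kappa\alpha r)-\tfrac\kappa\alpha J_k(\alpha r)I_k'(\tfrac\kappa\alpha r)$, so $\sign h'(r)=\sign N(r)$. Using the Bessel and modified Bessel differential equations~\eqref{A:6} and~\eqref{A:6I}, a direct computation makes the cross terms cancel and yields the first-order identity
\begin{equation*}
  \bigl(r N(r)\bigr)'
  = -\Bigl(\alpha^2+\frac{\kappa^2}{\alpha^2}\Bigr)\,
     r\,J_k(\alpha r)\,I_k\bigl(\tfrac\kappa\alpha r\bigr).
\end{equation*}
Because $N(r)=O(r^{2k-1})$ as $r\to0^+$ (each Bessel factor vanishes to order $r^k$ for $k\ge1$), we have $rN(r)\to0$, and integrating gives $rN(r)=-(\alpha^2+\kappa^2/\alpha^2)\,\Phi(r)$ with $\Phi(r):=\int_0^r s\,J_k(\alpha s)I_k(\tfrac\kappa\alpha s)\intd s$. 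Hence $\sign N=-\sign\Phi$, and the whole theorem reduces to proving $\Phi>0$ on $\intervaloo{0,1}$.

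The positivity of $\Phi$ is where the localisation of $\alpha_{k,1}$ and the eigenvalue equation combine. By Theorem~\ref{thm:alpha kl} and the interlacing $j_{k+1,1}<j_{k,2}$ of the Bessel zeros, $\alpha\in\intervaloo{j_{k,1},j_{k,2}}$, so $s\mapsto J_k(\alpha s)$ has exactly one zero $r_1=j_{k,1}/\alpha$ in $\intervaloo{0,1}$, positive on $\intervaloo{0,r_1}$ and negative on $\intervaloo{r_1,1}$. The integrand of $\Phi$ therefore changes sign once, from positive to negative, so $\Phi$ is unimodal: strictly increasing on $\intervaloo{0,r_1}$ and strictly decreasing on $\intervaloo{r_1,1}$. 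Now $\Phi(0)=0$ by definition, while $\Phi(1)=0$ precisely because $rN(r)\big|_{r=1}=N(1)=-F_k(\alpha)=0$, which is the defining condition~\eqref{Dbis} for $\alpha=\alpha_{k,1}$. A unimodal function vanishing at both endpoints is strictly positive in between, so $\Phi>0$ on $\intervaloo{0,1}$. Consequently $N<0$, $h$ is strictly decreasing, and $h>0$ on $\intervaloo{0,1}$; thus $R_{k,1}>0$ there. (For $k=0$ one also has $R_{0,1}(0)=c+d>0$; for $k\ge1$ one has $R_{k,1}(r)\sim \text{const}\cdot r^k>0$ near the origin, the natural vanishing of the $k$-th angular mode at the centre, so in every case $\abs{R_{k,1}}>0$ on $\intervaloo{0,1}$ and $R_{k,1}$ does not change sign.)

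The main obstacle to anticipate is that the monotonicity argument of Theorem~\ref{R01} cannot be reused: for $k\ge1$ the eigenfunction is no longer monotone, since it must rise from $R_{k,1}(0)=0$ and return to $R_{k,1}(1)=0$, so its derivative changes sign. Moreover neither factorisation of the equation is directly amenable to a maximum principle — one factorisation produces the sign-changing source $\propto J_k(\alpha r)$, while the other gives a definite-sign source but for the Helmholtz-type operator $\partial_r^2+r^{-1}\partial_r-k^2r^{-2}+\alpha^2$, which is past its first Dirichlet eigenvalue because $\alpha>j_{k,1}$. The decisive move is to track the monotonicity of the ratio $R_{k,1}/I_k$ instead; the computational heart is the telescoping identity for $(rN)'$ and the recognition that the endpoint value $\Phi(1)=0$ is exactly the eigenvalue equation.
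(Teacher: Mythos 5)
Your proof is correct, and it takes a genuinely different route from the paper's. The paper stays with the maximum-principle scheme of Theorem~\ref{R01}: it fixes the explicit solution $c=I_k(\kappa/\alpha_{k,1})>0$, $d=-J_k(\alpha_{k,1})>0$ of~\eqref{eq:R bd cond}, gets positivity on $\intervaloc{0,j_{k,1}/\alpha_{k,1}}$ for free (a non-negative term plus a positive term), and then, assuming a root $r^*>j_{k,1}/\alpha_{k,1}$, tests the Sturm--Liouville equation~\eqref{eq:Rk} (whose right-hand side is negative on $\intervaloo{r^*,1}$ because $\alpha_{k,1}r$ stays in $\intervaloo{j_{k,1},j_{k,2}}$ there) against $u^+$ to get $R_{k,1}\le 0$ on $\intervalcc{r^*,1}$, contradicting $\partial_r^2R_{k,1}(1)=-c\,(\alpha_{k,1}^2+\kappa^2/\alpha_{k,1}^2)\,J_k(\alpha_{k,1})>0$ under the clamped conditions. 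So your closing claim that the sign-changing source $J_k(\alpha_{k,1}r)$ blocks a maximum-principle proof is too pessimistic: the paper makes exactly that work by splitting the interval at the sign change. Your argument is instead a quotient/Wronskian argument, and it checks out: writing \eqref{A:6} and \eqref{A:6I} in self-adjoint form, the $k^2/r^2$ terms cancel and one indeed gets $(rN)'=-(\alpha_{k,1}^2+\kappa^2/\alpha_{k,1}^2)\,rJ_k(\alpha_{k,1}r)\,I_k(\kappa r/\alpha_{k,1})$; the endpoint identity $\Phi(1)=0$ is precisely $N(1)=-F_k(\alpha_{k,1})=0$, i.e.\ the eigenvalue equation~\eqref{Dbis}; and unimodality of $\Phi$ follows from $j_{k,1}<\alpha_{k,1}<j_{k+1,1}<j_{k,2}$ (interlacing~\eqref{A:9}). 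Your route buys more than the theorem asks: the quotient $R_{k,1}/I_k(\kappa r/\alpha_{k,1})$ is strictly decreasing, the sign of $d$ is never needed, and the eigenvalue relation acquires a transparent meaning as the vanishing of $\int_0^1 sJ_k(\alpha_{k,1}s)\,I_k(\kappa s/\alpha_{k,1})\intd s$. What the paper's route buys is brevity and uniformity, since the same $u^+$-testing device is reused in Theorems~\ref{R01} and~\ref{thm:Rkl nodal}.

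Two small points to tidy. First, for $k=0$ your bound $N=O(r^{2k-1})$ is not the right reason that $rN\to0$; there one notes instead that $J_0'=-J_1$ and $I_0'=I_1$ vanish at the origin, so $N\to0$ and a fortiori $rN\to0$. Second, like the paper's own proof, your argument establishes $\abs{R_{k,1}}>0$ on the open interval $\intervaloo{0,1}$: positivity at $r=0$ holds only for $k=0$, since $R_{k,1}(0)=0$ when $k\ge1$, so the interval $\intervalco{0,1}$ in the statement is accurate only in that case --- a slip in the paper's formulation, which you correctly flag rather than paper over.
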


\begin{proof}
  As  $(c,d)$ is a nontrivial solution of \eqref{eq:R bd cond} with
  $\alpha = \alpha_{k,1}$, we may choose
  $c= I_k\bigl(\frac{\kappa}{\alpha_{k,1}}\bigr)$ and
  $d=-J_k(\alpha_{k,1})$ in \eqref{def R_k1}.  Since
  $\alpha_{k,1}\in \intervaloo{j_{k,1}, j_{k+1,1}}$, we deduce that $c$
  and $d$ are positive.

  For $r \in \intervaloc{0, j_{k,1}/\alpha_{k,1}}$, $R_{k,1}(r)$ is
  clearly positive as the sum of a non-negative and a positive term.

  To prove that $R_{k,1}$ is positive on
  $\intervaloo{j_{k,1}/\alpha_{k,1}, 1}$, suppose on the contrary
  the existence of $r^* \in \intervaloo{j_{k,1}/\alpha_{k,1}, 1}$ such
  that $R_{k,1}(r^*) = 0$.  A simple computation using \eqref{A:6} and
  \eqref{A:6I} shows that $u := R_{k,1}$ is a solution to
  \begin{equation}
    \label{eq:Rk}
    \begin{array}{c}
      -\partial_r(r\, \partial_r u)
      + \Bigl( \frac{k^2}{r^2} + \frac{\kappa^2}{\alpha_{k,1}^2} \Bigr) r\, u
      = c \Bigl( \alpha_{k,1}^2 + \frac{\kappa^2}{\alpha_{k,1}^2} \Bigr)
      r \, J_k(\alpha_{k,1} r)\\[3\jot]
      u(r^*) = 0, \quad  u(1) = 0.
    \end{array}
  \end{equation}
  Note that, for any $r \in \intervaloo{r^*, 1}$,
  $\alpha_{k,1} r \in \intervaloo{j_{k,1}, \alpha_{k,1}} \subseteq
  \intervaloo{j_{k,1}, j_{k+1,1}} \subseteq \intervaloo{j_{k,1},
    j_{k,2}}$
  and so the right hand side of \eqref{eq:Rk} is negative.
  Multiplying the equation by $u^+$ and integrating
  yields
  \begin{equation*}
    0 \le \int_{r^*}^1 r \abs{\partial_r (u^+)}^2 \intd r
    \le \int_{r^*}^1 c
    \Bigl( \alpha_{k,1}^2 + \frac{\kappa^2}{\alpha_{k,1}^2} \Bigr)
    r \, J_k(\alpha_{k,1} r)
    u^+ \intd r
    \le 0,
  \end{equation*}
  and so $u^+ \equiv 0$ i.e., $u \le 0$ on $[r^*,1]$.  Now,
  evaluating~\eqref{eq:Rk} at $r = 1$ and taking into account the
  clamped boundary conditions yields
  \begin{equation*}
    \partial_r^2 R_{k,1}(1)
    = - c \Bigl( \alpha_{k,1}^2 + \frac{\kappa^2}{\alpha_{k,1}^2} \Bigr)
    J_k(\alpha_{k,1}) > 0.
  \end{equation*}
  This contradicts the fact that $R_{k,1}$ is nonpositive on $[r^*,1]$ and
  concludes the proof.
\end{proof}

\begin{lemma}
  \label{lem:alt-signs}
  Let $\kappa > 0$, $k \in \IN$, $\ell \in \IN^*$ and $R_{k,\ell}$ be
  the function defined by~\eqref{eq R} with $\alpha = \alpha_{k,\ell}$
  and $(c,d) \ne (0,0)$ a solution to~\eqref{eq:R bd cond}.
  As it is customary, let $(j'_{k,n})_{n \ge 1}$ be the positive zeros
  of $J'_k$ in increasing order, except for $k=0$ for which we set
  $j'_{0,1} = 0$.
  Then
  \begin{equation}
    \label{eq:alt-signs}
    \forall n = 1,\dotsc, \ell,\qquad
    \sign R_{k,\ell}\Bigl(\frac{j'_{k,n}}{\alpha_{k,\ell}}\Bigr)
    = \sign(c) \, (-1)^{n+1}.
  \end{equation}
\end{lemma}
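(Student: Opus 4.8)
The plan is to eliminate the constant $d$ using the first boundary condition and to reduce the statement to the sign of a single quotient of Bessel functions. Writing $\alpha := \alpha_{k,\ell}$ for brevity, the first equation of~\eqref{eq:R bd cond} gives $d = -c\,J_k(\alpha)/I_k(\kappa/\alpha)$, so that
\begin{equation*}
  R_{k,\ell}(r)
  = c\, I_k\Bigl(\tfrac{\kappa}{\alpha} r\Bigr)\bigl( g(r) - g(1) \bigr),
  \qquad
  g(r) := \frac{J_k(\alpha r)}{I_k(\tfrac{\kappa}{\alpha} r)}.
\end{equation*}
Since $I_k>0$ on $\intervaloo{0,+\infty}$, for every $r \in \intervaloo{0,1}$ we then have $\sign R_{k,\ell}(r) = \sign(c)\,\sign\bigl(g(r)-g(1)\bigr)$, and the value at the point $r = j'_{0,1}/\alpha = 0$ occurring when $k=0$, $n=1$ is recovered by continuity using $I_0(0)=1$. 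Hence it suffices to prove that $\sign\bigl(g(r_n) - g(1)\bigr) = (-1)^{n+1}$ at $r_n := j'_{k,n}/\alpha$ for each $n = 1,\dots,\ell$.

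Next I would carry out the sign bookkeeping for the two factors $g(r_n)$ and $g(1)$. At $r_n$ the numerator of $g$ equals $J_k(j'_{k,n})$, and since the successive extrema of $J_k$ alternate in sign (with the convention $J_0(j'_{0,1})=J_0(0)=1>0$ for $k=0$), one has $\sign g(r_n) = (-1)^{n+1}$. For $g(1)$, Theorem~\ref{thm:alpha kl} localizes $\alpha \in \intervaloo{j_{k,\ell},\, j_{k+1,\ell}} \subseteq \intervaloo{j_{k,\ell},\, j_{k,\ell+1}}$, using the interlacing $j_{k,\ell}<j_{k+1,\ell}<j_{k,\ell+1}$, so $J_k$ has constant sign there and $\sign g(1) = \sign J_k(\alpha) = (-1)^{\ell}$. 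When $n \equiv \ell \pmod 2$ these two signs are \emph{opposite}, and $\sign\bigl(g(r_n)-g(1)\bigr)=\sign g(r_n)=(-1)^{n+1}$ with no further work. The remaining case $n \not\equiv \ell \pmod 2$, where the signs \emph{coincide}, is the one that needs an estimate of magnitudes, namely $\abs{g(r_n)} > \abs{g(1)}$.

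This magnitude comparison is the crux, and I expect it to be the main obstacle. Written out, it reads
\begin{equation*}
  \frac{\abs{J_k(j'_{k,n})}}{I_k\bigl(\tfrac{\kappa}{\alpha^2}\,j'_{k,n}\bigr)}
  > \frac{\abs{J_k(\alpha)}}{I_k\bigl(\tfrac{\kappa}{\alpha}\bigr)},
\end{equation*}
and I would prove it by controlling numerator and denominator separately. Because $n\le\ell$, we have $j'_{k,n} < j_{k,n} \le j_{k,\ell} < \alpha$; as $I_k$ is increasing, this yields $I_k\bigl(\tfrac{\kappa}{\alpha^2}j'_{k,n}\bigr) < I_k\bigl(\tfrac{\kappa}{\alpha}\bigr)$, so the denominator on the left is the smaller. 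For the numerators, the same inequality $\alpha > j'_{k,n}$ together with the classical fact that the local maxima of $\abs{J_k}$ are strictly decreasing gives $\abs{J_k(\alpha)} < \abs{J_k(j'_{k,n})}$ (indeed $\sup_{x\ge j'_{k,n}}\abs{J_k(x)}$ equals $\abs{J_k(j'_{k,n})}$ and is attained only at $x=j'_{k,n}$). Combining these two one-sided bounds produces the displayed strict inequality and settles the hard case. The only nonelementary ingredient is the monotone decay of the Bessel extrema, which is standard; everything else is bookkeeping with the localization of $\alpha_{k,\ell}$ from Theorem~\ref{thm:alpha kl} and the sign and interlacing pattern of $J_k$.
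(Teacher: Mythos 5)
Your proof is correct and takes essentially the same route as the paper's: after normalizing $(c,d)$ through the boundary condition, both arguments split into the easy case where the two contributions to $R_{k,\ell}$ have agreeing signs and a hard case settled by a magnitude comparison that combines the decreasing sequence of extrema $\abs{J_k(j'_{k,n})}$ with the monotonicity of $I_k$. Your factorization $R_{k,\ell}(r) = c\, I_k(\tfrac{\kappa}{\alpha}r)\bigl(g(r)-g(1)\bigr)$ is only a notational repackaging of the paper's direct two-term estimate, with the same key ingredients.
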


\begin{proof}
  Up to a multiplicative constant, the function $R_{k,\ell}$ can be
  written as
  \begin{equation}
    \label{eq:Rkl}
    R_{k,\ell}(r)
    := c J_k(\alpha_{k,\ell} r)
    + d I_k\Bigl(\frac{\kappa}{\alpha_{k,\ell}} r \Bigr)
  \end{equation}
  with $c = I_k\bigl(\frac{\kappa}{\alpha_{k,\ell}} \bigr) > 0$ and
  $d = -J_k(\alpha_{k,\ell})$.
  To fix the ideas, the proof will be carried out for $\ell$ even; the
  case of $\ell$ odd being similar.
  Because $\alpha_{k,\ell} \in \intervaloo{j_{k,\ell}, j_{k+1,\ell}}$
  (see Theorem~\ref{thm:alpha kl}), $d < 0$.
  Note also that the lower bound on $\alpha_{k,\ell}$ implies that
  ${j'_{k,n}}/{\alpha_{k,\ell}} < 1$ for all $n = 1,\dotsc, \ell$.
  If $n$ is even, then
  $J_k(j'_{k,n}) < 0$ which immediately implies that
  $R_{k,\ell}({j'_{k,n}}/{\alpha_{k,\ell}}) < 0$.
  To conclude the proof, it remains to show that
  $R_{k,\ell}({j'_{k,n}}/{\alpha_{k,\ell}}) > 0$
  when $n$ is odd. It is well known~\cite[p.~37]{Relton}
  that $\abs{J_k(j'_{k,n})}$ decreases with respect to~$n$.
  Thus, for odd $n \in \{1,\dotsc,\ell\}$,
  $J_k(j'_{k,n}) > J_k(j'_{k,\ell+1}) \ge J_k(\alpha_{k,\ell}) = -d$
  where the last inequality results from the fact that
  $j'_{k,\ell+1}$ is the point of maximum of $J_k$ over the interval
  $\intervalcc{j_{k,\ell}, j_{k,\ell+1}}$ and $\alpha_{k,\ell} \in
  \intervaloo{j_{k,\ell}, j_{k+1,\ell}}$.
  Moreover, as $I_k$ is increasing, $I_k\Bigl(
  \frac{\kappa}{\alpha_{k,\ell}} \frac{j'_{k,n}}{\alpha_{k,\ell}} \Bigl)
  < I_k\bigl( \frac{\kappa}{\alpha_{k,\ell}} \bigr) = c$.
  Putting the last two inequalities together proves that
  $R_{k,\ell}({j'_{k,n}}/{\alpha_{k,\ell}}) > 0$ for odd~$n$.
\end{proof}

\begin{lemma}
  \label{lem:Rkl coeff0}
  Let $k \in \IN$ and $\ell \in \IN^*$.  Then
  \begin{equation*}
    \forall \kappa \in \intervaloo{0,+\infty},\qquad
    I_k\Bigl(\frac{\kappa}{\alpha_{k,\ell}(\kappa)}\Bigr)
    \alpha_{k,\ell}^k(\kappa)
    - J_k\bigl(\alpha_{k,\ell}(\kappa)\bigr)
    \Bigl( \frac{\kappa}{\alpha_{k,\ell}(\kappa)}\Bigr)^k
    > 0.
  \end{equation*}
\end{lemma}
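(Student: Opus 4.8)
The plan is to reduce the inequality to a clean comparison and then sandwich both sides by the common number $1/(2^k k!)$. First I would abbreviate $\alpha := \alpha_{k,\ell}(\kappa)$ and $z := \kappa/\alpha$, noting that $z>0$ and $\alpha>0$. Dividing the left-hand side of the asserted inequality by $\alpha^k z^k > 0$ shows that the claim is equivalent to
\[
  \frac{I_k(z)}{z^k} > \frac{J_k(\alpha)}{\alpha^k}.
\]
This normalization is the key move: it disentangles the two Bessel factors into two independent one-variable estimates and, in particular, makes the argument uniform in $k$ and $\ell$, with no appeal to the precise location of $\alpha_{k,\ell}$.

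Next I would bound each side against $1/(2^k k!)$. For the left-hand side I would use the everywhere convergent series
\[
  \frac{I_k(z)}{z^k}
  = \frac{1}{2^k}\sum_{m=0}^{\infty}\frac{(z/2)^{2m}}{m!\,(m+k)!}.
\]
All of its terms are positive and the term $m=0$ equals $1/(2^k k!)$; since $z>0$, this yields the strict bound $I_k(z)/z^k > 1/(2^k k!)$. For the right-hand side I would invoke the classical inequality $\lvert J_k(\alpha)\rvert \le (\alpha/2)^k/k!$, valid for all real $\alpha$ and all $k \ge 0$, which gives $J_k(\alpha)/\alpha^k \le 1/(2^k k!)$. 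Chaining the two estimates,
\[
  \frac{I_k(z)}{z^k} > \frac{1}{2^k\,k!} \ge \frac{J_k(\alpha)}{\alpha^k},
\]
is exactly the reformulated claim.

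The one ingredient that is not a one-line manipulation, and hence the step I would treat with most care, is the upper bound on $J_k$. I would either cite it as standard or derive it from Poisson's representation
\[
  J_k(\alpha)
  = \frac{(\alpha/2)^k}{\Gamma(k+\tfrac12)\,\Gamma(\tfrac12)}
    \int_{-1}^{1}(1-t^2)^{k-1/2}\cos(\alpha t)\intd t,
\]
by estimating $\lvert\cos(\alpha t)\rvert \le 1$ and evaluating the resulting Beta integral to $\Gamma(k+\tfrac12)\Gamma(\tfrac12)/\Gamma(k+1)$. I would also remark that for odd $\ell$ the statement is in fact immediate: then $\alpha_{k,\ell}\in\intervaloo{j_{k,\ell},\,j_{k,\ell+1}}$ forces $J_k(\alpha)<0$ while $I_k(z)\alpha^k>0$, so the original left-hand side is a sum of two positive terms. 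The comparison above is what is genuinely needed to cover the even values of $\ell$, and it does so together with the odd ones in a single stroke.
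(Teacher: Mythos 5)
Your proof is correct, and it shares the paper's skeleton: divide by $\alpha^k z^k>0$ (with $z=\kappa/\alpha$) to reduce the claim to $I_k(z)/z^k > J_k(\alpha)/\alpha^k$, and then sandwich both sides through the constant $1/(2^k\,k!)$. The difference lies in how the two one-sided estimates are obtained. On the modified Bessel side, the paper shows $g(z):=I_k(z)/z^k$ is increasing (via $\partial_z g = I_{k+1}(z)/z^k>0$, using \eqref{A:4I}) and compares with its limit at $0$; your power-series argument reaches the same strict bound $g(z) > 1/(2^k\,k!)$ even more directly. The genuine divergence is on the Bessel side: the paper proves $J_k(z)/z^k < 1/(2^k\,k!)$ for all $z>0$ by combining the monotonicity of $h(z):=J_k(z)/z^k$ on $\intervaloc{0,j_{k,1}}$ (from $\partial_z h = -J_{k+1}(z)/z^k$) with the classical fact, cited from Relton, that $\abs{J_k(j'_{k,n})}$ decreases in $n$; you instead invoke the uniform bound $\abs{J_k(\alpha)} \le (\alpha/2)^k/k!$, derived from Poisson's integral representation together with the Beta-integral evaluation $\int_{-1}^1 (1-t^2)^{k-1/2}\intd t = \Gamma(\tfrac12)\Gamma(k+\tfrac12)/\Gamma(k+1)$. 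Both are legitimate; your route is more self-contained, avoiding the interlacing/decreasing-maxima machinery and covering all $\alpha>0$ in a single stroke, while the paper's route reuses ingredients (the derivative identities and the Relton fact) that it already needs elsewhere, e.g.\ in Lemma~\ref{lem:alt-signs}. Your closing observation that the case of odd $\ell$ is immediate from the sign of $J_k(\alpha_{k,\ell})$ coincides with the remark the paper places after the lemma.
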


\begin{remark}
  For odd values of $\ell$, the estimate
  $\alpha_{k,\ell} \in \intervaloo{j_{k,\ell}, j_{k+1,\ell}}$ implies
  $J_k(\alpha_{k,\ell}) < 0$ and the statement thus clearly holds.
  The following proof shows that the statement is true for all values
  of~$\ell$.
\end{remark}

\begin{proof}
  Let us rewrite the statement as
  \begin{equation*}
    g\Bigl( \frac{\kappa}{\alpha_{k,\ell}} \Bigr)
    - h(\alpha_{k,\ell}) > 0,
    \quad\text{where }
    g(z) := \frac{I_k(z)}{z^k}
    \text{ and }
    h(z) = \frac{J_k(z)}{z^k} .
  \end{equation*}
  A simple computation using~\eqref{A:4I} yields
  $\partial_z g(z) = I_{k+1}(z)/z^k > 0$.  So $g$ is increasing and,
  using the expansion \eqref{A:I(z=0)} of $I_k$ around $0$, one gets:
  \begin{equation*}
    \forall z > 0,\qquad
    g(z) > \lim_{z\to 0} g(z)
    = \frac{1}{2^k \, k!} .
  \end{equation*}
  The proof will be complete if we show
  \begin{equation}\label{eq:proph}
    \forall z > 0,\qquad
    \frac{1}{2^k \, k!}
    = \lim_{z \to 0} h(z) > h(z).
  \end{equation}
  The equality directly follows from the expansion~\eqref{A:J(z=0)}.
  Using~\eqref{A:4}, one gets $\partial_z h(z) = - J_{k+1}(z) / z^k$.
  Thus $h$ is decreasing on $\intervaloc{0, j_{k+1,1} } \supseteq
  \intervaloc{0, j_{k,1}}$.

  Because $\abs{J_k(j'_{k,n})}$ decreases with respect to  $n$
  (see~\cite[p.~37]{Relton}), one deduces that, for all $z \ge j_{k,1}$,
  $J_k(j'_{k,1}) > \abs{J_k(z)}$ and so
  $h(j'_{k,1}) > \abs{h(z)}$.  The fact that
  $j'_{k,1} \in \intervaloc{0, j_{k,1}}$ where $h$ is decreasing
  establishes the inequality~\eqref{eq:proph}.
\end{proof}

\begin{theorem}
  \label{thm:Rkl nodal}
  Let $\kappa > 0$, $k \in \IN$, $\ell \in \IN^*$ and $R_{k,\ell}$ be
  the function defined by~\eqref{eq R} with $\alpha = \alpha_{k,\ell}$
  and $(c,d) \ne (0,0)$ a solution to~\eqref{eq:R bd cond}.
  The function $R_{k,\ell}$ possesses $\ell - 1$ roots
  in $\intervaloo{0,1}$, all of which are simple.
\end{theorem}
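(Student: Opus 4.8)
The plan is to pin down the sign pattern of $R_{k,\ell}$ at a finite, well-chosen set of points and then turn that pattern into an \emph{exact} zero count by a maximum principle for the second order operator that $R_{k,\ell}$ satisfies, in the spirit of the proofs of Theorems~\ref{R01} and~\ref{thm:Rk1>0}. First I would normalize $(c,d)$ as in Lemma~\ref{lem:alt-signs}, so that $c = I_k(\kappa/\alpha) > 0$ and $d = -J_k(\alpha)$ with $\alpha := \alpha_{k,\ell}$; since $\alpha \in \intervaloo{j_{k,\ell}, j_{k+1,\ell}} \subseteq \intervaloo{j_{k,\ell}, j_{k,\ell+1}}$ one has $\sign d = (-1)^{\ell+1}$. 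Writing $q_m := j_{k,m}/\alpha$ for the zeros of $r\mapsto J_k(\alpha r)$ and $p_n := j'_{k,n}/\alpha$ for its critical points, the lower bound $\alpha > j_{k,\ell}$ places $q_1 < \dots < q_\ell$ and $p_1 < \dots < p_\ell$ in $\intervaloo{0,1}$, interlaced as $p_1 < q_1 < p_2 < \dots < p_\ell < q_\ell < 1$. I record three pieces of sign data: (i) $\sign R_{k,\ell}(p_n) = (-1)^{n+1}$ for $n = 1,\dots,\ell$ by Lemma~\ref{lem:alt-signs}; (ii) $R_{k,\ell}(q_m) = d\, I_k(\kappa q_m/\alpha)$ has the \emph{constant} sign $(-1)^{\ell+1}$ for every $m$, since $I_k > 0$; (iii) $R_{k,\ell}(r) > 0$ for small $r > 0$, which for $k \ge 1$ is the content of Lemma~\ref{lem:Rkl coeff0} and for $k = 0$ reads $R_{0,\ell}(0) = c + d > 0$. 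Finally, the clamped condition gives $R_{k,\ell}(1) = \partial_r R_{k,\ell}(1) = 0$, so $r = 1$ is a double zero.

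The lower bound is then immediate: the signs in (i) alternate, so the intermediate value theorem yields at least one zero in each of the $\ell - 1$ intervals $\intervaloo{p_n, p_{n+1}}$, hence at least $\ell - 1$ zeros in $\intervaloo{0,1}$.

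For the matching upper bound (and for simplicity) I would use that $u := R_{k,\ell}$ solves the second order equation of the type~\eqref{eq:Rk}, namely $\mathcal{L} u = f$ with $\mathcal{L} u := -\partial_r(r\,\partial_r u) + (k^2/r^2 + \kappa^2/\alpha^2)\,r\,u$ and $f(r) := c(\alpha^2 + \kappa^2/\alpha^2)\,r\,J_k(\alpha r)$, so that $\sign f = \sign J_k(\alpha r)$ on $\intervaloo{0,1}$. The zeroth order coefficient of $\mathcal{L}$ is positive, which supplies two tools on any subinterval where $f$ has a fixed sign: a pointwise test (at an interior local minimum with nonpositive value one has $\mathcal{L} u \le 0$, incompatible with $f > 0$, and symmetrically at a maximum where $f < 0$), and positivity of the Dirichlet Green's function together with the Hopf lemma (between two zeros of $u$ in a region where $f \ge 0$ one has $u > 0$, and both endpoints are transversal zeros). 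I then split $\intervaloo{0,1}$ into $\intervaloo{0,q_1}$, the middle pieces $\intervaloo{q_m, q_{m+1}}$ for $1 \le m \le \ell-1$, and $\intervaloo{q_\ell, 1}$, on each of which $f$ keeps a constant sign, the $q_m$ themselves not being zeros of $u$ by (ii). On a middle piece the endpoints carry the sign $(-1)^{\ell+1}$ while the interior critical point $p_{m+1}$ carries the sign $(-1)^m$, which also equals $\sign f$ there: when these agree the tools force $u$ to keep one sign, giving no zero, and when they disagree $u$ has a genuine bump and the tools pin down exactly two simple zeros. The piece $\intervaloo{0,q_1}$ is treated identically using (iii), and on $\intervaloo{q_\ell, 1}$ the double zero at $r = 1$ already exhausts the two-zero budget, so the Hopf lemma forbids any further interior zero. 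A short parity count of the pieces exhibiting a sign disagreement then yields exactly $\ell - 1$ zeros in total, all simple.

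The main obstacle is the upper bound, and inside it the exclusion of tangential (double) zeros: the pointwise second-derivative test alone does not rule out a graph that merely grazes the axis from the ``wrong'' side in a region where $f$ is one-signed. The remedy is exactly to pair the Hopf lemma at genuine zeros with the interior sign value $R_{k,\ell}(p_{m+1})$ from (i), which certifies that every excursion is a true sign change rather than a tangency, and to use the clamped double zero at $r = 1$ to close off the final piece. The residual work — checking the interlacing of the $p_n$ and $q_m$, and the parity bookkeeping converting per-piece counts into the total $\ell - 1$ — is routine.
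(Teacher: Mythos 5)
Your proof is correct, and it runs on the same ingredients as the paper's own proof --- the alternating signs at $j'_{k,n}/\alpha_{k,\ell}$ from Lemma~\ref{lem:alt-signs}, the constant sign $\sign(d)=(-1)^{\ell+1}$ at the Bessel zeros $j_{k,m}/\alpha_{k,\ell}$, positivity near $r=0$ from Lemma~\ref{lem:Rkl coeff0}, the second order equation~\eqref{eq:Rkl-edo}, and the maximum principle paired with Hopf's lemma --- but it organizes the count around the dual decomposition. The paper cuts $\intervaloo{0,1}$ at the critical points $j'_{k,n}/\alpha_{k,\ell}$ and proves that each of the $\ell-1$ middle intervals carries exactly one simple root and the two extreme ones none; since the forcing term $J_k(\alpha_{k,\ell}\,r)$ changes sign inside each such interval, the paper must subdivide at the interior Bessel zero and split into cases according to $\sign(d)$, but in exchange the total $\ell-1$ needs no parity argument. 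You cut instead at the Bessel zeros, so that the forcing term keeps one sign on every piece and the maximum principle applies to whole pieces at once: a middle piece carries zero or exactly two simple roots according to whether $(-1)^{\ell+1}$ agrees with $(-1)^m$, and the price is the concluding parity count --- for $\ell$ odd, the $(\ell-1)/2$ disagreeing middle pieces give $\ell-1$ roots; for $\ell$ even, the $(\ell-2)/2$ disagreeing middle pieces plus one root in the first piece give $\ell-1$ as well (worth writing out, since the first piece contributes one root rather than two when it disagrees, its left datum being the sign near $0$ rather than $\sign(d)$). At $r=1$ the two arguments also diverge slightly: the paper computes $\sign \partial_r^2 R_{k,\ell}(1)=(-1)^{\ell+1}$ from the equation and the clamped conditions and contradicts the sign the maximum principle would force on $\intervaloo{r^*,1}$, while you reach the same contradiction more directly by Hopf's lemma against $\partial_r R_{k,\ell}(1)=0$; the two are equivalent, yours marginally cleaner. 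Finally, when you formalize the ``two-zero budget'', the clean statement is: if $z_1<z_2$ are the smallest and largest roots of $R_{k,\ell}$ in a constant-sign piece, the strong maximum principle on $\intervaloo{z_1,z_2}$ forces a strict sign there, so no further roots (tangential or not) can occur, and Hopf's lemma at $z_1$ and $z_2$ gives simplicity; this is exactly the pairing you describe, so the obstacle you flag is only expository, not a gap.
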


\begin{proof}
  Lemma~\ref{lem:alt-signs} says that
  $R_{k,\ell}({j'_{k,n}}/{\alpha_{k,\ell}})$ takes alternate signs
  when $n$ runs from $1$ to $\ell$.  Thus $R_{k,\ell}$ possesses at
  least $\ell - 1$ zeros.
  It remains to show that there is only one root in each interval
  $\intervaloo{j'_{k,n}/\alpha_{k,\ell}, j'_{k,n+1}/\alpha_{k,\ell}}$,
  $n = 1,\dotsc, \ell-1$, and that there are no roots in
  $\intervaloo{0, j'_{k,1}/\alpha_{k,\ell}}$ and
  $\intervaloo{j'_{k,\ell}/\alpha_{k,\ell}, 1}$.

  A direct computation using the definition~\eqref{eq:Rkl} of
  $R_{k,\ell}$ as well as \eqref{A:6} and \eqref{A:6I} shows that $u =
  R_{k,\ell}$ is a solution to:
  \begin{equation}
    \label{eq:Rkl-edo}
    -\partial_r(r\, \partial_r u)
    + \Bigl( \frac{k^2}{r^2} + \frac{\kappa^2}{\alpha_{k,\ell}^2} \Bigr) r\, u
    = c \Bigl( \alpha_{k,\ell}^2 + \frac{\kappa^2}{\alpha_{k,\ell}^2} \Bigr)
    r \, J_k(\alpha_{k,\ell} r) .
  \end{equation}
  Note also that, for $n = 1,\dotsc, \ell$,
  \begin{equation}
    \label{eq:Rkl(jkn)}
    \sign R_{k,\ell}\Bigl( \frac{j_{k,n}}{\alpha_{k,\ell}} \Bigr)
    = \sign\biggl( d I_k\Bigl(\frac{\kappa}{\alpha_{k,\ell}} r \Bigr) \biggr)
    = \sign(d).
  \end{equation}
  The expansions \eqref{A:J(z=0)} and \eqref{A:I(z=0)} yield
  \begin{equation*}
    R_{k,\ell}(r)
    = \gamma \frac{r^k}{2^k \, k!} \bigl(1 + o(1) \bigr)
  \end{equation*}
  where $\gamma := c \alpha_{k,\ell}^k + d (\kappa/\alpha_{k,\ell})^k$
  is positive because of the choice of $c$ and $d$ in~\eqref{eq:Rkl}
  and Lemma~\ref{lem:Rkl coeff0}.  Thus, there exists $\epsilon > 0$
  such that $R_{k,\ell}(r) > 0$ for all $r \in \intervaloc{0,
    \epsilon}$.  Without loss of generality, we can assume that
  $\epsilon < j'_{k,1} / \alpha_{k,\ell}$.

  On $\intervaloo{\epsilon, j_{k,1}/\alpha_{k,\ell}}$, the right hand
  side of \eqref{eq:Rkl-edo} is positive.  Because
  $R_{k,\ell}(\epsilon)$ and $R_{k,\ell}(j'_{k,1}/\alpha_{k,\ell})$
  are both positive, the maximum principle implies that
  $R_{k,\ell} > 0$ on
  $\intervalcc{\epsilon, j'_{k,1}/\alpha_{k,\ell}}$.  Thus
  $R_{k,\ell}$ has no root in $\intervaloo{0,
    j'_{k,1}/\alpha_{k,\ell}}$.

  If $d > 0$, by \eqref{eq:Rkl(jkn)}, we have
  $R_{k,\ell}(j_{k,1}/\alpha_{k,\ell}) > 0$
  and the same reasoning shows that
  $R_{k,\ell} > 0$ on $\intervaloo{j'_{k,1}/\alpha_{k,\ell},
    j_{k,1}/\alpha_{k,\ell}}$.  If $d < 0$, $R_{k,\ell}$ must have a
  root in the previous interval.  That root is unique because, if $r_1
  < r_2$ were two roots, applying the maximum principle on
  $\intervaloo{j'_{k,1}/\alpha_{k,\ell}, r_2}$ would imply that
  $R_{k,\ell}(r_1) > 0$, a contradiction.  Moreover, the Hopf boundary
  Lemma implies that this root is simple.

  On the interval
  $\intervaloo{j_{k,1}/\alpha_{k,\ell}, j_{k,2}/\alpha_{k,\ell}}$, the
  right hand side of \eqref{eq:Rkl-edo} is negative.  If $d < 0$,
  $R_{k,\ell}$ is negative at both endpoints of
  $\intervaloo{j_{k,1}/\alpha_{k,\ell}, j'_{k,2}/\alpha_{k,\ell}}$
  and applying the maximum principle shows that $R_{k,\ell} < 0$ on
  the whole interval.  If $d > 0$, $R_{k,\ell}$ must have a root in
  the previous interval.  As before that root must be unique (if $r_1
  < r_2$ are two roots, apply the maximum principle on
  $\intervaloo{r_1, j'_{k,2}/\alpha_{k,\ell}}$ to get a contradiction)
  and simple.

  The above arguments show that in all cases, $R_{k,\ell}$ possesses a
  single root in
  $\intervaloo{j'_{k,1}/\alpha_{k,\ell}, j'_{k,2}/\alpha_{k,\ell}}$
  and that root is simple.  The same reasoning applies to all
  intervals
  $\intervaloo{j'_{k,n}/\alpha_{k,\ell}, j'_{k,n+1}/\alpha_{k,\ell}}$.

  To conclude, let us now prove that there is no root in the last
  interval $\intervaloo{j'_{k,\ell}/\alpha_{k,\ell}, 1}$.  Choosing
  $c > 0$ and $d$ as above, 
  by  Lemma~\ref{lem:alt-signs} for the first equality, \eqref{eq:Rkl(jkn)} 
  and the bounds on $\alpha_{k,\ell}$ of
  Theorem~\ref{thm:alpha kl} for the second one and evaluating
  \eqref{eq:Rkl-edo} at
  $r=1$ and taking into account the clamped boundary conditions for
  the last one, we obtain
\allowdisplaybreaks
  \begin{align}
    \sign R_{k,\ell}\Bigl(\frac{j'_{k,\ell}}{\alpha_{k,\ell}}\Bigr)
    &= (-1)^{\ell+1} ,
    \notag\\
    \sign  R_{k,\ell}\Bigl(\frac{j_{k,\ell}}{\alpha_{k,\ell}}\Bigr)
    &= \sign(d) = \sign \bigl(-J_k(\alpha_{k,\ell}) \bigr)
      = (-1)^{\ell+1} ,
    \notag\\
    \sign \partial_r^2 R_{k,\ell}(1)
    & = -\sign J_k(\alpha_{k,\ell})
      = (-1)^{\ell+1}.
      \label{eq:d2R}
  \end{align}

  Because the sign of the right hand side of~\eqref{eq:Rkl-edo} on
  $\intervaloo{j_{k,\ell-1}/\alpha_{k,\ell},
    j_{k,\ell}/\alpha_{k,\ell}}$
  is also $(-1)^{\ell+1}$ and that interval contains
  $\intervaloo{j'_{k,\ell}/\alpha_{k,\ell},
    j_{k,\ell}/\alpha_{k,\ell}}$,
  the maximum principle implies that $R_{k,\ell}$ has the same sign on
  the whole interval
  $\intervaloo{j'_{k,\ell}/\alpha_{k,\ell},
    j_{k,\ell}/\alpha_{k,\ell}}$.
  In particular, it has no root there.

  On $\intervaloo{j_{k,\ell}/\alpha_{k,\ell}, 1}$, the sign of the
  right hand side of~\eqref{eq:Rkl-edo} is $(-1)^\ell$.  Thus, if
  there was a root $r^*$ in that interval, the maximum principle
  applied to $\intervaloo{r^*, 1}$ would imply that $R_{k,\ell}$ has
  sign $(-1)^\ell$ over that interval.
  This contradicts~\eqref{eq:d2R} and shows that there is no root in
  that interval either.
\end{proof}

\begin{remark}
  \label{rem:sign-Rkl}%
  The function $R_{0,1}$ is pictured on Fig.~\ref{fig:eigenfun}
  for ``small'' and ``large'' values of $\kappa$.  The
  different graphs of $R_{0,2}$ illustrate the nodal properties
  proved in Theorem~\ref{thm:Rkl nodal}.
  In view of Fig.~\ref{fig:graph-alpha}, the second
  eigenfunction space is spanned by $R_{1,1}(r) \e^{\pm\i \theta}$ and
  thus necessarily changes sign due to its angular part.  However, its
  radial part $R_{1,1}$ does not change sign as established in
  Theorem~\ref{thm:Rk1>0}.  It is no longer monotone though.
\end{remark}

\begin{figure}[hbt]
  \centering
  \newcommand{\grapheigenfun}[2]{
    \draw[->] (-0.01, 0) -- (1.2,0) node[below]{$r$};
    \draw[->] (0, -1.) -- (0, 1.2);
    \draw (1, 3pt) -- (1, -3pt) node[below]{$1$};
    \foreach \kp/\c/\y in {0.1/darkred/0.4, 10/darkorange/0.6,
      30/darkgreen/0.8,  100/darkblue/1}{
      \draw[color=\c] plot file{\graphpath{eigenfun-#1-#2-\kp.dat}};
    }
  }
  \begin{tikzpicture}[x=15ex, y=10ex]
    \grapheigenfun{0}{1}
    \node at (0.5, 0.9) {$R_{0,1}$};
    \begin{scope}[xshift=22.5ex]
      \grapheigenfun{0}{2}
    \node at (0.5, 0.9) {$R_{0,2}$};
    \end{scope}
    \begin{scope}[xshift=45ex]
      \grapheigenfun{1}{1}
    \node at (0.8, 0.9) {$R_{1,1}$};
    \end{scope}
  \end{tikzpicture}
  \caption{Graphs of $R_{0,1}$, $R_{0,2}$ and $R_{1,1}$ for
    $\kappa= 0.1$ (red), $\kappa= 10$ (orange), $\kappa= 30$ (green)
    and $\kappa= 100$ (blue).}
  \label{fig:eigenfun}
\end{figure}

\section{Extension to any dimension}
\label{sec:N>2}

In this section, we show how the previous results may be extended to
any dimension $N \ge 2$.  This generalization is straightforward so we
only sketch the modifications to be made.

To find the eigenvalues of~\eqref{eq:pbmgeneral}, we use spherical
coordinates $r = \abs{x} \in \intervalco{0,+\infty}$ and
$\theta = \frac{x}{\abs{x}} \in \IS^{N-1}$ and the ansatz
$u = R(r) \Y_{k}(\theta)$ where $\Y_{k}$ is a spherical harmonic of
degree $k \in \IN$, i.e., an harmonic homogeneous polynomial of
degree~$k$.  Expressing $\Delta$ in spherical coordinates, equations
\eqref{serge4} and \eqref{serge4bis} become, respectively,
\begin{align}
  \label{eq:spherical-J}
  &\partial_r^2 R + \frac{N-1}{r} \partial_r R
  + \Bigl( \alpha^2 - \frac{k(k+N-2)}{r^2} \Bigr) = 0,\\
  \label{eq:spherical-I}
  &\partial_r^2 R + \frac{N-1}{r} \partial_r R
  - \Bigl( \frac{\kappa^2}{\alpha^2} + \frac{k(k+N-2)}{r^2} \Bigr) = 0  .
\end{align}
Performing the change of variables $R(r) = r^{(N-2)/2} B(r)$, one
respectively gets the following equations
\begin{align}
  \label{eq:Bessel-J}
  &\partial_r^2 B + \frac{1}{r} \partial_r B
  + \Bigl( \alpha^2 - \frac{\nu_k^2}{r^2} \Bigr) B = 0,
  \\
  \label{eq:Bessel-I}
  &\partial_r^2 B + \frac{1}{r} \partial_r B
  - \Bigl( \frac{\kappa^2}{\alpha^2} + \frac{\nu_k^2}{r^2} \Bigr) B = 0.
\end{align}
where $\nu_k := k + \frac{N-2}{2}$.  Following the same approach as
in Proposition~\ref{eigenfunctions}, one deduces that the
eigenfunctions have the form $R(r) \Y_{k}(\theta)$ with
\begin{equation}
  \label{eq:R-general}
  R(r) = c J_{\nu_k}(\alpha r)
  + d I_{\nu_k}\Bigl( \frac{\kappa}{\alpha} r \Bigr)
\end{equation}
and $\alpha$ being a positive solution to
\begin{equation}
  \label{eq:Fk-general}
  F_k(\alpha)
  := \frac{\kappa}{\alpha} J_{\nu_k}(\alpha)
  I'_{\nu_k} \Bigl(\frac{\kappa}{\alpha} \Bigr)
  - \alpha I_{\nu_k}\Bigl(\frac{\kappa}{\alpha}\Bigr)  J'_{\nu_k}(\alpha )
  =0.
\end{equation}
Because $\nu_k \ge 0$, $\nu_{k+1} = \nu_k + 1$ and the proofs do not
use the fact that $k$ is an integer, Lemma~\ref{cserge1},
Theorem~\ref{thm:alpha kl}, Lemma~\ref{alpha decrease},
Lemma~\ref{kappa->0},... remain valid with $k$ replaced by $\nu_k$.

\begin{theorem}
  \sloppy %
  Let $\kappa > 0$.  Denote $(\alpha_{k,\ell})_{\ell \ge 1}$ the
  infinitely many simple roots of $F_k$ ordered in increasing order.
  The eigenvalues of~\eqref{eq:pbm} are given by
  \begin{equation*}
    \lambda_{k,\ell}
    = \alpha_{k,\ell}^2 - \frac{\kappa^2}{\alpha_{k,\ell}^2},
    \qquad
    k \in \IN, \  \ell \in \IN^*,
  \end{equation*}
  and, in spherical coordinates
  $(r,\theta)$, the corresponding eigenfunctions are $R_{k,\ell}(r)
  \Y_k(\theta)$ where $R_{k,\ell}$ is
  defined by~\eqref{eq:R-general} with $\alpha = \alpha_{k,\ell}$ and
  $(c,d) \ne (0,0)$ chosen to satisfy the boundary conditions, and
  $\Y_k$ is a spherical harmonic of degree~$k$.
  In addition the following holds.
  \begin{itemize}
  \item The first eigenvalue is given by $\lambda_{0,1}$ and the
    corresponding eigenspace is spanned by
    $x \mapsto R_{0,1}(\abs{x})$.  Moreover
    $r \mapsto \abs{R_{0,1}(r)}$ is positive and decreasing on
    $\intervalco{0,1}$.
  \item For any $k$ and $\ell$, the function $R_{k,\ell}$ possesses
    $\ell - 1$ roots in $\intervaloo{0,1}$, all of which are simple.
  \end{itemize}
\end{theorem}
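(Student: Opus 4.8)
The plan is to run the two-dimensional arguments of Sections~\ref{Sect2} and~\ref{sec:nodal prop} essentially unchanged, the only substantive modification being to replace the integer Bessel order $k$ by the (generally non-integer) real order $\nu_k := k + \tfrac{N-2}{2}$. First I would set up separation of variables: writing $u = R(r)\Y_k(\theta)$ and using $\Delta_{\IS^{N-1}}\Y_k = -k(k+N-2)\Y_k$, the factorization \eqref{serge3} splits, as before, into the two radial equations \eqref{eq:spherical-J}--\eqref{eq:spherical-I}; the substitution reducing them to the Bessel and modified Bessel equations \eqref{eq:Bessel-J}--\eqref{eq:Bessel-I} of order $\nu_k$ then supplies four fundamental solutions, of which only $J_{\nu_k}(\alpha r)$ and $I_{\nu_k}(\tfrac{\kappa}{\alpha}r)$ are admissible near $r=0$. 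Imposing the clamped conditions at $r=1$ yields the linear system analogous to \eqref{eq:R bd cond}, whose solvability condition is exactly \eqref{eq:Fk-general}; this gives the stated form \eqref{eq:R-general} of the eigenfunctions and the relation $\lambda_{k,\ell}=\alpha_{k,\ell}^2-\kappa^2/\alpha_{k,\ell}^2$.

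Next I would argue that these exhaust the spectrum. Since $\nu_k \ge 0$ (because $N\ge2$), $\nu_{k+1}=\nu_k+1$, and none of the computations in Lemma~\ref{cserge1}, Theorem~\ref{thm:alpha kl}, Lemma~\ref{alpha decrease} or Lemma~\ref{kappa->0} used that the order is an integer---they rely only on the recurrence and differential relations for $J_\nu, I_\nu$ and on $u=I_{\nu+1}/I_\nu<1$---each carries over verbatim with $k$ replaced by $\nu_k$. In particular $F_k$ has simple positive roots forming an increasing sequence $\alpha_{k,\ell}$ with $j_{\nu_k,\ell}<\alpha_{k,\ell}<j_{\nu_{k+1},\ell}$. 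Completeness is then obtained exactly as in Proposition~\ref{eigenfunctions}: expanding any eigenfunction in the Hilbert basis of $L^2(\IS^{N-1})$ given by spherical harmonics, each homogeneous mode of degree $k$ decouples and must solve the reduced radial problem, so no eigenvalue outside $\{\lambda_{k,\ell}\}$ can occur (cf.\ \cite[Theorem~3.2]{DNT1}).

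For the first eigenvalue I would use that $\alpha\mapsto\alpha^2-\kappa^2/\alpha^2$ is increasing, so the bottom of the spectrum corresponds to the smallest of all the $\alpha_{k,\ell}$. The localization $j_{\nu_k,\ell}<\alpha_{k,\ell}$ together with the classical monotonicity and interlacing of the zeros $j_{\nu,\ell}$ in both indices ($j_{\nu,1}<j_{\nu+1,1}<j_{\nu,2}<\cdots$) forces $\alpha_{0,1}$ to be that minimum, whence $\lambda_1=\lambda_{0,1}$. Because the degree-$0$ spherical harmonics are the constants, the corresponding eigenspace is one-dimensional, spanned by $x\mapsto R_{0,1}(\abs{x})$, so $\lambda_{0,1}$ is simple. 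Positivity of $R_{0,1}$ on $\intervalco{0,1}$ is the $k=0$ instance of the generalized Theorem~\ref{thm:Rk1>0}, while its monotonicity follows by rerunning the maximum-principle argument of Theorem~\ref{R01} on $v=\partial_r R_{0,1}$: the forcing term keeps a constant sign (governed by $J_{\nu_0}(\alpha_{0,1}r)$ on $\intervaloo{0,1}$) and the associated Sturm--Liouville operator with clamped data stays coercive, which is all that proof actually uses.

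Finally, the nodal count for general $(k,\ell)$ is the generalized Theorem~\ref{thm:Rkl nodal}: the supporting Lemmas~\ref{lem:alt-signs} and~\ref{lem:Rkl coeff0} invoke only Bessel recurrences and the fact that $\abs{J_\nu(j'_{\nu,n})}$ decreases in $n$, both valid for real $\nu\ge0$, so the alternating-sign and maximum-principle scheme delivers exactly $\ell-1$ simple interior roots. I expect the genuine obstacle to be twofold: making the completeness step fully rigorous (justifying the mode-by-mode decoupling and the domain/regularity bookkeeping for the fourth-order operator on the ball), and checking that the passage $R=r^{\pm(N-2)/2}B$ between the Bessel profile of \eqref{eq:R-general} and the true radial eigenfunction introduces only a factor that is strictly positive on $\intervaloo{0,1}$, so that it alters neither the sign, nor the location, nor the multiplicity of the interior zeros, and---for the first eigenfunction---remains compatible with monotonicity. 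Everything else is a mechanical transcription of the two-dimensional proofs.
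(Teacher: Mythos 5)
Your proposal is correct and follows essentially the same route as the paper: separation of variables into spherical harmonics, reduction of the radial equations to Bessel and modified Bessel equations of order $\nu_k = k + \tfrac{N-2}{2}$, and verbatim transfer of the two-dimensional results (Lemma~\ref{cserge1}, Theorem~\ref{thm:alpha kl}, Lemmas~\ref{alpha decrease}--\ref{kappa->0}, Theorems~\ref{R01}, \ref{thm:Rk1>0}, \ref{thm:Rkl nodal}), none of which uses integrality of the Bessel order. You are in fact slightly more careful than the paper on one point: the substitution should read $R(r) = r^{-(N-2)/2}B(r)$ (the paper's sign is a typo), and your observation that this strictly positive factor on $\intervaloo{0,1}$ changes neither signs, nor zeros and their multiplicities, nor (combined with $B>0$, $\partial_r B<0$) the monotonicity of the first radial profile is precisely the justification the paper leaves implicit.
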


\appendix
\section{Bessel and Modified Bessel functions}

\subsection{Standard Properties}

As convenience to the reader, we gather in this section various
properties of Bessel and modified Bessel functions (see for instance
\cite{dlmf}) that are used in this paper.

\subsubsection*{Recurrence Relations and Derivatives}

The Bessel functions  $J_{\nu}$ satisfy
\allowdisplaybreaks
\begin{gather}
  \label{A:1}
  {\nu} J_{\nu}(z)=\frac{z}{2} \bigl(J_{\nu-1}(z)+J_{\nu+1}(z)\bigr),
  \\[1\jot]
  \label{A:3}
  J_{\nu}'(z)=J_{\nu-1}(z)-\frac{\nu}{z}J_{\nu}(z),
  \\[1\jot]
  \label{A:4}
  J_{\nu}'(z)=-J_{\nu+1}(z)+\frac{\nu}{z}J_{\nu}(z),
  \\[1\jot]
  \label{A:5}
  J_0'(z)=-J_{1}(z),
  \\[1\jot]
  \label{A:6}
  z^2J_{\nu}''(z)+ zJ_{\nu}'(z)+(z^2-{\nu}^2) J_{\nu}(z)=0.
\end{gather}
The modified Bessel functions  $I_{\nu}$ satisfy
\allowdisplaybreaks
\begin{gather}
  \label{A:1I}
  {\nu} I_{\nu}(z)=\frac{z}{2} \bigl(I_{\nu-1}(z)-I_{\nu+1}(z)\bigr),
  \\[1\jot]
  \label{A:3I}
  I_{\nu}'(z)=I_{\nu-1}(z)-\frac{\nu}{z}I_{\nu}(z),
  \\[1\jot]
  \label{A:4I}
  I_{\nu}'(z)=I_{\nu+1}(z)+\frac{\nu}{z}I_{\nu}(z),
  \\[1\jot]
  \label{A:5I}
  I_0'(z)=I_{1}(z),
  \\[1\jot]
  \label{A:6I}
  z^2I_{\nu}''(z)+ zI_{\nu}'(z)-(z^2+{\nu}^2) I_{\nu}(z)=0.
\end{gather}

\subsubsection*{Asymptotic behaviour}

For any given $\nu \ne -1,-2,-3,\dotsc$, when $z \to 0$,
\begin{align}
  \label{A:J(z=0)}
  J_\nu(z) &= \frac{z^\nu}{2^\nu \Gamma(\nu+1)} (1 + o(1)),
  \\[1\jot]
  \label{A:I(z=0)}
  I_\nu(z) &= \frac{z^\nu}{2^\nu \Gamma(\nu+1)} (1 + o(1)).
\end{align}

\subsection{Zeros}

When $\nu\geq0$, the positive zeros of $J_{\nu}$ are simple
and interlace according to the inequalities
\begin{equation}
  \label{A:9}
  j_{\nu,1} <j_{\nu+1,1} <j_{\nu,2} <j_{\nu+1,2} <j_{\nu,3} <\cdots
\end{equation}
On the other hand, for all $z>0$, $I_k(z)>0$ and is increasing.

\subsection{Some inequalities}

\begin{lemma}
  \label{lextLaforgia}
  For all real number $\nu> -1$, we have
  \begin{equation}
    \label{christophe}
    \forall z > 0,\qquad
    \left(\frac{ I_{\nu+1}(z)}{ I_{\nu}(z)}\right)^2
    + \frac{2\nu}{z}\frac{ I_{\nu+1}(z)}{ I_{\nu}(z)}-1
    < 0
  \end{equation}
  whence
  \begin{equation}\label{extLaforgia}
    \forall z>0,\qquad
    \frac{ I_{\nu+1}(z)}{ I_{\nu}(z)}
    < \frac{-\nu+\sqrt{\nu^2+z^2}}{z}.
  \end{equation}
\end{lemma}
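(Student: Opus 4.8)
The implication \eqref{christophe}~$\Rightarrow$~\eqref{extLaforgia} is the soft part, and I would dispatch it first. For fixed $z > 0$, set $t := I_{\nu+1}(z)/I_\nu(z)$ and regard the left-hand side of \eqref{christophe} as the value $q(t)$ of the upward parabola $q(s) := s^2 + \frac{2\nu}{z} s - 1$. Its larger root is precisely $\frac{-\nu + \sqrt{\nu^2 + z^2}}{z}$, so the hypothesis $q(t) < 0$ places $t$ strictly between the two roots of $q$, hence strictly below the larger one; this is exactly \eqref{extLaforgia}.

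The substance is \eqref{christophe} itself, which I would recast as a Turán-type inequality. Because $I_\nu(z) > 0$ for $\nu > -1$, multiplying \eqref{christophe} by $I_\nu(z)^2$ and substituting $I_{\nu-1}(z) = I_{\nu+1}(z) + \frac{2\nu}{z} I_\nu(z)$, which is \eqref{A:1I}, shows that \eqref{christophe} is equivalent to the positivity of the Turánian
\[
  \Delta(z) := I_\nu(z)^2 - I_{\nu-1}(z)\, I_{\nu+1}(z),
  \qquad z > 0 .
\]
So it is enough to prove $\Delta > 0$ on $\intervaloo{0, +\infty}$.

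The engine of the proof is a first-order linear ODE for $\Delta$. Differentiating $\Delta$ with the help of $I_\mu'(z) = \tfrac12\bigl(I_{\mu-1}(z) + I_{\mu+1}(z)\bigr)$ (which follows from \eqref{A:3I} and \eqref{A:4I}), and then eliminating the orders $\nu - 2$ and $\nu + 2$ that appear by repeated use of \eqref{A:1I}, I expect to reach
\[
  \Delta'(z) + \frac{2}{z}\,\Delta(z) = \frac{2}{z}\, I_\nu(z)^2,
  \qquad\text{i.e.}\qquad
  \bigl(z^2 \Delta(z)\bigr)' = 2z\, I_\nu(z)^2 > 0 .
\]
Thus $z \mapsto z^2\Delta(z)$ is strictly increasing. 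On the other hand, the expansion \eqref{A:I(z=0)} gives $\Delta(z) \sim \frac{(z/2)^{2\nu}}{(\nu+1)\,\Gamma(\nu+1)^2}$ as $z \to 0^+$, the coefficient being positive for every $\nu > -1$, so that $z^2\Delta(z) \to 0^+$. A strictly increasing function with infimum $0$ at the left endpoint is positive on the whole interval, whence $\Delta > 0$ and \eqref{christophe} follows.

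The step I expect to be delicate is the derivation of the ODE: it is the bookkeeping with the five-term neighbourhood $\{\nu-2,\dotsc,\nu+2\}$ that must collapse, after the substitutions, to the compact right-hand side $\frac{2}{z} I_{\nu-1} I_{\nu+1}$; everything on either side of it is routine. Two points merit care. First, the argument must stay independent of Lemma~\ref{cserge1}: the computation behind \eqref{derivee de g} shows that $G_\nu'(z)$ is $-z$ times the left-hand side of \eqref{christophe}, so the two assertions are equivalent and the monotonicity of $G_\nu$ cannot be invoked here. Second, the virtue of reading positivity off the sign of $2z\,I_\nu^2$ together with the vanishing at $0$ is that it covers all $\nu > -1$ uniformly, including the awkward range $-1 < \nu < 0$ where $I_{\nu-1}$ itself changes sign and any direct sign chase would be fragile.
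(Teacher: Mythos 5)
Your proof is correct, but it takes a genuinely different route from the paper's at the decisive point. Both arguments perform the same reduction: since $I_\nu(z)>0$ for $\nu>-1$, multiplying \eqref{christophe} by $I_\nu^2(z)$ and invoking the recurrence \eqref{A:1I} shows that \eqref{christophe} is equivalent to the Tur\'an-type inequality $I_{\nu-1}(z)I_{\nu+1}(z)<I_\nu^2(z)$, and the passage to \eqref{extLaforgia} is the same elementary quadratic argument in both cases. The difference is what happens next: the paper simply \emph{quotes} the Tur\'an inequality from the literature (it is \eqref{turan1}, taken from \cite{Baricz:10}, see also \cite{Laforgia-Natalini:10}), whereas you prove it from scratch. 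Your computation checks out: with $\Delta:=I_\nu^2-I_{\nu-1}I_{\nu+1}$, the identity $I_\mu'=\tfrac12(I_{\mu-1}+I_{\mu+1})$ together with \eqref{A:1I} (applied at orders $\nu-1$ and $\nu+1$ to eliminate $I_{\nu-2}$ and $I_{\nu+2}$) gives $\Delta'=\tfrac{2}{z}I_{\nu-1}I_{\nu+1}$, hence $\Delta'+\tfrac{2}{z}\Delta=\tfrac{2}{z}I_\nu^2$ and $\bigl(z^2\Delta\bigr)'=2zI_\nu^2>0$; combined with $z^2\Delta(z)\to 0$ as $z\to 0^+$, this forces $\Delta>0$ on $\intervaloo{0,+\infty}$. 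What your version buys is self-containedness (only the standard recurrences and the expansion \eqref{A:I(z=0)} are used) and uniform coverage of all $\nu>-1$; what the paper's version buys is brevity, at the price of an external citation. Your remark that Lemma~\ref{cserge1} must not be invoked is also well taken: the paper proves Lemma~\ref{cserge1} \emph{using} the present lemma, so that route would be circular. One small point to patch: when $\nu=0$ the expansion \eqref{A:I(z=0)} does not literally apply to $I_{\nu-1}=I_{-1}$ (negative integer order is excluded), so your asymptotic formula for $\Delta$ needs the separate observation $I_{-1}=I_1$, giving $I_{-1}I_1=I_1^2=O(z^2)$ and $\Delta(z)\to 1$; in any case your argument only needs $z^2\Delta(z)\to 0$ as $z\to 0^+$, which holds for every $\nu>-1$.
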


\begin{proof}
  We start with the Tur\'an type inequality \cite{Baricz:10} (see also
  \cites{Lorch:94,Ismail-Muldoon:78, Laforgia-Natalini:10})
  \begin{equation}\label{turan1}
    \forall z > 0,\qquad
    I_{\nu-1}(z) I_{\nu+1}(z) < I_\nu^2(z),
  \end{equation}
  valid for all $\nu>-1$.  Then using the recurrence
  relation \eqref{A:1I} this can be rewritten under the form
  \begin{equation*}
    \Bigl(\frac{2\nu }{z}  I_\nu(z)+ I_{\nu+1}(z)\Bigr)  I_{\nu+1}(z)
    < I_\nu^2(z).    
  \end{equation*}
  Setting $u := \frac{ I_{\nu+1}(z)}{ I_{\nu}(z)}$, the previous
  inequality is equivalent to \eqref{christophe}.  The estimate
  \eqref{extLaforgia} directly follows as $z>0$.
\end{proof}

\begin{remark}
  Note that the estimate \eqref{extLaforgia} follows from
  \cite[estimate (14)]{Baricz-Ponnusamy:13} that says that for
  $\nu>-1$
  \begin{equation*}
    \forall z>0, \qquad
    \frac{I_{\nu+1}(z)}{I_{\nu}(z)}
    < \frac{-\nu-1+\sqrt{\nu^2+\frac{\nu+1}{\nu} z^2}}{
      \frac{\nu+1}{\nu} z},
  \end{equation*}
  since the right-hand side of this estimate is smaller than the
  right-hand side of~\eqref{extLaforgia}.  Actually our proof uses the
  estimate~\eqref{turan1}, while the previous estimate uses the
  converse Tur\'an type inequality
  \begin{equation*}
    \forall z > 0,\qquad
    I_\nu^2(z) - I_{\nu-1}(z) I_{\nu+1}(z)
    \le \frac{I_\nu^2(z)}{\nu +1},    
  \end{equation*}
  valid for all $\nu>-1$.
\end{remark}

\bibliography{DNT2}

\end{document}